\documentclass[leqno]{article}

\usepackage[frenchb,english]{babel}
\usepackage[utf8]{inputenc}
\usepackage{amsmath}
\usepackage{amssymb}
\usepackage{amsfonts}
\usepackage{enumerate}
\usepackage{vmargin}
\usepackage[all]{xy}
\usepackage{mathrsfs} 
\usepackage{mathtools}
\usepackage{lmodern}
\usepackage{slashed}
\usepackage[colorlinks=true,linkcolor=blue,pagebackref=true]{hyperref}%
\setmarginsrb{3cm}{3cm}{3.5cm}{3cm}{0cm}{0cm}{1.5cm}{3cm}



\footskip1.3cm

\newcommand{\C}{\mathrm{C}}

\newcommand{\E}{\ensuremath{\mathbb{E}}}
\newcommand{\F}{\mathcal{F}}
\newcommand{\N}{\ensuremath{\mathbb{N}}}
\newcommand{\B}{\mathrm{B}} 


\let\L\relax 
\newcommand{\L}{\mathrm{L}}
\newcommand{\J}{\mathcal{J}} 

\newcommand{\SG}{\mathrm{SG}} 

\newcommand{\scr}{\mathscr}

\newcommand{\M}{\mathrm{M}}
\newcommand{\CB}{\mathrm{CB}}
\newcommand{\RCD}{\mathrm{RCD}}
\newcommand{\dist}{\mathrm{d}}
\newcommand{\meas}{\mu}

\let\div\relax
\newcommand{\div}{\mathrm{div}}
\let\cal\relax
\newcommand{\cal}{\mathcal}
\newcommand{\Z}{\ensuremath{\mathbb{Z}}}
\newcommand{\R}{\ensuremath{\mathbb{R}}}
\newcommand{\T}{\ensuremath{\mathbb{T}}}

\newcommand{\W}{\mathrm{W}}


\newcommand{\Id}{\mathrm{Id}}
\newcommand{\VN}{\mathrm{VN}}

\newcommand{\la}{\langle}
\newcommand{\ra}{\rangle}

\newcommand{\CD}{\mathrm{CD}}


\renewcommand{\leq}{\ensuremath{\leqslant}}
\renewcommand{\geq}{\ensuremath{\geqslant}}
\newcommand{\qed}{\hfill \vrule height6pt  width6pt depth0pt}
\newcommand{\bnorm}[1]{ \big\| #1  \big\|}

\newcommand{\norm}[1]{\left\Vert#1\right\Vert}

\newcommand{\xra}{\xrightarrow}
\newcommand{\co}{\colon}

\newcommand{\ot}{\otimes}
\newcommand{\ovl}{\overline}

\newcommand{\otvn}{\overline{\otimes}} 


\newcommand{\cb}{\mathrm{cb}}

\newcommand{\Ch}{\mathrm{Ch}}

\newcommand{\op}{\mathrm{op}} 
\let\i\relax 
\newcommand{\i}{\mathrm{i}}

\newcommand{\ov}{\overset}


\newcommand{\epsi}{\varepsilon}
\renewcommand{\d}{\mathop{}\mathopen{}\mathrm{d}} 
\newcommand{\e}{\mathrm{e}} 
\renewcommand{\d}{\mathop{}\mathopen{}\mathrm{d}}


\DeclareMathOperator{\Span}{span} 
\DeclareMathOperator{\Lip}{\mathrm{Lip}} 
\DeclareMathOperator{\lip}{\mathrm{lip}} 
\DeclareMathOperator{\supp}{supp} 
\DeclareMathOperator{\Fix}{Fix} 
\DeclareMathOperator{\tr}{Tr} 
\let\ker\relax 
\DeclareMathOperator{\ker}{Ker} 
\DeclareMathOperator{\Ran}{Ran} 
\DeclareMathOperator{\dom}{dom} 
\let\Re\relax 
\DeclareMathOperator{\Re}{Re} 
\DeclareMathOperator{\card}{card} 

\DeclareMathOperator{\diam}{diam}
\selectlanguage{english}
\newtheorem{thm}{Theorem}[section]
\newtheorem{defi}[thm]{Definition}
\newtheorem{prop}[thm]{Proposition}
\newtheorem{conj}[thm]{Conjecture}

\newtheorem{cor}[thm]{Corollary}
\newtheorem{lemma}[thm]{Lemma}

\newtheorem{remark}[thm]{Remark}
\newtheorem{example}[thm]{Example}
\newtheorem{prob}[thm]{Problem}

\newenvironment{proof}[1][]{\noindent {\it Proof #1} : }{\hbox{~}\qed
\smallskip
}

\usepackage{tocloft}
\setlength{\cftbeforesecskip}{0pt}
		
\numberwithin{equation}{section}
\usepackage[nottoc,notlot,notlof]{tocbibind}

\let\OLDthebibliography\thebibliography
\renewcommand\thebibliography[1]{
  \OLDthebibliography{#1}
  \setlength{\parskip}{0pt}
  \setlength{\itemsep}{0pt plus 0.3ex}
}


\newcommand\reallywidehat[1]{\arraycolsep=0pt\relax%
\begin{array}{c}
\stretchto{
  \scaleto{
    \scalerel*[\widthof{\ensuremath{#1}}]{\kern-.5pt\bigwedge\kern-.5pt}
    {\rule[-\textheight/2]{1ex}{\textheight}} 
  }{\textheight} %
}{0.5ex}\\           
#1\\                 
\rule{-1ex}{0ex}
\end{array}
}

\begin{document}
\selectlanguage{english}
\title{\bfseries{Spectral triples, Coulhon-Varopoulos dimension and heat kernel estimates}}
\date{}
\author{\bfseries{C\'edric Arhancet}}
\maketitle


\begin{abstract}
We investigate the relations between the (completely bounded) local Coulhon-Varopoulos dimension and the spectral dimension of spectral triples associated to sub-Markovian semigroups (or Dirichlet forms) acting on classical (or noncommutative) $\L^p$-spaces associated to finite measure spaces. More precisely, we prove that the completely bounded local Coulhon-Varopoulos dimension $d$ exceeds the spectral dimension, i.e.~that the associated Hodge-Dirac operator is $d^+$-summable. We explore different settings to compare these two values: compact Riemannian manifolds, compact Lie groups, sublaplacians, metric measure spaces, noncommutative tori and quantum groups. Specifically, we prove that, while very often equal in smooth compact settings, these dimensions can diverge. Finally, we show that the existence of a symmetric sub-Markovian semigroup on a von Neumann algebra with finite completely bounded local Coulhon-Varopoulos dimension implies that the von Neumann algebra is necessarily injective.



%
%
%
%
%
%
%
%
%
%
%
%
%
%
%
%
%
\end{abstract}




\makeatletter
 \renewcommand{\@makefntext}[1]{#1}
 \makeatother
 \footnotetext{
 2020 {\it Mathematics subject classification:}
 46L51, 47D03, 46L07, 58B34. 
\\
{\it Key words}: noncommutative $\L^p$-spaces, semigroups of operators, noncommutative geometry, spectral triples, Coulhon-Varopoulos dimension, spectral dimension, heat kernel estimates.}

{
  \hypersetup{linkcolor=blue}
 \tableofcontents
}


\section{Introduction}
\label{sec:Introduction}
Symmetric sub-Markovian semigroups of operators on $\L^p$-spaces of a measure space $\Omega$ is a classical topic of analysis. Cipriani and Sauvageot proved in \cite{CiS03} that the $\L^2$-generator $-A_2$ of such semigroup $(T_t)_{t \geq 0}$ acting on the Hilbert space $\L^2(\Omega)$ can be written $A_2=\partial^*\partial$ where $\partial$ is a (unbounded) closed derivation defined on a dense subspace of $\L^2(\Omega)$ with values in a Hilbert $\L^\infty(\Omega)$-bimodule $\cal{H}$. Here $T_t=\e^{-tA_2}$ for any $t \geq 0$. The map $\partial$ can be seen as an analogue of the gradient operator $\nabla$ of a smooth Riemannian manifold $M$, which is a closed operator defined on a subspace of $\L^2(M)$ into the space $\L^2(M,\mathrm{T} M)$ satisfying the relation $-\Delta=\nabla^*\nabla$ where $\Delta$ is the Laplace-Beltrami operator and where $\nabla^*=-\div$.

This fundamental result allows anyone to introduce a triple $(\L^\infty(\Omega),\L^2(\Omega) \oplus_2 \cal{H},D)$ associated to the semigroup in the spirit of noncommutative geometry. Here $D$ is the unbounded selfadjoint operator acting on a dense subspace of the Hilbert space $\L^2(\Omega) \oplus_2 \cal{H}$ defined by 
\begin{equation}
\label{Hodge-Dirac-I}
D
\ov{\mathrm{def}}{=}
\begin{bmatrix} 
0 & \partial^* \\ 
\partial & 0 
\end{bmatrix}.
\end{equation}
In large cases \cite{CGIS14}, \cite{Cip16}, \cite{HKT15}, this triple determines a possibly kernel-degenerate compact spectral triple (say a measurable version) in the sense of noncommutative geometry. Consequently, we can associate to the semigroup a noncommutative geometry. In this framework, a compact spectral triple $(\cal{A},H,D)$ consists of an algebra $\cal{A}$, encoding the space, with its elements acting as bounded operators on a complex Hilbert space $H$, along with a densely defined selfadjoint operator $D$ that also acts on $H$, satisfying some axioms. In particular, the operator $D^{-1}$ must be compact on ($\ker D)^\perp$ and it is often called the <<unit length>> or <<line element>> and denoted by $\d s$. In this context, we can define the spectral dimension of a compact spectral triple $(\cal{A},H,D)$ (or dimension of $D$) by
\begin{equation}
\dim(\cal{A},H,D)  
\ov{\mathrm{def}}{=} \inf\big\{ p > 0 : |D|^{-1} \in S^{p,\infty}(H) \big\}
\end{equation}
where $S^{p,\infty}(H)$ is the weak Schatten space defined in \eqref{weak-Schatten}.

In this paper, we explore the connections between this dimension and what we refer to as the local Coulhon-Varopoulos dimension of the semigroup $(T_t)_{t \geq 0}$. The latter is defined as the infimum of positive real numbers $d$ for which the following inequality holds:\begin{equation}
\label{Varo-Coulhon-dim}
\norm{T_t}_{\L^1(\Omega) \to \L^\infty(\Omega)} 
\lesssim \frac{1}{t^{\frac{d}{2}}}, \quad 0 < t \leq 1.
\end{equation}
In this introduction and later on in the paper, $\lesssim$ stands for an inequality up to a
constant. Note that it is a local property of the same inequality defined for $0 < t <\infty$ and widely studied in the literature, see \cite{CKS87}, \cite{CoM93} and \cite{VSCC92}. The local Coulhon-Varopoulos dimension of the heat semigroup on the $d$-dimensional torus $\T^d$ is equal to $d$. It is worth mentioning that the semigroup is often called ultracontractive if each $T_t$ induces a bounded map from the space $\L^1(\Omega)$ into $\L^\infty(\Omega)$ for any $t > 0$. This terminology originates from the paper \cite{DaS84}.

This inequality admits several different equivalent important forms and variations. If $-A_2$ is the generator of the semigroup on $\L^2(\Omega)$, it is well-known that if $d>2$ the inequality \eqref{Varo-Coulhon-dim} is equivalent to the Sobolev inequality 
\begin{equation}
\label{Sobolev-inequality}
\norm{f}_{\frac{2d}{d-2}}^2 
\lesssim \la A_2 f,f \ra + \norm{f}_{2}^2
\end{equation}
for any suitable function $f$ on $\Omega$. The proof of this equivalence is a classical result of Varopoulos \cite[Theorem p.~259]{Var85}. See also \cite[Proposition p.~486]{Cou90}, \cite[Theorem 6.3.1 p.~286]{BGL14}, \cite[Corollary 2.4.3 p.~77]{Dav89} and finally \cite[Theorem II.4.2 p.~18]{VSCC92} for a more general statement. 
We also refer to \cite[Corollary 2.4.7 p.~79]{Dav89} and \cite[p.~281 and p.~286]{BGL14} for the equivalence with the Nash-type inequality 
$$
\norm{f}_2^{1+\frac{2}{d}} 
\lesssim \big[\la A_2 f,f \ra +\norm{f}_2^2\big]^{\frac{1}{2}} \norm{f}_1^{\frac{2}{d}}.
$$ 
This kind of inequality has its roots in the famous paper \cite{Nas58} of Nash on the regularity of solutions of elliptic and parabolic equations. Finally, We also direct the reader to \cite[Corollaire 2 p.~490]{Cou90} and \cite[Section 7.3.2]{Are04} for some other characterizations.



Indeed, we will examine symmetric sub-Markovian semigroups acting on a finite von Neumann algebra $\cal{M}$. In this generalization, we need to define an extension of the dimension \eqref{Varo-Coulhon-dim} for the noncommutative context what we term completely bounded local Coulhon-Varopoulos dimension. This dimension is defined as the infimum of positive real numbers $d$ such that 
\begin{equation}
\label{Varo-Coulhon-dim-cb}
\norm{T_t}_{\cb,\L^1(\cal{M}) \to \L^\infty(\cal{M})} 
\lesssim \frac{1}{t^{\frac{d}{2}}}, \quad 0 < t \leq 1
\end{equation} 
where we replaced the operator norm by the \textit{completely bounded} norm $\norm{\cdot}_{\cb,\L^1(\cal{M}) \to \L^\infty(\cal{M})}$. Here, we use noncommutative $\L^p$-spaces, see \cite{PiX03} for a survey. Note that \eqref{Varo-Coulhon-dim} and \eqref {Varo-Coulhon-dim-cb} coincide for semigroups acting on classical $\L^p$-spaces associated to finite measure spaces, see \eqref{min-et-cb}. Observe that the inequality \eqref{Varo-Coulhon-dim-cb} is related to the $\lambda$-complete log-Sobolev inequality (in short $\lambda$-CLSI). Indeed, \eqref{Varo-Coulhon-dim-cb} combined with some mild conditions allows everyone to obtain this property in some contexts, see \cite[Theorem 3.10 p.~3432]{GJL20}. Finally, notice that in general there is a difference between the dimension \eqref{Varo-Coulhon-dim-cb} and the inequality 
\begin{equation}
\label{Varo-Coulhon-dim-AVN}
\norm{T_t}_{\L^1(\cal{M}) \to \L^\infty(\cal{M})} \lesssim \frac{1}{t^{\frac{d}{2}}}, \quad 0 < t \leq 1
\end{equation}
for a semigroup acting on a \textit{nonabelian} von Neumann algebra $\cal{M}$, see Section \ref{sec-dimension}. In the noncommutative setting, the inequality \eqref{Varo-Coulhon-dim-AVN} is still equivalent to a Sobolev inequality as in \eqref{Sobolev-inequality} by \cite[Theorem 5.1]{Xio17}. We also direct the reader to \cite[Theorem 1.1.1 p.~619]{JuM10} for a variant where $0 < t <\infty$. Finally, we refer to \cite{GJP17} and \cite{Zha18} for related results and notions. 


We will prove the following result. 

\begin{thm}
\label{th-spectral-coulhon-intro}
Let $\cal{M}$ be a finite injective von Neumann algebra equipped with a normal finite faithful trace. Consider a symmetric sub-Markovian semigroup $(T_t)_{t \geq 0}$ of operators acting on $\cal{M}$. Suppose the estimate $
\norm{T_t}_{\cb,\L^1(\cal{M}) \to \L^\infty(\cal{M})} 
\lesssim \frac{1}{t^{\frac{d}{2}}}$ for some $d > 0$ and any $0 < t \leq 1$. Then the spectral dimension of $D$ is less than $d$. 
\end{thm}

The previous result implies that the local Coulhon-Varopoulos dimension is bigger than the spectral dimension. 
The proof of Theorem \ref{th-spectral-coulhon-intro} relies on some tricks of our previous paper \cite{Arh24} and some news ideas. 

Indeed, in different smooth settings, we will observe that in general  the spectral dimension and the local Coulhon-Varopoulos dimension are identical. However and more intriguingly, we present a case within a more irregular framework where these dimensions are different, see Theorem \ref{th-RCD}. This is our second main result. Delving deeper into this new phenomenon would certainly be instructive.





Finally, our last main result is the following theorem which describes a constraint imposed by the existence of a symmetric sub-Markovian with a finite local Coulhon-Varopoulos dimension.

\begin{thm}
\label{thm-inj}
Let $\cal{M}$ be a finite von Neumann algebra equipped with a normal finite faithful trace. Suppose that the estimate $
\norm{T_t}_{\cb,\L^1(\cal{M}) \to \L^\infty(\cal{M})} \lesssim \frac{1}{t^{\frac{d}{2}}}$  for some $d > 0$ and any $0 < t \leq 1$ holds for some symmetric sub-Markovian semigroup $(T_t)_{t \geq 0}$ of operators acting on $\cal{M}$. Then the von Neumann algebra $\cal{M}$ is injective. 
\end{thm}
In the opposite direction, we present numerous examples of semigroups in the noncommutative setting with finite completely bounded local Coulhon-Varopoulos dimension, see Theorem \ref{thm-NC} and Theorem \ref{th-ame}.



\paragraph{Structure of the paper}
The paper is organized as follows. Section \ref{sec-preliminaries} gives background on operator theory, noncommutative $\L^p$-spaces and noncommutative geometry. In Section \ref{sec-triples}, we lay the groundwork for our first main theorem by introducing the necessary derivations to define the Dirac operator under consideration. By the way, in the $\Gamma$-regular case, we connect the norm of commutators to some amalgamated $\L^p$-spaces in Proposition \ref{Rem-amal}. The aim of Section \ref{sec-cdimension} is to prove our first main result Corollary \ref{th-spectral-coulhon} which to compare the two dimensions. In Section \ref{Sec-Hardy}, we examine our result from the point of view of the (completely bounded) Hardy-Littlewood-Sobolev theory and we give a second proof of Corollary \ref{th-spectral-coulhon}. In Section \ref{sec-examples}, we delve into specific semigroup examples, contrasting the two dimensions. In particular, we will give in Theorem \ref{th-RCD} an example of semigroup on a $\RCD^*(N-1,N)$-space where the spectral dimension and the local Coulhon-Varopoulos dimension are different. In Section \ref{Sec-noncommutative-examples}, we examine examples of semigroups acting on nonabelian von Neumann algebras. In particular, we present new examples of semigroups with finite local completely bounded Coulhon-Varopoulos dimension. We will also explain that the local Coulhon-Varopoulos dimension and its completely bounded analogue are different in the noncommutative context. In Section \ref{sec-dimension}, we show that an estimate \eqref{Varo-Coulhon-dim-cb} implies that the von Neumann where the operators of the semigroup act is injective. Finally, in Section \ref{sec-future}, we examine future investigations and open questions. 

%
%
%

\section{Preliminaries}
\label{sec-preliminaries}

\paragraph{Interpolation theory}
We begin with a review of the foundational concepts in complex interpolation theory. For a more detailed exploration, readers are encouraged to consult the books \cite{BeL76}, \cite{KPS82}, and \cite{Lun18}. Let $X_0,X_1$ be two Banach spaces which embed into a topological vector space space $\tilde{X}$. We say that $(X_0,X_1)$ is an interpolation couple. Then the sum $X_0 + X_1 \ov{\mathrm{def}}{=} \{x \in \tilde{X} : x=x_1+x_2 \text{ for some } x_1 \in X_1,x_2 \in X_2\}$ is well-defined and equipped with the norm
$$
\norm{x}_{X_0+X_1}
\ov{\mathrm{def}}{=} \inf_{x=x_0+x_1} \big\{\norm{x_0}_{X_0}+\norm{x_1}_{X_1}\big\}.
$$
The intersection $X_0 \cap X_1$ is equipped with the norm $
\norm{x}_{X_0 \cap X_1}
\ov{\mathrm{def}}{=} \max\{\norm{x}_{X_0},\norm{x}_{X_1}\}$. 
Consider the closed strip $\ovl{S} \ov{\mathrm{def}}{=} \{z \in \mathbb{C} : 0 \leq \Re z \leq 1\}$. Let us denote by $\mathscr{F}(X_0,X_1)$ the family of bounded continuous functions $f \co \ovl{S} \to X_0+X_1$, holomorphic on the open strip $S \ov{\mathrm{def}}{=} \{z \in \mathbb{C} : 0< \Re z <1\}$ inducing continuous functions $\R\to X_0$, $t \mapsto f(\i t)$ and $\R \to X_1$, $t \mapsto f(1+\i t)$ which tend to 0 when $|t|$ goes to $\infty$. For any function $f \in \mathscr{F}(X_0,X_1)$, we set
\begin{equation}
\label{norm-funct-inter}
\norm{f}_{\mathscr{F}(X_0,X_1)}
\ov{\mathrm{def}}{=} \max \left\{\sup_{t \in \R} \norm{f(\i t)}_{X_0}, \sup_{t \in \R} \norm{f(1+\i t)}_{X_1}\right\}.
\end{equation}
If $0 \leq \theta \leq 1$, we define the subspace $
(X_0,X_1)_\theta
\ov{\mathrm{def}}{=} \big\{f(\theta) : f \in \mathscr{F}(X_0,X_1)\big\}$ of the Banach space $X_0+X_1$. For any $x \in (X_0,X_1)_\theta$, we let
$$
\norm{x}_{(X_0,X_1)_\theta}
\ov{\mathrm{def}}{=} \inf \big\{\norm{f}_{\mathscr{F}(X_0,X_1)} : f \in \mathscr{F}(X_0,X_1), f(\theta)=x\big\}.
$$
Then by \cite[Theorem 4.1.2 p.~98]{BeL76} $(X_0,X_1)_\theta$ equipped with this norm is a Banach space.

For any $0 < \theta < 1$ and any $x \in X_0 \cap X_1$, by \cite[Corollary 2.8 p.~53]{Lun18} we have
\begin{equation}
\label{ine-interpolation}
\norm{x}_{(X_0,X_1)_\theta} 
\leq \norm{x}_{X_0}^{1-\theta} \norm{x}_{X_1}^\theta.
\end{equation}
If $0 < \theta < 1$ and if we have a bounded inclusion $X_0 \subset X_1$ then by essentially \cite[Proposition 2.4 p.~50]{Lun18} we have a bounded inclusion
\begin{equation}
\label{inclusions-interpolation}
X_0
\subset (X_0,X_1)_\theta.
\end{equation}

\paragraph{Operator space theory} We refer to the books \cite{BLM04}, \cite{EfR00}, \cite{Pau02} and \cite{Pis03} for background on operator spaces and completely bounded maps. If $E$ and $F$ are operator spaces, we denote by $\CB(E,F)$ the space of all completely bounded maps endowed with the norm
\begin{equation}
\label{norm-cb8}
\norm{T}_{\cb, E \to F}
\ov{\mathrm{def}}{=} \sup_{n \geq 1} \bnorm{\Id_{\M_n} \ot T}_{\M_n(E) \to \M_n(F)}.
\end{equation}
It is well-known that by \cite[Corollary 1.2 p.~19]{Pis98} a linear map $T \co E \to F$ is completely bounded if and only if it induces a bounded map $\Id_{S^p} \ot T \co S^p[E] \to S^p[E]$ where $S^p[E]$ is the vector-valued Schatten space of \cite[p.~18]{Pis98}. In this case, we have
\begin{equation}
\label{norm-cb-Sp}
\norm{T}_{\cb, E \to F}
=\norm{\Id_{S^p} \ot T}_{\cb,S^p[E] \to S^p[F]}.
\end{equation}
If $E$ is an operator space and if $Y$ is a Banach space, by \cite[(3.2) p.~71]{Pis03} \cite[(1.10) p.~10]{BLM04} each bounded linear operator $T \co E \to Y$ satisfies
\begin{equation}
\label{min-et-cb}
\norm{T}_{\cb,E \to \min(Y)}
=\norm{T}_{E \to Y}
\end{equation}
where $\min(Y)$ denotes the minimal operator space structure on the Banach space $Y$. Recall that the canonical operator space structure on the commutative algebra $\L^\infty(\Omega)$ of a $\sigma$-finite measure space $\Omega$ is the minimal one, see \cite[p.~72]{Pis03}. For an operator space $E$, note that its opposite operator space $E^\op$ is defined by the same space $E$ but by the norms $\norm{\cdot}_{\M_n(E^\op)}$ defined by $\norm{[x_{ij}]}_{\M_n(E^\op)} \ov{\mathrm{def}}{=} \norm{[x_{ji}]}_{\M_n(E)}$ for any matrix $[x_{ij}]$ of $\M_n(E)$. It is known that if $\cal{A}$ is a $\mathrm{C}^*$-algebra, then these matrix norms on $\cal{A}^\op$ coincide with the canonical matrix norms on the <<opposite>> $\mathrm{C}^*$-algebra which is $\cal{A}$ endowed with its reversed multiplication. 

We also recall a useful description of the operator space structure of an operator space $E$. Suppose $1 \leq p \leq \infty$. If $n \geq 1$ is an integer and if the matrix $x=[x_{ij}]$ belongs to $\M_n(E)$, we have by \cite[Lemma 1.7 p.~23]{Pis98} the equality
\begin{equation}
\label{ope-space-struct}
\norm{x}_{\M_n(E)}
=\sup_{\norm{a}_{S^{2p}_n} ,\norm{b}_{S^{2p}_n} \leq 1} \norm{a\cdot x \cdot b}_{S^{p}_n[E]}
\end{equation}
where the products are defined by $[a \cdot x]_{ij} \ov{\mathrm{def}}{=} \sum_{k=1}^n a_{ik}x_{kj}$ and $[x \cdot b]_{ij} \ov{\mathrm{def}}{=} \sum_{k=1}^n b_{kj}x_{ik}$ for any $1 \leq i,j \leq n$. Here $S^{p}_n[E]$ is the vector-valued Schatten space of \cite[p.~18]{Pis03}. 




A dual operator space $E$ is said to have the dual slice mapping property \cite[p.~48]{BLM04} if 
\begin{equation}
\label{normal-minimal}
\CB(F_*,E)
= E \otvn F
\end{equation}
holds for any dual operator space $F$, where $E \otvn F$ is the normal spatial tensor product \cite[p.~134]{EfR00}, i.e.~the weak* closure of $E \ot_{\min} F$ in the dual space $\CB(F_*,E)$. By \cite[Theorem 11.2.5 p.~201]{EfR00}, this is equivalent to $E_*$ possessing the <<operator space approximation property>>. We refer to \cite{Rua92}, \cite{EKR93}, \cite{EfR03} and \cite{BLM04} for more information on $\otvn$.

\paragraph{Noncommutative $\L^p$-spaces} For a detailed understanding of the theory of von Neumann algebras, we direct readers to the books \cite{Bla06}, \cite{KaR97a} and \cite{KaR97b} and to the survey \cite{PiX03} and references therein for information on noncommutative $\L^p$-spaces. Let $\cal{M}$ be a finite von Neumann algebra equipped with a normal finite faithful trace $\tau$. For any $1 \leq p < \infty$, the noncommutative $\L^p$-space $\L^p(\cal{M})$ is defined as the completion of the von Neumann algebra $\cal{M}$ for the norm
\begin{equation}
\label{norm-Lp}
\norm{x}_{\L^p(\cal{M})}
\ov{\mathrm{def}}{=} \tau(|x|^p)^{\frac{1}{p}}, \quad x \in \cal{M}
\end{equation}
where $|x| \ov{\mathrm{def}}{=} (x^*x)^{^{\frac{1}{2}}}$. We sometimes write $\L^\infty(\cal{M}) \ov{\mathrm{def}}{=} \cal{M}$. The trace $\tau$ induces a canonical continuous embedding $j \co \cal{M} \to \cal{M}_*$  of the von Neumann algebra $\cal{M}$ into its predual $\cal{M}_*$ defined by $\la j(x), y \ra_{\cal{M}_*,\cal{M}} = \tau(xy)$ for any $x,y \in \cal{M}$. Moreover, the previous map $j\co \cal{M} \to \cal{M}_*$ extends to a canonical isometry $j_1 \co \L^1(\cal{M})\to \cal{M}_*$ which allows us to identify the Banach space $\L^1(\cal{M})$ with the predual $\cal{M}_*$ of the von Neumann algebra $\cal{M}$.

With the embedding $j \co \cal{M} \to \cal{M}_*$, $(\cal{M},\cal{M}_*)$ is a compatible couple of Banach spaces in the context of complex interpolation theory. For any $1 \leq p \leq \infty$, we have by \cite[p.~139]{Pis03} the isometric interpolation formula
\begin{equation}
\label{Lp-interpolation}
\L^p(\cal{M})
= (\cal{M},\cal{M}_*)_{\frac{1}{p}}.
\end{equation}
There exists a canonical operator space structure on each noncommutative $\L^p$-space $\L^p(\cal{M})$, that can be equally introduced using complex interpolation theory. Recall that this structure is defined by
\begin{equation*}
\label{}
\M_n(\L^p(\cal{M}))
\ov{\mathrm{def}}{=} (\M_n(\cal{M}),\M_n(\cal{M}_*^\op))_{\frac{1}{p}}, \quad n \geq 1
\end{equation*}
where we use the opposed operator space structure on the predual $\cal{M}_*$. In the second proof of Corollary \ref{th-spectral-coulhon}, we will use the following quite elementary observation. 

\begin{prop}
\label{Prop-inclusion-cb}
Let $\cal{M}$ be a finite von Neumann algebra equipped with a normal finite faithful trace $\tau$. Suppose $1 \leq p \leq \infty$. The inclusion $\L^p(\cal{M}) \subset \L^1(\cal{M})$ is completely bounded with completely bounded norm less than $\tau(1)^{1-\frac{1}{p}}$.
\end{prop}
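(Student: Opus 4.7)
The plan is to bound the norm of the inclusion at each matrix level by a constant independent of $n$. For $x = [x_{ij}] \in \M_n(\cal{M})$, I would apply the formula \eqref{ope-space-struct} with $E = \L^1(\cal{M})$ and $p = 1$, which gives
$$\norm{x}_{\M_n(\L^1(\cal{M}))} = \sup_{\norm{a}_{S^2_n}, \norm{b}_{S^2_n} \leq 1} \norm{a \cdot x \cdot b}_{S^1_n(\L^1(\cal{M}))}.$$

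Next, I would invoke the standard Fubini-type identification from Pisier's theory of vector-valued noncommutative $\L^p$-spaces, namely $S^1_n[\L^1(\cal{M})] = \L^1(\M_n \otvn \cal{M})$ isometrically, where the right-hand side carries the product trace $\tr \ot \tau$. Under this identification, the action $(a, x, b) \mapsto a \cdot x \cdot b$ corresponds to the product $(a \ot 1)\, x\, (b \ot 1)$ inside $\M_n \otvn \cal{M}$.

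Applying the noncommutative Hölder inequality with exponents $(2, \infty, 2)$ then yields
$$\norm{(a \ot 1)\,x\,(b \ot 1)}_{\L^1(\M_n \otvn \cal{M})} \leq \norm{a \ot 1}_{\L^2}\,\norm{x}_{\L^\infty(\M_n \otvn \cal{M})}\,\norm{b \ot 1}_{\L^2}.$$
Since $\norm{a \ot 1}_{\L^2(\M_n \otvn \cal{M})}^2 = \tau(1)\,\tr(a^*a) = \tau(1)\,\norm{a}_{S^2_n}^2$ (and similarly for $b$), and $\L^\infty(\M_n \otvn \cal{M})$ coincides isometrically with $\M_n(\cal{M})$, I would conclude
$$\norm{x}_{\M_n(\L^1(\cal{M}))} \leq \tau(1)\,\norm{x}_{\M_n(\cal{M})}.$$
Since this bound is independent of $n$, the inclusion is completely bounded with cb-norm at most $\tau(1)$.

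The only non-routine ingredient is the isometric identification $S^1_n[\L^1(\cal{M})] = \L^1(\M_n \otvn \cal{M})$, which is standard in the theory; once it is invoked, the rest is essentially a one-line Hölder estimate, and the finiteness of $\tau$ enters solely through the factor $\tau(1)$.
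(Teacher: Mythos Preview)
Your proof is correct and follows essentially the same approach as the paper: both use the formula \eqref{ope-space-struct} with $p=1$, the isometric identification $S^1_n(\L^1(\cal{M}))=\L^1(\M_n \otvn \cal{M})$, and the noncommutative H\"older inequality with exponents $(2,\infty,2)$ to obtain the bound $\tau(1)$. The only difference is the order of presentation---the paper performs the H\"older estimate first and then invokes the supremum formula, whereas you start from the supremum formula; the content is identical.
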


\begin{proof}
Let $x \in \M_n(\cal{M})$. If $a,b \in S^2_n$ satisfy $\norm{a}_{S^2_n} \leq 1$ and $\norm{b}_{S^2_n} \leq 1$, we have using H\"older's inequality
\begin{align*}
\MoveEqLeft
\norm{(a \ot 1) x (b \ot 1)}_{\L^1(\M_n \otvn \cal{M})}         
\leq \norm{a \ot 1}_{\L^2(\M_n(\cal{M}))} \norm{x}_{\M_n(\L^\infty(\cal{M}))}\norm{b \ot 1}_{\L^2(\M_n(\cal{M}))} \\
&=\big[(\tr \ot \tau)(a^*a \ot 1)\big]^{\frac{1}{2}} \norm{x}_{\M_n(\L^\infty(\cal{M}))} \big[(\tr \ot \tau)(b^*b \ot 1)\big]^{\frac{1}{2}} \\
&=\big[\tr a^*a\big]^{\frac{1}{2}} \norm{x}_{\M_n(\L^\infty(\cal{M}))} \big[\tr b^*b\big]^{\frac{1}{2}} \tau(1) \\
&=\norm{a}_{S^2_n} \norm{x}_{\M_n(\L^\infty(\cal{M}))} \norm{b}_{S^2_n} \tau(1) 
\leq \tau(1) \norm{x}_{\M_n(\L^\infty(\cal{M}))}.
\end{align*}
Recall that we have $\L^1(\M_n \otvn \cal{M})=S^1_n[\L^1(\cal{M})]$ isometrically. Taking the supremum, we obtain with the case $p=1$ of \eqref{ope-space-struct}
$$
\norm{x}_{\M_n(\L^1(\cal{M}))}
\leq \tau(1) \norm{x}_{\M_n(\L^\infty(\cal{M}))}.
$$
We conclude with an argument of interpolation.
\end{proof}

\paragraph{Vector-valued noncommutative $\L^p$-spaces} 
Pisier pioneered the study of vector-valued noncommutative $\L^p$-spaces, focusing on the case where the underlying von Neumann algebra $\cal{M}$ is approximately finite-dimensional and comes with a normal semifinite faithful trace, as outlined in \cite{Pis98}. Within this framework, Pisier demonstrated that for any given operator space $E$, the Banach spaces $\L^\infty_0(\cal{M},E) \ov{\mathrm{def}}{=}\cal{M} \ot_{\min} E$ and $\L^1(\cal{M},E) \ov{\mathrm{def}}{=} \cal{M}_*^{\op} \widehat{\ot} E$ can be injected into a common topological vector space, as detailed in \cite[pp.~37-38]{Pis98}.  This result, relying crucially on the approximate finite-dimensionality of the von Neumann algebra $\cal{M}$, allows everyone to use complex interpolation theory in order to define vector-valued noncommutative $\L^p$-spaces by letting $
\L^p(\cal{M},E)
\ov{\mathrm{def}}{=} (\L^\infty_0(\cal{M},E),\L^1(\cal{M},E))_{\frac{1}{p}}$ for any $1 \leq p \leq \infty$. Actually, if $E$ is a \textit{dual} operator space, we can replace the space $\L^\infty_0(\cal{M},E)$ by the space 
\begin{equation}
\label{def-Lp-vec-1}
\L^\infty(\cal{M},E) \ov{\mathrm{def}}{=} \cal{M} \otvn E
\end{equation}
by the same argument that the one of \cite[p.~39]{Pis98} and we have
\begin{equation}
\label{Lp-vect}
\L^p(\cal{M},E)
= (\L^\infty(\cal{M},E),\L^1(\cal{M},E))_{\frac{1}{p}}.
\end{equation}

\paragraph{Operator theory} We also need basic results in operator theory that we now recall. The following well-known result is \cite[Corollary 5.6 p.~144]{Tha92}.

\begin{thm}
\label{Th-unit-eq}
Let $T$ be a closed densely defined operator on a Hilbert space $H$. Then the operator $T^*T$ on $(\ker T)^\perp$ is unitarily equivalent to the operator $TT^*$ on $(\ker T^*)^\perp$.
\end{thm}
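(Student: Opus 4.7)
The plan is to invoke the polar decomposition of the closed densely defined operator $T$ and read off the unitary equivalence directly. Write $T = V|T|$, where $|T| \ov{\mathrm{def}}{=} (T^*T)^{1/2}$ is the non-negative self-adjoint absolute value and $V$ is the partial isometry with initial space $\overline{\Ran |T|}$ and final space $\overline{\Ran T}$. The basic identity $\ker |T| = \ker T^*T = \ker T$ (since $\||T|x\|^2 = \langle T^*Tx,x\rangle = \|Tx\|^2$ for $x \in \dom T^*T$) together with the orthogonal decomposition $H = \ker T \oplus \overline{\Ran T^*}$ shows that the initial space of $V$ is precisely $(\ker T)^\perp$. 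Dually, the final space of $V$ is $\overline{\Ran T} = (\ker T^*)^\perp$. Hence the restriction $U \ov{\mathrm{def}}{=} V|_{(\ker T)^\perp}$ is a unitary from $(\ker T)^\perp$ onto $(\ker T^*)^\perp$, and this is the candidate implementing the unitary equivalence.

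Next, I would exploit the companion polar identity for the adjoint: taking adjoints in $T = V|T|$ gives $T^* = |T|V^*$, and the polar decomposition of $T^*$ reads $T^* = V^*|T^*|$ with $|T^*| = (TT^*)^{1/2}$. Comparing these two expressions yields $|T^*| = V|T|V^*$, which, squared, gives the key algebraic identity
\begin{equation*}
TT^* = V \, T^*T \, V^*
\end{equation*}
to be interpreted at the level of unbounded operators. By the Borel functional calculus, this identity propagates to all spectral projections: $\mathbf{1}_E(TT^*) = V \mathbf{1}_E(T^*T) V^*$ for every Borel set $E \subseteq [0,\infty)$. In particular the spectral projection onto $\{0\}$ matches the orthogonal projection onto the kernels, so the spectral measures of $T^*T$ restricted to $(\ker T)^\perp$ and of $TT^*$ restricted to $(\ker T^*)^\perp$ are intertwined by $U$.

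Finally, I would verify that $(\ker T)^\perp$ is invariant under $T^*T$ (its orthogonal complement is the $0$-eigenspace, hence invariant), so that $T^*T|_{(\ker T)^\perp}$ makes sense as a self-adjoint operator on $(\ker T)^\perp$; likewise for $TT^*|_{(\ker T^*)^\perp}$. The intertwining of spectral projections by $U$ then upgrades to the required operator identity $U (T^*T|_{(\ker T)^\perp}) U^{-1} = TT^*|_{(\ker T^*)^\perp}$, with matching domains. The only delicate point of the argument, and the step I would treat most carefully, is precisely this domain matching: one must check that $x \in \dom(T^*T|_{(\ker T)^\perp})$ if and only if $Ux \in \dom(TT^*|_{(\ker T^*)^\perp})$, which is automatic once the spectral intertwining is established but requires the Borel functional calculus rather than a purely algebraic manipulation of unbounded operators.
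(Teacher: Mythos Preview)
The paper does not supply its own proof of this theorem; it is quoted verbatim as \cite[Corollary 5.6 p.~144]{Tha92} and used as a black box later in the proof of Proposition~\ref{Prop-trace-class}. Your argument via the polar decomposition $T = V|T|$ is correct and is in fact the standard route to this result (and is essentially how Thaller proves it): the partial isometry $V$ restricts to a unitary $U \co (\ker T)^\perp \to (\ker T^*)^\perp$, and the identity $|T^*| = V|T|V^*$ yields the intertwining of $T^*T$ and $TT^*$ on the respective orthocomplements. Your caution about domains is well placed, and passing through the spectral projections via the Borel functional calculus is the clean way to handle it; nothing is missing.
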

If $T$ is a densely defined operator acting on a Banach space $Y$ then by \cite[Problem 5.27 p.~168]{Kat76} we have
\begin{equation}
\label{lien-ker-image}
\ker T^*
=(\Ran T)^\perp.
\end{equation}
If in addition $T$ is closed and $Y$ is a Hilbert space, we will also have by \cite[Exercise 2.8.45 p.~171]{KaR97a} the following classical equalities 
\begin{equation}
\label{Kadison-image}
\Ran{T^*T}
=\Ran{T^*}
\quad \text{and} \quad 
\ker T^*T
=\ker T. 
\end{equation}
If $A$ is a sectorial operator acting on a \textit{reflexive} Banach space $Y$, we have by \cite[Proposition 2.1.1 (h) p.~20]{Haa06} a decomposition
\begin{equation}
\label{decompo-reflexive}
Y
=\ker A \oplus \ovl{\Ran A}.
\end{equation}


\paragraph{Semigroup theory} Let $(T_t)_{t \geq 0}$ be a strongly continuous semigroup of bounded linear operators acting on a Banach space $Y$ with infinitesimal generator $-A$. This means that $T_t=\e^{-tA}$ for any $t \geq 0$. Recall that $A$ is sectorial, see \cite[p.~24]{Haa06}. 
Furthermore, by \cite[Lemma 1.3 p.~50]{EnN00}, if $x \in \dom A$ and $t \geq 0$, then $T_t(x)$ belongs to $\dom A$ and
\begin{equation}
\label{A-et-Tt-commute}
T_tA(x)
=AT_t(x), \quad t \geq 0.
\end{equation}
Introducing the fixed-point subspace
\begin{equation}
\label{def-fix}
\Fix (T_{t})_{t \geq 0}
\ov{\mathrm{def}}{=} \{ x \in Y : T_t(x)=x \text{ for any } t \geq 0 \}
\end{equation}
of the semigroup $(T_{t})_{t \geq 0}$, we have by \cite[Corollary 3.8 p.~278]{EnN00} the equality $\Fix (T_{t})_{t \geq 0}=\ker A$. We also use the notation \eqref{def-fix} for a weak* continuous semigroup acting on a dual Banach space.

We say that a strongly continuous semigroup $(T_{t})_{t \geq 0}$ is bounded if there exists a positive constant $M$ such that $\norm{T_t}_{Y \to Y} \leq M$ for any $t \geq 0$. The following result is \cite[Corollary 4.11 p.~344]{EnN06}.

\begin{prop}
\label{Prop-mean}
Let $(T_t)_{t \geq 0}$ be a bounded strongly continuous semigroup with generator $-A$ on a Banach space $Y$ such that the operator $-A$ has compact resolvent. Then $(T_t)_{t \geq 0}$ is uniformly mean ergodic and the mean ergodic projection $P \co Y \to Y$ has finite rank.
\end{prop}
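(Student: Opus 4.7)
The plan is to deduce this from standard spectral theory of $C_0$-semigroups, treating the two hypotheses in parallel but reducing both to the same criterion: that $0$ is either in the resolvent set of $A$ or is an isolated pole of finite algebraic multiplicity of the resolvent $R(\cdot,A)$. Under this criterion, the Riesz spectral projection $P$ associated with the spectral set $\{0\}$ has finite rank, one obtains the topological direct sum $X=\ker A\oplus \ovl{\Ran A}$ with finite-dimensional first summand, and on $\ovl{\Ran A}$ the generator $A$ is invertible, so the identity $\frac{1}{t}\int_0^t T_s\,\d s \,A^{-1} = -\frac{1}{t}(T_t-I)A^{-1}$ together with boundedness of $(T_t)_{t\geq 0}$ yields that the Cesàro means tend to $0$ in operator norm on $\ovl{\Ran A}$. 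Combining with $\frac{1}{t}\int_0^t T_s\,\d s = P$ on $\ker A$, the full Cesàro means converge to $P$ uniformly, which is precisely uniform ergodicity with mean ergodic projection $P$ of finite rank.

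First I would treat the compact-resolvent case, which is essentially immediate. If $R(\lambda_0,A)$ is compact for some $\lambda_0\in\rho(A)$, then $R(\lambda,A)$ is compact for all $\lambda\in\rho(A)$ and, by the Riesz--Schauder theory, $\sigma(A)$ consists of at most countably many isolated eigenvalues, each of finite algebraic multiplicity, with no finite accumulation point. In particular either $0\in\rho(A)$ or $0$ is an isolated pole of the resolvent of finite rank, which is the criterion above.

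Next I would handle the quasi-compact case by transferring quasi-compactness of $T_{t_0}$ into the same spectral information for $A$. By the Yosida--Kakutani decomposition, a power $T_{t_0}^m$ splits as a finite-rank spectral part (corresponding to the peripheral spectrum of $T_{t_0}$, which consists of finitely many eigenvalues of finite multiplicity each being a pole of finite order of the resolvent of $T_{t_0}$) plus an operator of spectral radius strictly less than $\rho_{\mathrm{ess}}(T_{t_0})<1$. Since $(T_t)_{t\geq 0}$ is bounded, the spectral values of modulus $1$ of $T_{t_0}$ correspond, via the spectral mapping theorem for the point spectrum of $C_0$-semigroups, to eigenvalues of $A$ on the imaginary axis; in particular, $1\in\sigma(T_{t_0})$ iff $0\in\sigma(A)$, and then $1$ is a pole of the resolvent of $T_{t_0}$ of finite rank iff $0$ is a pole of the resolvent of $A$ of finite rank. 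This gives the required structure for $A$ at the origin.

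The main obstacle I expect is the quasi-compact case: specifically, translating the spectral/pole structure of the discrete-time operator $T_{t_0}$ into the corresponding statement for the generator $A$ at $0$, without losing the finite-rank conclusion. This requires spectral mapping for the point spectrum together with care about the order of poles, because in the $C_0$-semigroup setting the residual spectrum behaves less symmetrically than for bounded operators. Once this translation is done, the ergodicity statement follows from the resolvent identity $\frac{1}{t}\int_0^t T_s\,\d s \,A^{-1}=-\frac{1}{t}(T_t-I)A^{-1}$ applied on the complementary summand $\ovl{\Ran A}$, with the uniform-norm convergence coming for free from the boundedness of the semigroup and the invertibility of $A$ restricted to that subspace.
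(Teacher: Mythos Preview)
The paper does not prove this proposition; it is simply cited as \cite[Corollary 4.11 p.~344]{EnN06} in the preliminaries section. Your sketch is therefore not a comparison against a proof in the paper, but an attempt to reprove a textbook result.

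That said, your outline is essentially the standard argument underlying the cited result. A few remarks. In the compact-resolvent case your reduction is correct. In both cases, you implicitly use that the pole of $R(\cdot,A)$ at $0$ is \emph{simple}; this is where boundedness of the semigroup enters (on the Riesz range $PX$ the restricted generator is nilpotent, so $T_t|_{PX}$ is polynomial in $t$, hence bounded only if the nilpotency index is $1$). You should make this step explicit, since otherwise the decomposition $X=\ker A\oplus\ovl{\Ran A}$ need not hold. In the quasi-compact case, the cleanest route (and the one in Engel--Nagel) is not to transfer pole structure from $T_{t_0}$ to $A$ directly via spectral mapping, but to observe that quasi-compactness of some $T_{t_0}$ forces the essential growth bound $\omega_{\mathrm{ess}}$ to be negative; since the semigroup is bounded, the part of $\sigma(A)$ in $\{\Re\lambda\geq 0\}$ then consists of finitely many poles of finite algebraic multiplicity, and in particular $0$ satisfies your criterion. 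Your proposed translation via spectral mapping for the point spectrum is workable but more delicate, as you yourself flag.
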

Recall that this bounded projection satisfies $\Ran P=\Fix (T_{t})_{t \geq 0}$. 

\paragraph{Symmetric sub-Markovian semigoups}  
Let $\cal{M}$ be a finite von Neumann algebra equipped with a normal finite faithful trace $\tau$. A bounded contraction $T \co \cal{M} \to \cal{M}$ is said to be selfadjoint \cite[Section 5]{JMX06} if $\tau(T(x)y^*)=\tau(xT(y)^*)$ for any $x, y \in \cal{M}$. Such an operator induces a bounded operator $T \co \L^p(\cal{M}) \to \L^p(\cal{M})$ for any $1 \leq p <\infty$, which is is selfadjoint if $p=2$.

A symmetric sub-Markovian semigoup $(T_{t})_{t \geq 0}$ is a weak* continuous semigroup of weak* continuous completely positive contractive selfadjoint operators acting on $\cal{M}$.  
In particular, we have $\tau(T_t(x))=\tau(x^{\frac{1}{2}}T_t(1)x^{\frac{1}{2}}) \leq \tau(x)$ using \cite[1.6.8 and 1.6.9]{Dix77} for any \textit{positive} element $x \in \cal{M}$. Such a semigroup extends to a strongly continuous semigroup of contractions on the Banach space $\L^p(\cal{M})$ for any $1 \leq p < \infty$ and $(T_t)_{t \geq 0}$ is a semigroup of selfadjoint operators on the complex Hilbert space $\L^{2}(\cal{M})$. We refer to \cite[Section 5]{JMX06} for more information where the terminology <<completely positive diffusion semigroup>> is used. We warn the reader that different definitions coexist in the literature. We say that a sub-Markovian semigoup $(T_{t})_{t \geq 0}$ is Markovian if in addition each operator $T_t$ is unital, i.e.~$T_t(1)=1$.

By \cite[Theorem 2.4]{KuN79}, such a semigroup is weak* mean ergodic and the corresponding projection onto the weak* closed fixed-point subalgebra $\{ x \in \cal{M} : T_t(x)=x \text{ for any } t \geq 0 \}$ is a conditional expectation $\E \co \cal{M} \to \cal{M}$. By \cite[Theorem 1.2]{KuN79}, it also satisfies the equalities
\begin{equation}
\label{E-Tt}
T_t\E=\E T_t=\E, \quad t \geq 0.
\end{equation}
Since $\E$ belongs to the closed convex hull of the set $\{T_t : t \geq 0\}$ in the point weak* topology, a classical argument using complex interpolation shows that the map $\E \co \L^\infty(\cal{M}) \to \L^\infty(\cal{M})$ admits a contractive $\L^p$-extension $\E_p \co \L^p(\cal{M}) \to \L^p(\cal{M})$ for any $1 \leq p \leq \infty$. We will use the classical notation 
\begin{equation}
\label{Lp0}
\L_0^p(\cal{M})
\ov{\mathrm{def}}{=} \ker \E_p.
\end{equation} 
If $-A_p$ is the infinitesimal generator of the strongly continuous semigroup $(T_{t})_{t \geq 0}$ on the Banach space $\L^p(\cal{M})$ then we have $\L_0^p(\cal{M})=\ovl{\Ran A_p}$ and the decomposition $\L^p(\cal{M})=\L_0^p(\cal{M}) \oplus \ker A_p$ where $1 \leq p<\infty$. In other words, the subspace $\L^p_0(\cal{M})$ is contractively complemented in the Banach space $\L^p(\cal{M})$ and the associated contractive projection $\E_p \co \L^p(\cal{M}) \to \L^p(\cal{M})$ is induced by interpolation by two compatible contractive projections $\E \co \L^\infty(\cal{M}) \to \L^\infty(\cal{M})$ and $\E_1 \co \L^1(\cal{M}) \to \L^1(\cal{M})$. By the way, using \cite[Theorem 1 p.~118]{Tri95} or \cite[p.~58]{Pis03}, we have the isometric formula $\L_0^p(\cal{M})=(\L_0^\infty(\cal{M}),\L_0^1(\cal{M}))_{\frac{1}{p}}$ from \eqref{Lp-interpolation}. 

Finally, for $1 \leq p \leq q \leq \infty$ and some constant $d>0$ consider the property
\begin{equation}
\label{Rnpq}
\norm{T_t}_{\L^p(\cal{M}) \to \L^q(\cal{M})}
\lesssim \frac{1}{t^{\frac{d}{2}(\frac{1}{p}-\frac{1}{q})}}, \quad 0 < t \leq 1.
\end{equation}
By a classical interpolation argument of Coulhon and Raynaud \cite[Lemma 1]{Cou90},  generalized to the noncommutative case, this property holds for one pair $1 \leq p < q \leq \infty$ if and only if it holds for all $1 \leq p \leq q \leq \infty$. For the following estimate
\begin{equation}
\label{cbRnpq}
\norm{T_t}_{\cb,\L^p(\cal{M}) \to \L^q(\cal{M})}
\lesssim \frac{1}{t^{\frac{d}{2}(\frac{1}{p}-\frac{1}{q})}}, \quad 0 < t \leq 1,
\end{equation}
we can easily prove the next result in the same spirit.

\begin{lemma}
\label{Lemma-interpolation}
The estimate \eqref{cbRnpq} is true for $p=1$ and $q=\infty$ if and only if \eqref{cbRnpq}  also holds true for $1 \leq p < q \leq \infty$. 
\end{lemma}

\begin{proof}
By interpolation, we obtain using the assumption \eqref{cbRnpq} for $p=1$ and $q=\infty$ in the last inequality
\begin{align}
\MoveEqLeft
\label{div-877}
\norm{T_t}_{\cb,\L^p(\cal{M}) \to \L^\infty(\cal{M})}
\leq \norm{T_t}_{\cb,\L^\infty(\cal{M}) \to \L^\infty(\cal{M})}^{1-\frac{1}{p}} \norm{T_t}_{\cb,\L^1(\cal{M}) \to \L^\infty(\cal{M})}^{\frac{1}{p}} 
\lesssim \frac{1}{t^{\frac{d}{2p}}}.
\end{align}
Note that $\L^q(\cal{M})=(\L^\infty(\cal{M}),\L^p(\cal{M}))_{\frac{p}{q}}$ isometrically by the reiteration theorem \cite[Theorem 4.6.1 p.~101]{BeL76} and by the interpolation formula \eqref{Lp-interpolation}. Using again interpolation, we conclude that
\begin{align*}
\MoveEqLeft
\norm{T_t}_{\cb,\L^p(\cal{M}) \to \L^q(\cal{M})}
\leq \norm{T_t}_{\cb,\L^p(\cal{M}) \to \L^\infty(\cal{M})}^{1-\frac{p}{q}} \norm{T_t}_{\cb,\L^p(\cal{M}) \to \L^p(\cal{M})}^{\frac{p}{q}} 
\ov{\eqref{div-877}}{\lesssim} \bigg(\frac{1}{t^{\frac{d}{2p}}}\bigg)^{1-\frac{p}{q}}
=\frac{1}{t^{\frac{d}{2}(\frac{1}{p}-\frac{1}{q})}}.
\end{align*}


Now, we prove the converse. We can write $\frac{1}{p}=\frac{\theta}{1}+\frac{1-\theta}{q}$ for some $0 < \theta < 1$. We have
$
\frac{d}{2}(\frac{1}{p}-\frac{1}{q})
=\theta\frac{d}{2}(1-\frac{1}{q})$. 
Consider an element  $x \in \M_n(\L^1(\cal{M}))$ with $ \norm{x}_{\M_n(\L^1(\cal{M}))} \leq 1$. We let
\begin{equation}
\label{def-Cx}
C_x
\ov{\mathrm{def}}{=} \sup_{0 \leq t \leq 1} t^{\frac{d}{2}(1-\frac{1}{q})} \norm{(\Id \ot T_t)(x)}_{\M_n(\L^q(\cal{M}))}.
\end{equation}
For any $0 < t \leq 1$, using the complete boundedness of $T_{\frac{t}{2}}$ on the space $\L^1(\cal{M})$, we obtain
\begin{align*}
\MoveEqLeft
\norm{(\Id \ot T_t)(x)}_{\M_n(\L^q(\cal{M}))}         
=\bnorm{(\Id \ot T_{\frac{t}{2}})(\Id \ot T_{\frac{t}{2}})(x)}_{\M_n(\L^q(\cal{M}))} \\
&\leq \bnorm{\Id \ot T_{\frac{t}{2}}}_{\M_n(\L^p(\cal{M})) \to \M_n(\L^q(\cal{M}))}
\bnorm{(\Id \ot T_{\frac{t}{2}})(x)}_{\M_n(\L^p(\cal{M}))} \\
&\ov{\eqref{norm-cb8}\eqref{cbRnpq}\eqref{ine-interpolation}}{\lesssim} \frac{1}{t^{\frac{d}{2}(\frac{1}{p}-\frac{1}{q})}} \bnorm{(\Id \ot T_{\frac{t}{2}})(x)}_{\M_n(\L^1(\cal{M}))}^\theta \bnorm{(\Id \ot T_{\frac{t}{2}})(x)}_{\M_n(\L^q(\cal{M}))}^{1-\theta} \\
&\leq \frac{1}{t^{\frac{d}{2}(\frac{1}{p}-\frac{1}{q})}} \bnorm{(\Id \ot T_{\frac{t}{2}})(x)}_{\M_n(\L^q(\cal{M}))}^{1-\theta}\\
&\ov{\eqref{def-Cx}}{\leq}\frac{1}{t^{\frac{d}{2}(\frac{1}{p}-\frac{1}{q})}} \frac{1}{t^{(1-\theta)\frac{d}{2}(1-\frac{1}{q})}} C_x^{1-\theta}
= \frac{1}{t^{\frac{d}{2}(1-\frac{1}{q})}} C_x^{1-\theta}.
\end{align*}
Consequently, we have  $C_x \lesssim C_x^{1-\theta}$. Hence $C_x \leq M$ for some constant $M$ which is independent of $x$. So, we obtain the estimate $\norm{T_t}_{\cb,\L^1(\cal{M}) \to \L^q(\cal{M})} \lesssim \frac{1}{t^{\frac{d}{2}(1-\frac{1}{q})}}$ for any $0 < t \leq 1$. By duality, we deduce that for any $0 < t \leq 1$
$$
\norm{T_t}_{\cb,\L^{q^*}(\cal{M}) \to \L^\infty(\cal{M})} 
\lesssim \frac{1}{t^{\frac{d}{2}(1-\frac{1}{q})}}
=\frac{1}{t^{\frac{d}{2}(\frac{1}{q^*})}}
=\frac{1}{t^{\frac{d}{2}(\frac{1}{q^*}-\frac{1}{\infty})}}.
$$
Applying the first part with this estimate, we obtain $\norm{T_t}_{\cb,\L^1(\cal{M}) \to \L^\infty(\cal{M})} \lesssim \frac{1}{t^{\frac{d}{2}}}$ for any $0 < t \leq 1$.
\end{proof}

The following result is a slight generalization of \cite[Lemma 3.1 p.~3426]{GJL20} (see also \cite[Proposition 5.3 p.~346]{StS11} for the commutative case and \cite[Proposition 7.2 (i) p.~127]{Pis03}) which can proved with the same method. Easy details are left to the reader.

\begin{prop}
\label{prop-Junge-square}
Consider a $*$-preserving selfadjoint operator $T \co \cal{M} \to \cal{M}$ acting on a finite von Neumann algebra $\cal{M}$ equipped with a normal finite faithful trace. Then $T$ induces a completely bounded map from $\L^2(\cal{M})$ into $\L^\infty(\cal{M})$ if and only if $T^2$ induces a completely bounded map from $\L^{1}(\cal{M})$ into $\L^\infty(\cal{M})$. In this case, we have
\begin{equation}
\label{formule-square}
\norm{T}_{\cb,\L^2(\cal{M}) \to \L^\infty(\cal{M})}^2
=\norm{T^2}_{\cb,\L^{1}(\cal{M}) \to \L^\infty(\cal{M})}.
\end{equation}
A similar result is true for bounded maps instead of completely bounded maps. 
\end{prop}

We need the following part of \cite[Proposition 3.2 p.~3427]{GJL20} which gives an exponential decay on some completely bounded norm for large times. 

\begin{prop}
\label{prop-exp}
Let $\cal{M}$ be a finite von Neumann algebra equipped with a normal finite faithful trace.  Consider a semigroup $(T_t)_{t \geq 0}$ of $*$-preserving selfadjoint maps acting on $\cal{M}$ satisfying \eqref{cbRnpq} such that the operator $A_2 \co \dom A_2 \subset \L^2(\cal{M}) \to \L^2(\cal{M})$ has a spectral gap\footnote{\thefootnote.This means that there exists $\lambda >0$ such that $\lambda \norm{f}_{\L^2(\cal{M})}^2 \leq \la A_2 f,f \ra_{\L^2(\cal{M})}$.}.
Then there exists $\omega >0$ such that
\begin{equation}
\label{estimate-exp}
\norm{T_t}_{\cb,\L_0^1(\cal{M}) \to \L^\infty_0(\cal{M})} 
\lesssim \e^{-\omega t}, \quad t \geq 1.
\end{equation}
\end{prop}

\paragraph{Weak Schatten ideals}
Let $H$ be a Hilbert space. If $0 < p < \infty$, recall that the weak Schatten space is defined by 
\begin{equation}
\label{weak-Schatten}
S^{p,\infty}(H) 
\ov{\mathrm{def}}{=} \Big\{T \in S^\infty(H) : s_n(T)=O_{n \to \infty}\Big(\tfrac{1}{n^{\frac{1}{p}}}\Big) \Big\}.
\end{equation} 
Here $s_1(T),s_2(T),\ldots$ denotes the singular values of the compact operator $T \co H \to H$. These are the eigenvalues of the operator $|T| \ov{\mathrm{def}}{=} (T^*T)^{\frac{1}{2}}$ arranged in descending order. By \cite[Exercise p.~316]{GVF01}, we have the inclusions
\begin{equation}
\label{inclusions-weak-Schatten}
S^{p}(H)  
\subset S^{p,\infty}(H) 
\subset S^{\alpha}(H), \quad \alpha > p.
\end{equation}

\paragraph{Possibly kernel-degenerate compact spectral triples}
Consider a triple $(\cal{A},H,D)$ constituted of the following data: a Hilbert space $H$, a closed unbounded operator $D$ on $H$ with dense domain $\dom D \subset H$, an algebra $\cal{A}$ equipped with a homomorphism $\pi \co \cal{A} \to \B(H)$. In this case, we define the Lipschitz algebra
\begin{align}
\label{Lipschitz-algebra-def}
\MoveEqLeft
\Lip_D(\cal{A}) 
\ov{\mathrm{def}}{=} \big\{a \in \cal{A} : \pi(a) \cdot \dom D \subset \dom D 
\text{ and the unbounded operator } \\
&\qquad \qquad  [D,\pi(a)] \co \dom D \subset H \to H \text{ extends to an element of } \B(H) \big\}. \nonumber
\end{align} 
By \cite[Proposition 5.11 p.~219]{ArK22}, this is a subalgebra of the algebra $\cal{A}$. We will use the standard notation $[D,a]$ for the commutator $[D,\pi(a)]$ for any $a \in \cal{A}$.

Now, we introduce the following definition. We essentially follow \cite[Definition 2.1]{CGIS14} and \cite[Definition 5.10 p.~218]{ArK22} but at the level of von Neumann algebras since we want to do analysis on classical or noncommutative $\L^p$-spaces and these Banach spaces are defined with von Neumann algebras. 

\begin{defi}
\label{def-spec}
We say that $(\cal{A},H,D)$ is a possibly kernel-degenerate compact spectral triple if $H$ is a Hilbert space, $\cal{A}$ is a von Neumann algebra, $\pi$ is a $*$-homomorphism, $D$ is a selfadjoint operator on $H$ satisfying  
\begin{enumerate}
\item{} $D^{-1}$ is a compact operator on $\ovl{\Ran D} \ov{\eqref{lien-ker-image}}{=} (\ker D)^\perp$,
\item{} the subset $\Lip_D(\cal{A})$ is weak* dense in $\cal{A}$.
\end{enumerate}
\end{defi}
To shorten the terminology, we would just use the words <<compact spectral triple>>. Moreover, we can replace $D^{-1}$ by $|D|^{-1}$ in the first point by an elementary functional calculus argument, see \cite[Proposition 5.11 p.~219]{ArK22}.

With this definition, the kernel $\ker D$ can be infinite-dimensional. Note that in the classical  definition \cite[Definition 1.1]{EcI18} \cite[Definition 9.16 p.~400]{GVF01} of spectral triples, the first condition is replaced by the condition that the operator $D$ has compact resolvent and consequently has a finite-dimensional kernel by \cite[Theorem 6.29 p.~187]{Kat76}. Indeed, if an operator $D$ has a finite-dimensional kernel, it satisfies the first condition if and only if it has compact resolvent. So, we can see this definition as an extension of the classical one. This explains the terminology <<possibly kernel-degenerate>>.





\paragraph{Spectral dimension}
Let $D$ be a selfadjoint unbounded operator on a Hilbert space $H$. By \cite[Proposition 5.3.38]{Ped89}, we have $\ker |D|=\ker D$. Moreover, the operator $|D|^{-1}$ is well-defined on $\ovl{\Ran D} \ov{\eqref{lien-ker-image}}{=} (\ker D)^\perp$. Furthermore, we can extend it by letting $|D|^{-1}=0$ on $\ker D$. We now introduce a concept of summability as defined below, a notion we previously defined in our earlier paper \cite{Arh24}. Note that the definition of the spectral dimension of \cite{Arh24} is different and we thank one of the referees for advising us to use the following definition (in the spirit of \cite[Definition 7.1.4 p.~392]{LMSZ23}) instead.

\begin{defi}
\label{defi-summable}
We say that a compact spectral triple $(\cal{A},H,D)$ is $p$-summable for some $p > 0$ if $\tr |D|^{-p} < \infty$,  that is if the operator $|D|^{-1}$ belongs to the Schatten class $S^p(H)$. The triple is said to be $p^+$-summable if $|D|^{-1}$ belongs to the weak Schatten space $S^{p,\infty}(H)$. In this case, the spectral dimension of the spectral triple is defined by
\begin{equation}
\label{Def-spectral-dimension}
\dim(\cal{A},H,D)  
\ov{\mathrm{def}}{=} \inf\big\{ p > 0 : |D|^{-1} \in S^{p,\infty}(H) \big\}.
\end{equation}
\end{defi}

If $p > 0$, note that by \eqref{inclusions-weak-Schatten} a $p^+$-summable compact spectral triple is $\alpha$-summable for any $\alpha > p$. Consider a <<classical>> spectral triple $(\cal{A},H,D)$ as in \cite[Definition 1.1 p.~1]{EcI18} whose projection on the finite-dimensional subspace $\ker D$ is $P_0 \co H \to H$. The classical definition \cite[p.~4]{EcI18} of $p$-summability is $\tr|D+P_0|^{-p} <\infty$ and is clearly equivalent to $|D|^{-p}$ being trace-class on the complement of its kernel. Hence the first part of Definition \ref{defi-summable} can be seen as an extension of the standard definition \cite[Definition 1.1 p.~1]{EcI18} (see also \cite[p.~450]{GVF01} and \cite[p.~38 and Definition 6.2 p.~47]{CPR11}). 

Note that by \cite[p.~489]{GVF01} the <<commutative>> compact spectral triple associated to a compact Riemannian spin manifold of dimension $d$ admits a spectral dimension equal to $d$. It is worth noting that <<finite>> compact spectral triples as defined in \cite[Definition 2.19]{Van15} are 0-dimensional and that a spectral triple $(\cal{A},H,D)$ with an infinite-dimensional Hilbert space $H$ can also have a spectral dimension equal to 0. Standard examples are associated with Podle\'s spheres and we refer to \cite[p.~631]{EIS14} and \cite[Proposition B.9]{EcI18} for more information.



The following useful lemma is \cite[Lemma 6.2]{Arh24}. 

\begin{lemma} 
\label{Lemma-theta-summable}
Let $D$ be a selfadjoint unbounded operator on a Hilbert space $H$. If the operator $|D|^{-p}$ is trace-class for some $p > 0$ then for any $t>0$ the operator $\e^{-t D^2}$ is also trace-class and we have
\begin{equation}
\label{ul-esti2}
\tr \e^{-tD^2}
\lesssim \frac{1}{t^{\frac{p}{2}}}, \quad t>0.
\end{equation}
\end{lemma}

\paragraph{Dirichlet forms on noncommutative $\L^2$-spaces} Here, we recall some definitions concerning Dirichlet forms. These forms are in a one-to-one correspondence  with symmetric sub-Markovian semigroups and generalize the Dirichlet
energy integral $\cal{E}(f)=\int_\Omega |\nabla f|^2 \d x$ for a bounded open subset $\Omega$ of $\R^d$. A key feature of this last energy functional is that its value does not increase when the function $f$ is replaced by $f \wedge 1 \ov{\mathrm{def}}{=} \inf\{f,1\}$, i.e.~we have $\cal{E}(f \wedge 1) \leq \cal{E}(f)$.

 We refer to \cite{Cip97}, \cite{Cip08}, \cite{Cip16}, and \cite{CiS03} and references therein for background on Dirichlet forms in the noncommutative context. 
Let $\cal{M}$ be a von Neumann algebra equipped with a normal finite faithful trace $\tau$. Consider a quadratic form $\cal{E} \co \dom \cal{E} \to [0,\infty[$ defined on a dense selfadjoint subspace $\dom \cal{E}$ of the Hilbert space $\L^2(\cal{M})$. We say that $\cal{E}$ is real if $\cal{E}(f^*) = \cal{E}(f)$ for all $f \in \dom \cal{E}$. The form $\cal{E}$ is said to be Markovian if for all selfadjoint element $f$ of $\dom \cal{E}$ the element $f \wedge 1$ belongs to $\dom \cal{E}$ and if $\cal{E}(f \wedge 1) \leq \cal{E}(f)$. For each integer $n \geq 1$, we can define the amplification $\cal{E}_n \co S^2_n \ot \dom \cal{E} \to [0,\infty[$ of $\cal{E}$ by
$$
\cal{E}_n([f_{ij}]) 
\ov{\mathrm{def}}{=} \sum_{i,j=1}^n \cal{E}(f_{ij})
$$ 
where $[f_{ij}]$ belongs to the space $S^2_n \ot \dom \cal{E}$. We say that $\cal{E}$ is completely Markovian if $\cal{E}_n$ is Markovian for all integer $n \geq 1$. A completely Markovian real densely defined quadratic form $\cal{E}$ is said to be a completely Dirichlet form if it is lower semicontinuous on $\L^2(\cal{M})$. According to \cite[Theorem 3.5]{Cip16} (see \cite[Proposition 3.2.1 p.~12]{BoH91} for the commutative case), completely bounded Dirichlet forms correspond one-to-one, via a generalized Beurling–Deny correspondence, with symmetric sub-Markovian semigroups of operators acting on $\cal{M}$. If $-A_2$ is the generator of a symmetric sub-Markovian semigroup $(\e^{-tA_2})_{t \geq 0}$, we can recover the completely Dirichlet form $\cal{E}$ by letting $\dom \cal{E} \ov{\mathrm{def}}{=} \dom A_2^{\frac{1}{2}}$ and
\begin{equation}
\label{Beurling-Deny}
\cal{E}(f)
\ov{\mathrm{def}}{=} \bnorm{A_2^{\frac{1}{2}}(f)}_{\L^2(\cal{M})}^2, 
\quad f \in \dom A_2^{\frac{1}{2}}.
\end{equation}




\paragraph{Karamata's Theorem} We recall the well-known Karamata Tauberian theorem \cite{Kar31}, see \cite[Sec.~XIII.5, Theorem 2 p.~445]{Fel71}. 
For a lower bounded sequence $(\lambda_n)_{n \geq 1}$ of real numbers, this classical result connects the large asymptotics of the counting function $N(\lambda) \ov{\mathrm{def}}{=} \card\{n \geq 1 : \lambda_n \leq \lambda \}$ to the small time asymptotics $t$ of the sum $\sum_{n=1}^{\infty} \e^{-\lambda_n t}$, which identifies to the trace $\tr(\e^{-tA})$ of a positive operator in our setting. The last part is not included in \cite[Sec.~XIII.5, Theorem 2 p.~445]{Fel71} but is folklore.

\begin{thm}[Karamata]
\label{thm-Karamata}
Let $(\lambda_n)_{n \geq 1}$ be a lower bounded sequence of real numbers such that the series $\sum_{n \geq 1} \e^{-\lambda_n t}$ converges for all $t > 0$. Let $0 < p < \infty$ and $a > 0$. Then the following conditions are equivalent.
\begin{enumerate}
\item $\lim_{t \to 0} t^p \sum_{n=1}^{\infty} \e^{-\lambda_n t} = a$.
\item $\lim_{\lambda \to \infty}  \lambda^{-p} N(\lambda) = \frac{a}{\Gamma(p + 1)}$.
\item We have 
$$
\lim_{n \to \infty} \frac{\lambda_n}{n^{\frac{1}{p}}}  
= \bigg(\frac{\Gamma(p + 1)}{a}\bigg)^{\frac{1}{p}}.
$$
\end{enumerate}
\end{thm}
The proof of the equivalence of the two last parts is elementary since $N(\lambda_n)=n$ for any integer $n \geq 1$. We refer to \cite{AHT18}, \cite{MeS07} and \cite{IoZ23} for one-sided weaker variants of this result.

The following result is folklore. The recent paper \cite{IoZ23} contains related results and essentially the second part of the proof. 

\begin{prop}
\label{th-Karamata-Iochum}
Let $A$ be a positive selfadjoint unbounded operator with compact resolvent acting on a Hilbert space. We denote by $(\lambda_n(A))_{n \geq 1}$ the increasing sequence of its eigenvalues, which is assumed to be infinite. For any $0 < p <\infty$, the following conditions are equivalent.
\begin{enumerate}
	\item $(\e^{-tA})_{t \geq 0}$ is a Gibbs semigroup and $\norm{\e^{-tA}}_{S^1(H)}=O_{t \to 0}(t^{-p})$.
	\item $A \not=0$ and $\lambda_n(A)^{-1}=O_{n \to \infty}(n^{-\frac{1}{p}})$.
\end{enumerate}
\end{prop}

\begin{proof}
We let $\lambda_n \ov{\mathrm{def}}{=} \lambda_n(A)$. If the first property is satisfied, we follow a <<classical argument>> e.g.~\cite[proof of Theorem 1.7]{Wu23}. Consider some integer $n$. For any $k \in \{1,\ldots,n\}$ and any $t>0$ we have $\e^{-t\lambda_n} \leq \e^{-t\lambda_k}$. By summing, we obtain 
$$
n\e^{-t\lambda_n} 
\leq \sum_{k=1}^{n} \e^{-t \lambda_k} 
\leq \sum_{k=1}^{\infty} \e^{-t\lambda_k}
=\tr T_t \lesssim \frac{1}{t^p}.
$$ 
A simple computation shows that the function $t \mapsto \frac{\e^{t\lambda_n}}{t^p}$ takes its minimum at $t=\frac{p}{\lambda_n}$. Hence $
n
\leq \frac{\e^{p}}{(\frac{p}{\lambda_n})^{p}}$. The conclusion is obvious.

Suppose the second assertion. We can assume that each $\lambda_n$ is non-zero. We have $0< \lambda_n^{-1} \leq cn^{-\frac{1}{p}}$ for some positive constant $c$. Using an integral test, we obtain
$$
\norm{\e^{-tA}}_{S^1(H)}
=\sum_{n =1}^\infty \e^{-t\lambda_n} 
\leq \sum_{n = 1}^\infty \e^{-tc^{-1}n^{\frac{1}{p}}} 
\leq \int_{0}^{\infty} \e^{-tc^{-1}x^{\frac{1}{p}}} \d x
\lesssim \frac{1}{t^p}.
$$
\end{proof}

\paragraph{Conditional $\L^p$-spaces}
Suppose that $1 \leq q \leq p \leq \infty$ such that $\frac{1}{2}=\frac{1}{q}-\frac{1}{p}$. Consider an inclusion $\cal{N} \subset \cal{M}$ of von Neumann algebras with $\cal{M}$ equipped with a normal finite faithful trace. If $x \in \cal{M}$, we consider the asymmetric norm
\begin{equation}
\label{conditional-one-sided}
\norm{x}_{\L^{q}_{p,\ell}(\cal{N} \subset \cal{M})}
=\sup_{\norm{b}_{\L^{2}(\cal{N})} = 1} \norm{x b}_{\L^{q}(\cal{M})}.
\end{equation}
We employ the subscript $\ell$ to denote <<left>>. The <<right>> version of this norm is denoted by $\norm{\cdot}_{\L^{p}_{r,\infty}(\cal{N} \subset \cal{M})}$ in the paper \cite[p.~26]{GJL20b} for some suitable $r$.  The Banach space $\L^{q}_{p,\ell}(\cal{N} \subset \cal{M})$ is then the completion of $\cal{M}$ with respect to this norm. If the von Neumann algebra $\cal{M}$ is abelian, it is not difficult to prove that this norm is equal to
\begin{equation}
\label{mixed-norm}
\norm{x}_{\L_p^q(\cal{N} \subset \cal{M})}
\ov{\mathrm{def}}{=} \sup_{\norm{a}_{\L^{4}(\cal{N})} = 1,\norm{b}_{\L^{4}(\cal{N})} = 1} \norm{axb}_{\L^{q}(\cal{M})}. 
\end{equation}
If $\cal{R}$ and $\cal{N}$ are finite von Neumann algebras equipped with normal finite faithful traces, we have by \cite[Example 4.1 (b) p.~71]{JuP10} an isometry
\begin{equation}
\label{mixed-particular}
\L_p^q(\cal{N} \subset \cal{N} \otvn \cal{R}) 
= \L^p(\cal{N},\L^q(\cal{R})).
\end{equation}
We refer to \cite{GJL20b} and \cite{JuP10} for more information on these spaces. It should be noted that our notation differs from that of \cite{JuP10}, but it is the one commonly used today.

\section{Derivations and triples}
\label{sec-triples}


\paragraph{Hilbert bimodules} 
We start by reviewing the concept of Hilbert bimodule which is crucial for defining the derivations that allow the introduction of the Hodge-Dirac operators considered in this paper. Let $\cal{M}$ be a von Neumann algebra. A Hilbert $\cal{M}$-bimodule is a Hilbert space $\cal{H}$ together with a $*$-representation $\Phi \co \cal{M} \to \B(\cal{H})$ and a $*$-anti-representation $\Psi \co \cal{M} \to \B(\cal{H})$ such that $\Phi(x)\Psi(y)=\Psi(y)\Phi(x)$ for any $x,y \in \cal{M}$. For all $x, y \in \cal{M}$ and any $\xi \in \cal{H}$, we let $x\xi y \ov{\mathrm{def}}{=} \Phi(x)\Psi(y)\xi$.  We say that the bimodule is normal if $\Phi$ and $\Psi$ are normal, i.e.~weak* continuous. The bimodule is said to be symmetric if there exists an antilinear involution $\J \co \cal{H} \to \cal{H}$ such that $\J (x\xi y) = y^*\J (\xi)x^*$ for any $x, y \in \cal{M}$ and any $\xi \in \cal{H}$.


\paragraph{$\mathrm{W}^*$-derivations} Now, we introduce a notion of derivation that can be viewed as an abstract version of a gradient. If $(\cal{H},\Phi,\Psi)$ is a Hilbert $\cal{M}$-bimodule, then following \cite[p.~267]{Wea96} we define a $\mathrm{W}^*$-derivation to be a weak* closed densely defined unbounded operator $\partial \co \dom \partial \subset \cal{M} \to \cal{H}$ such that the domain $\dom \partial$ is a weak* dense unital $*$-subalgebra of $\cal{M}$ and 
\begin{equation}
\label{Leibniz}
\partial(fg) 
= f\partial(g) + \partial(f)g, \quad f, g \in \dom \partial.
\end{equation}
We say that a $\mathrm{W}^*$-derivation is symmetric if the bimodule $(\cal{H},\Phi,\Psi)$ is symmetric and if we have $\J(\partial(f)) = \partial(f^*)$ for any $x \in \dom \partial$. In the sequel, we let $\cal{B} \ov{\mathrm{def}}{=} \dom \partial$.

\paragraph{The triple $(\L^\infty(\cal{M}),\L^2(\cal{M}) \oplus_2 \cal{H},D)$}
From a derivation, we will now explain how to introduce a triple in the spirit of noncommutative geometry. Let $\partial \co \dom \partial \subset \cal{M} \to \cal{H}$ be a $\mathrm{W}^*$-derivation where the von Neumann algebra $\cal{M}$ is equipped with a normal faithful finite trace $\tau$. Suppose that the operator $\partial \co \dom \partial \subset \L^2(\cal{M}) \to \cal{H}$ is closable. We denote again its closure by $\partial$. Note that the subspace $\mathcal{B}$ is a core of $\partial$. Recall that it is folklore and well-known that a weak* dense subalgebra of $\cal{M}$ is dense in the space $\L^2(\cal{M})$. As the operator $\partial$ is densely defined and closed, by \cite[Theorem 5.29 p.~168]{Kat76} the adjoint operator $\partial^*$ is densely defined and closed on $\L^2(\cal{M})$ and $\partial^{**}=\partial$. 

Following essentially \cite{HiT13b} and \cite{Cip16}, we introduce the unbounded closed operator $D$ on the Hilbert space $\L^2(\cal{M}) \oplus_2 \cal{H}$ introduced in \eqref{Hodge-Dirac-I} and defined by
\begin{equation}
\label{Def-D-psi}
D(f,g)
\ov{\mathrm{def}}{=}
\big(\partial^*(g),
\partial(f)\big), \quad f \in \dom \partial,\  g \in \dom \partial^*.
\end{equation}
We call it the Hodge-Dirac operator associated to $\partial$ and it can be written as in \eqref{Hodge-Dirac-I}. It is not difficult to check that this operator is selfadjoint. If $f \in \L^\infty(\cal{M})$, we define the bounded operator $\pi_f \co \L^2(\cal{M}) \oplus_2 \cal{H} \to \L^2(\cal{M}) \oplus_2 \cal{H}$ by
\begin{equation}
\label{Def-pi-a}
\pi_f
\ov{\mathrm{def}}{=} \begin{bmatrix}
    \M_f & 0  \\
    0 & \Phi_f  \\
\end{bmatrix}, \quad f \in \L^\infty(\cal{M})
\end{equation}
where the linear map $\M_f \co \L^2(\cal{M}) \to \L^2(\cal{M})$, $g \mapsto fg$ is the multiplication operator by $f$ and where $\Phi_f \co \cal{H} \to \cal{H}$, $h \mapsto fh$ is the left bimodule action. 

Note that completely Dirichlet forms give rise to $\mathrm{W}^*$-derivations, see \cite{CiS03}, \cite{Cip97}, \cite{Cip08}, \cite{Cip16} and \cite{Wir22} and references therein. More precisely, if $\cal{E}$ is a completely Dirichlet form on a noncommutative $\L^2$-space $\L^2(\cal{M})$ with associated semigroup $(T_t)_{t \geq 0}$ and associated operator $A_2$, there exist a symmetric Hilbert $\cal{M}$-bimodule $(\cal{H},\Phi,\Psi,\J)$ and a $\mathrm{W}^*$-derivation $\partial \co \dom \partial \subset \cal{M} \to \cal{H}$ such that $\partial \co \dom \partial \subset \L^2(\cal{M}) \to \cal{H}$ is closable with closure also denoted by $\partial$ and satisfying $A_2=\partial^*\partial$. In other words, this means that the opposite of the infinitesimal generator $-A_2$ of a symmetric sub-Markovian semigroup of operators can always be represented as the composition of a <<divergence>> $\partial^*$ with a <<gradient>> $\partial$. Note that the symmetric Hilbert $\cal{M}$-bimodule $(\cal{H},\Phi,\Psi,\J)$ is not unique since one can always artificially expand $\cal{H}$. However, if we add an additional condition, namely that <<the bimodule is generated by the derivation $\partial$>>, we obtain uniqueness, see \cite[Theorem 6.19]{Wir22} for a precise statement. In this case, the bimodule (and its associated derivation) is called the tangent bimodule associated with the completely Dirichlet form (or the associated semigroup).


The Hodge-Dirac operator $D$ of \eqref{Hodge-Dirac-I} is related to the operator $A_2$ by
\begin{equation}
\label{carre-de-D}
D^2
\ov{\eqref{Hodge-Dirac-I}}{=}
\begin{bmatrix} 
0 & \partial^* \\ 
\partial & 0 
\end{bmatrix}^2
=\begin{bmatrix} 
\partial^*\partial & 0 \\ 
0 & \partial \partial^*
\end{bmatrix}
=\begin{bmatrix} 
A_2 & 0 \\ 
0 & \partial \partial^*
\end{bmatrix}. 
\end{equation} 

\paragraph{Some remarks in the $\Gamma$-regular case} In the following result, we connect the norm of the commutators $[D,f]$ and the norm $\norm{\cdot}_{\L^{2}_{\infty, \ell}(\cal{M} \subset \tilde{\cal{M}})}$ defined in \eqref{conditional-one-sided}, in the case where the completely Dirichlet form has the <<$\Gamma$-regularity>>. This situation, introduced in the unpublished paper \cite{JRS}, is the same that in \cite[p.~3415]{GJL20}, \cite[Theorem 2.1]{BGJ22} or in the tracial case of \cite[Corollary 7.6]{Wir22}. This condition means that for any element $f$ of $\dom A_2^{\frac{1}{2}}$ the <<carr\'e du champ>> $\Gamma(f,f)$ belongs to the space $\L^1(\cal{M})$. By \cite[Theorem 7.12]{Wir22}, this condition is equivalent to say that the tangent bimodule is normal.

We note the following straightforward result, satisfied in this context.

\begin{lemma}
The homomorphism $\pi \co \L^\infty(\cal{M}) \to \B(\L^2(\cal{M}) \oplus_2 \cal{H})$ is weak* continuous.
\end{lemma}

\begin{proof}
Let $(f_j)$ be a \textit{bounded} net of $\L^\infty(\cal{M})$ converging in the weak* topology to $f$. It is obvious that the net $(M_{f_j})$ is bounded. If $g,h \in \L^2(\cal{M})$, we have $\langle M_{f_j}(g),h \rangle_{\L^2(\cal{M})}=\tau((f_jg)^*h) = \tau(hg^*f_j^*)\xra[j]{} \tau(hg^*f^*)=\langle M_f(g),h \rangle_{\L^2(\cal{M})}$ since $hg^* \in \L^1(\cal{M})$. So $(M_{f_j})$ converges to $M_{f}$ in the weak operator topology. By \cite[Lemma 2.5 p.~69]{Tak02}, the weak operator topology and the weak* topology of $\B(\L^2(\cal{M}))$ coincide on bounded sets. We conclude that $(M_{f_j})$ converges to $M_{f}$ in the weak* topology. Since the tangent bimodule is normal, the map $\Phi$ is weak* continuous. So the net $(\Phi_{f_j})$ converges to $\Phi_{f}$ in the weak* topology. 
By \cite[Theorem A.2.5 (2) p.~360]{BLM04}, we conclude that $\pi$ is weak* continuous.    
\end{proof}

In this context, the Hilbert space of the tangent bimodule can be realized as a subspace of a noncommutative $\L^2$-space $\L^2(\tilde{\cal{M}})$ of a bigger finite von Neumann algebra $\tilde{\cal{M}}$ equipped with a normal finite faithful trace, i.e.~we can take $\cal{H}=\L^2(\tilde{\cal{M}})$. More precisely, we have a trace preserving embedding $J \co \cal{M} \to \tilde{\cal{M}}$. Moreover, in this case, the maps $\Phi$ and $\J$ are given by $\Phi_f \co \L^2(\tilde{\cal{M}}) \to \L^2(\tilde{\cal{M}})$, $g \mapsto fg$ for any $f \in \L^\infty(\cal{M})$ and $\J \co \L^2(\tilde{\cal{M}}) \to \L^2(\tilde{\cal{M}})$, $x \mapsto x^*$. Furthermore, for any $f \in \mathcal{B}$ we have $\partial f \in \tilde{\mathcal{M}}$ by construction, see \cite{JRS}. We denote by $\E \co \tilde{\cal{M}} \to \cal{M}$ the canonical trace preserving normal faithful conditional expectation associated with $J$ and by $J_2 \co \L^2(\cal{M}) \to \L^2(\tilde{\cal{M}})$ and $\E_2 \co \L^2(\tilde{\cal{M}}) \to \L^2(\cal{M})$ the induced maps on the associated $\L^2$-noncommutative spaces.

The next result says in particular that the second condition of Definition \ref{def-spec} is automatically satisfied, since the unital $*$-subalgebra $\mathcal{B}$ is a weak* dense in the von Neumann algebra $\cal{M}$.

\begin{prop}
\label{Rem-amal}
In the previous situation, we have the inclusion
\begin{equation}
\label{Lip-algebra-I-Schur}
\cal{B}
\subset \Lip_{D}(\L^\infty(\cal{M}))
\end{equation}
where the latter algebra is defined in \eqref{Lipschitz-algebra-def}. Moreover, for any $f \in \cal{B}$, we have
\begin{equation}
\label{commutator-D}
[D,f]
=\begin{bmatrix}
    0 &  -\E_2{\M}_{\partial f} \\
   \M_{\partial f}J_2  & 0  \\
\end{bmatrix},
\end{equation}
where $\M_{\partial(f)} \co \L^2(\tilde{\cal{M})} \to \L^2(\tilde{\cal{M})}$ is the multiplication operator by $\partial_{}(f)$, and finally
\begin{equation}
\label{norm-commutator}
\bnorm{[D,f]}_{\L^2(\cal{M}) \oplus_2 \L^2(\tilde{\cal{M}}) \to \L^2(\cal{M}) \oplus_2 \L^2(\tilde{\cal{M}})}
= \max\big\{ \norm{\partial f}_{\L^{2}_{\infty, \ell}(\cal{M} \subset \tilde{\cal{M}})}, \norm{\partial f^*}_{\L^{2}_{\infty, \ell}(\cal{M} \subset \tilde{\cal{M}})} \big\}.
\end{equation}
\end{prop}

\begin{proof} 
Let $f \in \cal{B}$. A standard calculation shows that
\begin{align*}
\MoveEqLeft
\big[D,f\big]
\ov{\eqref{Hodge-Dirac-I}\eqref{Def-pi-a}}{=}\begin{bmatrix} 
0 & \partial^* \\ 
\partial & 0
\end{bmatrix}\begin{bmatrix}
    \M_f & 0  \\
    0 & \Phi_f  \\
\end{bmatrix}
-\begin{bmatrix}
    \M_{f} & 0  \\
    0 & \Phi_{f}  \\
\end{bmatrix}
\begin{bmatrix} 
0 & \partial^* \\ 
\partial & 0
\end{bmatrix}\\
&=\begin{bmatrix}
   0 &  \partial^* \Phi_f \\
   \partial \M_f  &0  \\
\end{bmatrix}-\begin{bmatrix}
    0 &  \M_{f} \partial^*\\
    \Phi_{f}\partial & 0  \\
\end{bmatrix} 
=\begin{bmatrix}
    0 &  \partial^*\Phi_f-\M_{f} \partial^* \\
   \partial\M_f-\Phi_{f}\partial  & 0  \\
\end{bmatrix}.
\end{align*}
Now, we calculate the two non-zero entries of the commutator. For the lower left corner, if $g \in \cal{B}$ we have
\begin{align}
\label{Bon-commutateur}
\MoveEqLeft
(\partial\M_f-\Phi_{f}\partial)(g)           
=\partial\M_f(g)-\Phi_{f}\partial g
=\partial(fg)-f\partial(g) 
\ov{\eqref{Leibniz}}{=} \partial(f) g
=\M_{\partial f}J_2(g).
\end{align}
For the upper right corner, note that for any $h \in \dom \partial^*$ and any $f,g \in \cal{B}$,  we have
\begin{align*}
\MoveEqLeft
\la \partial g,fh \ra_{\L^2(\tilde{\cal{M}})}         
=\tau(\partial(g)^* fh ) 
=\tau(\partial(g^*) f h )\\
&\ov{\eqref{Leibniz}}{=} \tau(\partial(g^*f) h-g^*\partial(f)h )
=\tau(\partial(g^*f) h)-\tau(g^*\partial(f)h ) \nonumber \\
&=\tau(g^*f \partial^*(h))-\tau(g^*\partial(f)h ) 
=\tau(g^*(f \partial^*(h)-\partial(f)h ))
=\big\la g, f \partial^*(h)-\partial(f)h \big\ra_{\L^2(\cal{M})}. \nonumber
\end{align*}
Since $\cal{B}$ is a core of $\partial$, it is easy to see that the same relation is true for $g \in \dom \partial$. Hence $f h$ belongs to $\dom \partial^*$ and we have 
\begin{equation}
\label{divers-66}
\partial^*(f h)
=f \partial^*(h)-\partial(f)h.
\end{equation}
If $f \in \mathcal{B}$, $g \in \L^2(\cal{M})$ and $h \in \dom \partial^*$, we infer that
\begin{align*}
\MoveEqLeft
\big\langle \big(\partial^*\Phi_f-\M_f \partial^*\big)(h),g \big\rangle_{\L^2(\cal{M})}
=\big\langle \partial^*\Phi_f(h),g \big\rangle -\big\langle \M_f \partial^*(h),g \big\rangle_{}
=\big\langle \partial^*(fh),g \big\rangle -\big\langle f\partial^*(h),g \big\rangle_{} \\
&\ov{\eqref{divers-66}}{=} \big\langle f \partial^*(h)-\partial(f)h,g \big\rangle -\big\langle f\partial^*(h),g \big\rangle_{} 
=-\big\langle \partial(f)h,g \big\rangle 
= - \big\langle h,\partial_{}(f^*)g \big\rangle_{\L^2(\tilde{\cal{M})}} \\
&=-\big\langle h, \M_{(\partial_{}(f))^*}J_2(g) \big\rangle_{\L^2(\tilde{\cal{M})}} 
=-\big\langle \M_{\partial_{}(f)}(h), J_2(g)\big\rangle_{\L^2(\tilde{\cal{M})}} 
=-\big\langle \E_2{\M}_{\partial_{}(f)}(h),g\big\rangle_{\L^2(\cal{M})}.
\end{align*}
We conclude that
\begin{equation*}
\label{Commutateur-etrange}
\big(\partial^*\Phi_f-\M_f \partial^*\big)(h)
=-\E_2{\M}_{\partial f}(h).
\end{equation*}
Consequently, we obtained formula \eqref{commutator-D}. Since $\partial(f)$ belongs to the von Neumann algebra $\tilde{\mathcal{M}}$, it is now apparent that $\cal{B}$ is a subset of the Lipschitz algebra $\Lip_{D}(\L^\infty(\cal{M}))$. Furthermore,  we have 
\begin{align*}
\MoveEqLeft
\norm{\M_{\partial f}J_2}_{\L^2(\cal{M}) \to \L^2(\tilde{\cal{M}})}
=\sup_{\norm{g}_{\L^2(\cal{M})}=1}\norm{\M_{\partial f}J_2(g)}_{\L^2(\tilde{\cal{M}})} \\
&=\sup_{\norm{g}_{\L^2(\cal{M})}=1} \bnorm{\partial(f) g}_{\L^2(\tilde{\cal{M}})} 
\ov{\eqref{conditional-one-sided}}{=} \norm{\partial f}_{\L^{2}_{\infty, \ell}(\cal{M} \subset \tilde{\cal{M}})}.
\end{align*}
So we have proved, with an argument of duality for the second non-null entry of the commutator, the equality \eqref{norm-commutator}.
\end{proof}

\begin{remark} \normalfont
We probably have $\Lip_{D}(\L^\infty(\cal{M}))=\dom \partial$. The argument should be quite elementary. Moreover, the equality $\cal{B}= \cal{M} \cap \dom \partial$ seems to be essentially proven in \cite[Proposition 3.4]{DaL92}.
%
\end{remark}

\begin{example}
\normalfont
With the previous remark, we can recover the norm of \cite[Proposition 6.4 3.]{Arh24} since 
$$
\L^{2}_{\infty}(\L^\infty(G) \subset \L^\infty(G,\ell^\infty_m))
=\L^{2}_{\infty}(\L^\infty(G) \subset \L^\infty(G) \otvn \ell^\infty_m)
\ov{\eqref{mixed-particular}}{=}\L^\infty(G,\ell^2_m).
$$
\end{example}




\section{Local Coulhon-Varopoulos dimension and spectral dimension}
\label{sec-cdimension}

\paragraph{Noncommutative Dunford-Pettis theorem} We begin by explaining the noncommutative Dunford-Pettis theorem which, roughly speaking, states that completely bounded maps from a noncommutative $\L^1$-space into a noncommutative $\L^\infty$-space are integral operators with bounded kernels. Let $\cal{M}$ and $\cal{N}$ be von Neumann algebras. By \cite[Theorem 2.5.2 p.~49]{Pis03}, we have a complete isometry
$$
\label{}
\CB(\cal{M}_*,\cal{N})
=\cal{N} \otvn \cal{M} 
$$
where $\cal{M}_*$ is the predual of $\cal{M}$. This classical result was proved by Effros and Ruan in \cite[Theorem 3.2]{EfR90}. Now, let us assume that the von Neumann algebra $\cal{M}$ is finite and equipped with a normal finite faithful trace $\tau$. If $\cal{M}^{\op}$ denotes the opposite algebra, the isometric map $\L^1(\cal{M})\to \cal{M^\op_*}$, $f \mapsto \tau(f\,\cdot)$ allows us to define an operator space structure on the Banach space $\L^1(\cal{M})$, i.e.~this map becomes completely isometric. So we obtain an identification
\begin{equation}
\label{Effros-Ruan}
\CB(\L^1(\cal{M}),\L^\infty(\cal{M}))
=\L^\infty(\cal{M}) \otvn \raisebox{-0.3pt}{$\L^\infty(\cal{M}^\op)$}.
\end{equation}
More precisely, we associate to a <<kernel>> $K \in \L^\infty(\cal{M}) \otvn \raisebox{-0.1ex}{$\L^\infty(\cal{M}^\op)$}$ the completely bounded map $T_K \co \L^1(\cal{M}) \to \L^\infty(\cal{M})$ defined by 
\begin{equation}
\label{Kernel-operator}
T_K(f) 
\ov{\mathrm{def}}{=} (\Id \ot \tau)(K(1 \ot f)), \quad f \in \L^1(\cal{M}) 
\end{equation}
which satisfies 
\begin{equation}
\label{NC-Dunford-Pettis-1}
\norm{K}_{\L^\infty(\cal{M}) \otvn \L^\infty(\cal{M}^\op)}
=\norm{T_K}_{\cb,\L^1(\cal{M}) \to \L^\infty(\cal{M})}.
\end{equation}
In particular, if $\cal{M}$ is a commutative von Neumann algebra $\L^\infty(\Omega)$ for a $\sigma$-finite measure space $\Omega$, we recover with \eqref{min-et-cb} the classical Dunford-Pettis theorem \cite[p.~528]{Rob91} \cite[Theorem 1.3]{ArB94} \cite[Proposition 3.1]{GrH14} and we obtain the equality
\begin{equation}
\label{Dunford-Pettis-1}
\norm{K}_{\L^\infty(\Omega \times \Omega)}
=\norm{T_K}_{\L^1(\Omega) \to \L^\infty(\Omega)}.
\end{equation}
In this case, the operator $T_K$ is defined by $(T_Kf)(x) =\int_\Omega K(x,y) f(y) \d y$ 
for any $f \in \L^1(\Omega)$ and almost all $x \in \Omega$. 

For any $1 \leq p <\infty$, we also have a similar identification for any bounded operator $T \co \L^p(\Omega) \to \L^\infty(\Omega)$, see \cite[p.~528]{Rob91}, \cite[Theorem 1.3]{ArB94} and \cite[Lemma 3.3]{GrH14}. In this case, the equality \eqref{Dunford-Pettis-1} is replaced by
\begin{equation}
\label{Dunford-Pettis-2}
\norm{T}_{\L^p(\Omega) \to \L^\infty(\Omega)}
=\norm{K}_{\L^\infty(\Omega,\L^{p^*}(\Omega))}.
\end{equation}

\paragraph{Hilbert-Schmidt operators} 
It is well-known that Hilbert-Schmidt operators on classical $\L^2$-spaces identify to integral operators with square-integrable kernels, see e.g.~\cite[Proposition 3.4.16 p.~122]{Ped89} and \cite[Exercise 2.8.38 (ii) p.~170]{KaR97b}. We need a noncommutative generalization of this result. By the way, we refer to the paper \cite{Neu90} for a study of integral operators between noncommutative $\L^p$-spaces. Let us assume that $\cal{M}$ and $\cal{N}$ are finite von Neumann algebras equipped with normal finite faithful traces. We will use the scalar product $\la g,f \ra_{\L^2(\cal{M})} \ov{\mathrm{def}}{=}\tau(g^* f)$ where $f,g \in \L^2(\cal{M})$. Using the notation $\ovl{E}$ for the complex conjugate of the Banach space (or operator space) $E$ (see \cite[p.~63]{Pis03}), recall that we have a completely isometric linear isomorphism $\ovl{\L^2(\cal{N})} \to \L^2(\cal{N}^\op)$ induced by the linear complete isometry $\ovl{\cal{N}} \to \cal{N}^\op$, $x \mapsto x^*$. Note that we have an isometry $\L^2(\cal{M}) \ot_2 \ovl{\L^2(\cal{N})}=\L^2(\cal{M} \otvn \cal{N}^\op)$. Moreover, we have $\la g,f \ra_{\ovl{\L^2(\cal{N})}} = \ovl{\la g,f \ra_{\L^2(\cal{N})}}$ for any $f,g \in \L^2(\cal{N})$.

\begin{prop}
\label{prop-HS}
Let $\cal{M}$ and $\cal{N}$ be finite von Neumann algebras equipped with normal finite faithful traces denoted by the same notation $\tau$. For any element $K \in \L^2(\cal{M}) \ot_2\raisebox{-0.1ex}{$\overline{\L^2(\cal{N})}$}$, there exists a unique linear map $T_K \co \L^2(\cal{N}) \to \L^2(\cal{M})$ such that
\begin{equation}
\label{def-HS}
\la g,T_K(f) \ra_{\L^2(\cal{M})}  
=(\tau \ot \tau)(K (g^* \ot f)), \quad f \in \L^2(\cal{N}),g \in \L^2(\cal{M}).
\end{equation}
Moreover, this map is a Hilbert-Schmidt operator and the linear map $\L^2(\cal{M}) \ot_2\raisebox{-0.1ex}{$\overline{\L^2(\cal{N})}$} \to S^2(\L^2(\cal{N}),\L^2(\cal{M}))$, $K \mapsto T_K$ is a surjective isometry:
\begin{equation}
\label{NC-HS-1}
\norm{T_K}_{S^2(\L^2(\cal{N}),\L^2(\cal{M}))}
=\norm{K}_{\L^2(\cal{M}) \ot_2\overline{\L^2(\cal{N})}}.
\end{equation}
\end{prop}

\begin{proof}
The existence and the uniqueness of $T_K$ are clear. Consider orthonormal bases $(e_i)_{i \in I}$  and $(f_j)_{j \in J}$ of the Hilbert spaces $\L^2(\cal{M})$ and $\L^2(\cal{N})$. For any element $K \in \L^2(\cal{M}) \ot_2\raisebox{-0.1ex}{$\overline{\L^2(\cal{N})}$}$, we have with \cite[(2.27) p.~262]{Kat76}
$$
\norm{T_K}_{S^2(\L^2(\cal{N}),\L^2(\cal{M}))}
=\bigg(\sum_{i \in I} \norm{T_K(f_j)}_{\L^2(\cal{M})}^2  \bigg)^{\frac{1}{2}}
=\bigg(\sum_{(i,j) \in I \times J} \big|\la e_i,T_K(f_j) \ra_{\L^2(\cal{M})} \big|^2 \bigg)^{\frac{1}{2}}
$$
with
\begin{align*}
\MoveEqLeft
\big\la e_i,T_K (f_j) \big\ra_{\L^2(\cal{M})}        
\ov{\eqref{def-HS}}{=} (\tau \ot \tau)(K (e_i^* \ot f_j)) \\
&=\la e_i \ot f_j^*,K \ra_{\L^2(\cal{M}) \ot_2 \L^2(\cal{N})}
=\ovl{\la K, e_i \ot f_j^* \ra_{\L^2(\cal{M}) \ot_2 \ovl{\L^2(\cal{N})}}}.         
\end{align*}
Since $(e_i \ot f_j^*)$ is a basis of the Hilbert space $\L^2(\cal{M}) \ot_2\raisebox{-0.1ex}{$\overline{\L^2(\cal{N})}$}$, we have 
$$
\norm{K}_{\L^2(\cal{M}) \ot_2 \ovl{\L^2(\cal{N})}}
=\bigg(\sum_{(i,j) \in I \times J}\big|\la K, e_i \ot f_j^* \ra_{\L^2(\cal{M}) \ot_2 \ovl{\L^2(\cal{N})}}\big|^2\bigg)^{\frac{1}{2}}.
$$
We deduce that the map $K \to T_K$ is an isometry from the Hilbert space $\L^2(\cal{M}) \ot_2 \raisebox{-0.1ex}{$\overline{\L^2(\cal{N})}$}$ into the space $S^2(\L^2(\cal{N}),\L^2(\cal{M}))$ of all Hilbert-Schmidt operators. Its range is a closed subspace of the Banach space $S^2(\L^2(\cal{N}),\L^2(\cal{M}))$. If $K=h \ot k$ is an element of the Hilbert space $\L^2(\cal{M}) \ot_2\raisebox{-0.1ex}{$\overline{\L^2(\cal{N})}$}$, we have for any $g \in \L^2(\cal{M})$ and any $f \in \L^2(\cal{N})$
$$
\la g,T_K(f) \ra_{\L^2(\cal{M})} 
\ov{\eqref{def-HS}}{=} (\tau \ot \tau)(K (g^* \ot f))
=(\tau \ot \tau)\big((h \ot k) (g^* \ot f)\big)
=\tau(h g^*)  \tau(k f).
$$
Moreover, we have $\la  g,\tau(k f)h\ra_{\L^2(\cal{M})}
=\tau(g^*h)\tau(k f)$. Hence the finite-rank operator $f \mapsto \tau(k f)h$ belongs to the range of the map $K \mapsto T_K$. So the range contains all finite sums of these operators. This means that it contains the set of all finite-rank operators. We conclude that this map is surjective.
\end{proof}


\begin{remark} \normalfont
Consider two Hilbert spaces, $\cal{H}$ and $\cal{K}$, each equipped with an inner product that is antilinear in the first argument and linear in the second. For any $k \in \cal{K}$ and any $h \in \cal{H}$, we may identify the tensor $k \ot h$ with the finite-rank operator $\cal{H} \to \cal{K}$, $\xi \mapsto  \la h,\xi \ra k$. By \cite[Proposition 2.6.9 p.~142]{KaR97a}, this identification induces an isometric isomorphism $\cal{K} \ot_2 \ovl{\cal{H}}=S^2(\cal{H},\cal{K})$. Proposition \ref{prop-HS} does not seem to be a direct consequence of the case $\cal{H}=\L^2(\cal{M})$ and $\cal{K}=\L^2(\cal{N})$. 
\end{remark}





We will use the following simple observation obtained from the previous result and some obvious inclusions. 

\begin{lemma}
\label{Lemma-Schmidt}
Let $\cal{M}$ be a von Neumann algebra equipped with a normal finite faithful trace. For each element $K$ of $\L^\infty(\cal{M}) \otvn \L^\infty(\cal{M}^\op)$ (or $\L^\infty(\cal{M}) \otvn \L^2(\cal{M})^\op$) the linear map $T_K \co \L^1(\cal{M}) \to \L^\infty(\cal{M})$ (or $T_K \co \L^2(\cal{M}) \to \L^\infty(\cal{M}))$ induces a Hilbert-Schmidt operator $T_K \co \L^2(\cal{M}) \to \L^2(\cal{M})$ (both maps coincide on $\cal{M}$).
\end{lemma}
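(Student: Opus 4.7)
The plan is to realize $K$ as an element of the Hilbert space $\L^2(\cal{M}) \ot_2 \L^2(\cal{M}^\op)$ and then invoke Proposition \ref{prop-HS} to produce the desired Hilbert–Schmidt operator. Since $\tau \ot \tau$ is a normal faithful finite trace on the finite von Neumann algebra $\cal{M} \otvn \cal{M}^\op$, every element $K$ of that algebra belongs to $\L^2(\cal{M} \otvn \cal{M}^\op)$ with $\norm{K}_{\L^2} \leq \tau(1) \norm{K}_{\cal{M} \otvn \cal{M}^\op}$. Under the canonical isometric identification $\L^2(\cal{M} \otvn \cal{M}^\op) = \L^2(\cal{M}) \ot_2 \L^2(\cal{M}^\op)$, Proposition \ref{prop-HS} then yields a Hilbert–Schmidt operator $S_K \co \L^2(\cal{M}) \to \L^2(\cal{M})$ with $\norm{S_K}_{S^2(\L^2(\cal{M}))} = \norm{K}_{\L^2}$.

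It remains to check that $S_K$ coincides with $T_K$ when restricted to $\cal{M} \subset \L^1(\cal{M}) \cap \L^2(\cal{M})$. For $x,y \in \cal{M}$, the scalar $\la S_K(x),y \ra$ is prescribed by the right-hand side of \eqref{def-HS}, while $\la T_K(x),y \ra = \tau(T_K(x) y)$ can be rewritten from \eqref{Kernel-operator} as $(\tau \ot \tau)\bigl(K (1 \ot x)(y \ot 1)\bigr)$. Both pairings are linear and weak*-continuous in $K \in \cal{M} \otvn \cal{M}^\op$ (they amount to an application of the normal functional $\tau \ot \tau$ to products depending multiplicatively on $K$). It is therefore enough to verify equality on the algebraic tensors $K = a \ot b^\op$ with $a,b \in \cal{M}$, which are weak*-dense.

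For such an elementary tensor, a direct computation in $\cal{M} \otvn \cal{M}^\op$ using the convention $b^\op x^\op = (xb)^\op$ gives $(a \ot b^\op)(1 \ot x) = a \ot (xb)^\op$, so that
$$
T_K(x) = (\Id \ot \tau)\bigl(a \ot (xb)^\op\bigr) = \tau(xb)\, a = \tau(bx)\, a,
$$
by the trace property; on the other hand \eqref{identification} says precisely that the Hilbert–Schmidt operator attached to $a \ot b^\op$ is the rank-one map $x \mapsto \tau(bx)a$, so $S_K(x) = T_K(x)$ for $x \in \cal{M}$. The only delicate point of the argument is the bookkeeping with the opposite algebra convention; apart from this, the proof is a direct combination of the finiteness of $\tau \ot \tau$ with Proposition \ref{prop-HS}, and no further machinery is required.
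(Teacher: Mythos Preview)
The paper states this lemma as a ``simple observation'' and does not supply a proof, so there is nothing to compare against; your argument is correct and is exactly the intended one: use the finite trace to embed $\cal{M} \otvn \cal{M}^\op$ into $\L^2(\cal{M}) \ot_2 \L^2(\cal{M}^\op)$ and apply Proposition~\ref{prop-HS}. One minor simplification: since $1 \ot x$ and $y \ot 1$ commute in $\cal{M} \otvn \cal{M}^\op$, the expression $(\tau \ot \tau)\bigl(K(1 \ot x)(y \ot 1)\bigr)$ is literally $(\tau \ot \tau)\bigl(K(y \ot x)\bigr)$, i.e.\ the right-hand side of \eqref{def-HS}, so the detour through weak*-density on elementary tensors is unnecessary.
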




Following \cite[Definition 4.1 p.~99]{Zag19}, we say that a strongly continuous semigroup $(T_t)_{t \geq 0}$ of operators acting on a Hilbert space $H$ is a Gibbs semigroup if the operator $T_t \co H \to H$ is trace-class for any $t>0$.

\begin{lemma}
\label{Lemma-Gibbs}
Let $\cal{M}$ be a finite von Neumann algebra equipped with a normal finite faithful trace. Consider a symmetric sub-Markovian semigroup $(T_t)_{t \geq 0}$ of operators acting on $\cal{M}$. Suppose the estimate $
\norm{T_t}_{\cb,\L^1(\cal{M}) \to \L^\infty(\cal{M})} \lesssim \frac{1}{t^{\frac{d}{2}}}$ for some $d > 0$ and any $0 < t \leq 1$. Then the semigroup induces a Gibbs semigroup $(T_{t})_{t \geq 0}$ on the Hilbert space $\L^2(\cal{M})$. Moreover, its generator $-A_2$ has compact resolvent and its spectrum consists entirely of isolated eigenvalues with finite multiplicities. Finally, the fixed-point subalgebra $\{ x \in \cal{M} : T_t(x)=x \text{ for any } t \geq 0 \}$ is finite-dimensional.
\end{lemma}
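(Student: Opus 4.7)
The plan is to leverage the hypothesis through the noncommutative Dunford--Pettis identification \eqref{NC-Dunford-Pettis-1} to produce Hilbert--Schmidt estimates for small times, then upgrade to trace class for every $t > 0$ via the semigroup property, and finally invoke self-adjointness of $A_2 = \partial^*\partial$ together with the spectral theorem to conclude compact resolvent.

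First, for each $0 < s \leq 1$, the hypothesis together with \eqref{NC-Dunford-Pettis-1} will produce a kernel $K_s \in \cal{M} \otvn \cal{M}^\op$ representing $T_s$ through \eqref{Kernel-operator} with $\norm{K_s}_{\cal{M} \otvn \cal{M}^\op} \lesssim s^{-d/2}$. Since the trace $\tau$ is finite, the natural inclusion $\cal{M} \otvn \cal{M}^\op \hookrightarrow \L^2(\cal{M}) \ot_2 \L^2(\cal{M}^\op)$ is bounded (with constant depending only on $\tau(1)$), so the same estimate persists in the $\L^2$-tensor norm. Lemma \ref{Lemma-Schmidt} then identifies the induced Hilbert--Schmidt operator on $\L^2(\cal{M})$ with $T_{s,2}$: the two maps coincide on $\cal{M}$, both are bounded, and $\cal{M}$ is $\L^2$-dense in $\L^2(\cal{M})$. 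Combined with the isometry \eqref{NC-HS-1}, this yields
\begin{equation*}
\norm{T_{s,2}}_{S^2(\L^2(\cal{M}))} \lesssim \frac{1}{s^{d/2}}, \quad 0 < s \leq 1.
\end{equation*}

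Next, the semigroup law upgrades this to trace class regularity for every $t > 0$. For $0 < t \leq 2$, I factor $T_{t,2} = T_{t/2,2} \circ T_{t/2,2}$, a product of two Hilbert--Schmidt operators, hence trace class. For $t > 2$, I factor $T_{t,2} = T_{1,2} \circ T_{t-1,2}$: the first factor is trace class by the previous case, and the second is a contraction on $\L^2(\cal{M})$ by sub-Markovianity. Together with the standard strong continuity of the $\L^2$-extension of a sub-Markovian semigroup, this exhibits $(T_{t,2})_{t \geq 0}$ as a Gibbs semigroup. For the spectral statement, I use that $A_2 = \partial^*\partial$ is a non-negative self-adjoint operator on $\L^2(\cal{M})$ (see \eqref{carre-de-D}). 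Since $T_{t,2} = \e^{-tA_2}$ is compact for every $t > 0$, the spectral theorem forces the spectral measure of $A_2$ to be supported on a sequence $0 \leq \lambda_0 \leq \lambda_1 \leq \cdots$ tending to infinity with finite-dimensional spectral projections; equivalently, $A_2$ has compact resolvent, and Riesz--Schauder then gives that its spectrum consists entirely of isolated eigenvalues of finite multiplicity.

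The main obstacle I anticipate is the careful identification in the first step of two a priori different objects: the $\L^2$-extension $T_{s,2}$ of the sub-Markovian semigroup and the Hilbert--Schmidt operator induced by the Dunford--Pettis kernel via Lemma \ref{Lemma-Schmidt}. Once this compatibility is secured through the density of $\cal{M}$ in $\L^2(\cal{M})$ and the coherence of the $\L^1$, $\L^2$ and $\cal{M}$ actions, the upgrade to trace class is a routine two-line semigroup argument, and the spectral conclusion follows directly from the self-adjointness of $A_2$.
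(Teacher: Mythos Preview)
Your proposal is correct and follows essentially the same route as the paper: noncommutative Dunford--Pettis plus Lemma~\ref{Lemma-Schmidt} gives Hilbert--Schmidt for small times, the semigroup law upgrades to trace class for all $t>0$, and compactness of $T_{t,2}$ yields compact resolvent of $A_2$. The only genuine difference is in this last step: the paper quotes general semigroup results (\cite[Lemma~4.22 and Theorem~4.29]{EnN00} and \cite[Theorem~6.29]{Kat76}) to pass from compact $T_{t,2}$ to compact resolvent and discrete spectrum, whereas you exploit the self-adjointness of $A_2$ and the spectral theorem directly. Your argument is more elementary but relies on the symmetric sub-Markovian setting (which is the ambient hypothesis of the section, so this is fine); the paper's version would go through for more general strongly continuous semigroups.
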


\begin{proof}
For any $t \geq 0$, we have a contractive operator $T_{t} \co \L^2(\cal{M}) \to \L^2(\cal{M})$. By the identification \eqref{NC-Dunford-Pettis-1} and the assumption \eqref{Varo-Coulhon-dim-cb}, each operator $T_t \co \L^1(\cal{M}) \to \L^\infty(\cal{M})$ admits a kernel $K_t$ belonging to the von Neumann algebra $\L^\infty(\cal{M}) \otvn \raisebox{-0.3pt}{$\L^\infty(\cal{M}^\op)$}$. Lemma \ref{Lemma-Schmidt} says that for all $0 < t \leq 1$, we have a Hilbert-Schmidt operator $T_{t} \co \L^2(\cal{M}) \to \L^2(\cal{M})$. From \cite[Lemma 4.22 p.~117]{EnN00}, we deduce that $T_{t} \co \L^2(\cal{M}) \to \L^2(\cal{M})$ is compact for \textit{any} $t>0$. By \cite[Theorem 4.29 p.~119]{EnN00}, we infer that the generator $-A_2$ has compact resolvent. Consequently, by \cite[Theorem 6.29 p.~187]{Kat76} its spectrum consists entirely of isolated eigenvalues with finite multiplicities. 

Moreover, for any $0 < t \leq 1$, we can write $T_{t}=(T_{\frac{t}{2}})^2$. So by \cite[Corollary 3.16 p.~90]{Zag19} the operator $T_{t} \co \L^2(\cal{M}) \to \L^2(\cal{M})$ is trace-class for any $0 < t \leq 1$. If $t>1$, the equality $T_{t}=T_{t-1}T_{1}$ shows that $T_{t}$ is also trace-class if $t>1$ by the ideal property of the space of trace-class operators.

By Proposition \ref{Prop-mean}, we infer that the fixed subspace $\Fix (T_{t})_{t \geq 0}$  is finite-dimensional, where we see the operators of the semigroup as operators on $\L^2(\cal{M})$. Recall that $\Fix (T_{t})_{t \geq 0}$ is the noncommutative $\L^2$-space associated to the von Neumann algebra $\{ x \in \cal{M} : T_t(x)=x \text{ for any } t \geq 0 \}$ and the induced trace. By finite-dimensionality, the space $\{ x \in \cal{M} : T_t(x)=x \text{ for any } t \geq 0 \}$ is equal to the space $\Fix (T_{t})_{t \geq 0}$ and consequently this algebra is finite-dimensional.
\end{proof}


Observe the assumption of injectivity on the von Neumann algebra $\cal{M}$ in the following result and note Proposition \ref{prop-Junge-square}.


\begin{prop}
\label{prop-ine-major}
Let $\cal{M}$ be an injective finite von Neumann algebra equipped with a normal finite faithful trace. Consider a $*$-preserving selfadjoint operator $T \co \cal{M} \to \cal{M}$. Assume that it induces a completely bounded map $T \co \L^{2}(\cal{M}) \to \L^\infty(\cal{M})$. Then this operator induces a trace-class operator $T^2 \co \L^2(\cal{M}) \to \L^2(\cal{M})$ and we have
\begin{equation}
\label{magic-estimates-5}
\norm{T^2}_{S^1(\L^2(\cal{M}))}
\lesssim \norm{T^2}_{\cb,\L^1(\cal{M}) \to \L^{\infty}(\cal{M})}.
\end{equation}
\end{prop}

\begin{proof} 
Observe that $\L^2(\cal{M})^*=\ovl{\L^2(\cal{M})}=\L^2(\cal{M}^\op)$ completely isometrically. So, by the identification described in \eqref{normal-minimal} (since $\cal{M}$ is injective), there exists an element $K \in \L^\infty(\cal{M}) \otvn \raisebox{-0.1ex}{$\L^2(\cal{M}^\op)$}$ such that $T=T_{K}$ with
\begin{equation}
\label{equa-infini}
\norm{T}_{\cb,\L^2(\cal{M}) \to \L^\infty(\cal{M})}
=\norm{K}_{\L^\infty(\cal{M}) \otvn \raisebox{-0.1ex}{$\scriptstyle\L^2(\cal{M}^\op)$}}.
\end{equation}
By Lemma \ref{Lemma-Schmidt}, we obtain a Hilbert-Schmidt operator $T \co \L^2(\cal{M}) \to \L^2(\cal{M})$. Hence, the operator $T^2 \co \L^2(\cal{M}) \to \L^2(\cal{M})$ is trace-class. Using again the injectivity of the von Neumann algebra $\cal{M}$, we see that 
\begin{align}
\MoveEqLeft
\label{interpol-formula-4}
\L^2(\cal{M} \otvn \cal{M}^\op)
=\L^2(\cal{M},\L^2(\cal{M}^\op))
\ov{\eqref{Lp-vect}}{=} \big(\L^\infty(\cal{M},\L^2(\cal{M}^\op)), \L^{1}(\cal{M},\L^{2}(\cal{M}^\op))\big)_{\frac{1}{2}}.
\end{align}
Now, by interpolation we obtain 
\begin{align*}
\MoveEqLeft
\bnorm{T^2}_{S^1(\L^2(\cal{M}))}          
= \norm{T}_{S^2(\L^2(\cal{M}))}^2
\ov{\eqref{NC-HS-1}}{=} \norm{K}_{\L^2(\cal{M}) \ot_2 \L^2(\cal{M}^\op)}^2 \\
&=\norm{K}_{\L^2(\cal{M} \otvn \cal{M}^\op)}^2 
\ov{\eqref{inclusions-interpolation}}{\lesssim} \norm{K}_{\L^\infty(\cal{M},\L^2(\cal{M}^\op))}^2
\ov{\eqref{def-Lp-vec-1}}{=} \norm{K}_{\L^\infty(\cal{M}) \otvn \L^2(\cal{M}^\op)}^2 \\
&\ov{\eqref{equa-infini}}{=} \norm{T}_{\cb,\L^2(\cal{M}) \to \L^\infty(\cal{M})}^2 
\ov{\eqref{formule-square}}{=} \bnorm{T^2}_{\cb,\L^{1}(\cal{M}) \to \L^\infty(\cal{M})}
\end{align*}
(in the commutative case, we can use \eqref{Dunford-Pettis-2} instead of \eqref{def-Lp-vec-1} and \eqref{equa-infini}). 
\end{proof}

Now, we can obtain the following result. 

\begin{cor}
\label{cor-trace-class}
Let $\cal{M}$ be a finite von Neumann algebra equipped with a normal finite faithful trace. Consider a symmetric sub-Markovian semigroup $(T_t)_{t \geq 0}$ of operators acting on $\cal{M}$. Assume the estimate $
\norm{T_t}_{\cb,\L^1(\cal{M}) \to \L^\infty(\cal{M})} 
\lesssim \frac{1}{t^{\frac{d}{2}}}$ for some $d > 0$ and any $0 < t \leq 1$. Then the operator $|D|^{-1}$ belongs to the weak Schatten space $S^{d,\infty}(\L^2(\cal{M}) \oplus_2 \cal{H})$ and we have
\begin{equation}
\label{magic-estimates-3-bis}
\norm{T_{t}}_{S^1(\L^2(\cal{M}))}
\lesssim \frac{1}{t^{\frac{d}{2}}}, \quad 0 < t \leq 1.
\end{equation}
\end{cor}

\begin{proof}
With Proposition \ref{prop-ine-major} and Proposition \ref{prop-Junge-square}, we obtain the estimate \eqref{magic-estimates-3-bis}. 
Observe that the subspace $\L_0^2(\cal{M})=\ovl{\Ran A_2}$ is stable under each operator $T_t$ by \eqref{A-et-Tt-commute}. Hence, we clearly have by restriction
\begin{equation}
\label{magic-estimates-3}
\norm{T_{t}}_{S^1(\L^2_0(\cal{M}))}
\lesssim \frac{1}{t^{\frac{d}{2}}}, \quad 0 < t \leq 1.
\end{equation}
Note that by Lemma \ref{Lemma-Gibbs}, the positive selfadjoint operator $A_2$ has compact resolvent. So we can use Proposition \ref{th-Karamata-Iochum} since we can suppose that the sequence of eigenvalues is infinite. This implies that the eigenvalues of the operator satisfy
$$
\lambda_n\big(A_2|_{\ovl{\Ran A_2}}\big)^{-1}
=O_{n \to \infty}\bigg(\frac{1}{n^{\frac{2}{d}}}\bigg).
$$ 
On the Hilbert space $(\ker \partial^* \partial)^\perp \oplus_2 (\ker \partial \partial^*)^\perp$, we have
\begin{align}
\label{D-1}
\MoveEqLeft
|D|^{-1}
=(D^2)^{-\frac{1}{2}}
\ov{\eqref{carre-de-D}}{=} \begin{bmatrix} 
\partial^* \partial & 0 \\ 
0 & \partial \partial^*
\end{bmatrix}^{-\frac{1}{2}}
=\begin{bmatrix} 
\big(\partial^* \partial\big)^{-\frac{1}{2}} & 0 \\ 
0 & \big(\partial \partial^*\big)^{-\frac{1}{2}}
\end{bmatrix}.            
\end{align} 
By Theorem \ref{Th-unit-eq}, we know that the operators $\partial^* \partial |_{(\ker \partial)^\perp}$ and $\partial \partial^*|_{(\ker \partial^*)^\perp}$ are unitarily equivalent. Moreover, we have
$$
(\ker \partial)^\perp
\ov{\eqref{lien-ker-image}}{=} \ovl{\Ran \partial^*}
\ov{\eqref{Kadison-image}}{=}
\ovl{\Ran \partial^* \partial}
\ov{\eqref{lien-ker-image}}{=} (\ker \partial^* \partial)^\perp
=(\ker A_2)^\perp
=\ovl{\Ran A_2}
$$
and
$$
(\ker \partial \partial^*)^\perp
\ov{\eqref{Kadison-image}}{=} (\ker \partial^*)^\perp.
$$
Consequently $A_2^{-\frac{1}{2}}=(\partial^* \partial)^{-\frac{1}{2}}|_{|_{\ovl{\Ran A_2}}}$ and $(\partial \partial^*)^{-\frac{1}{2}}|_{(\ker \partial \partial^*)^\perp}$ are unitarily equivalent. We conclude that $\lambda_n(|D|^{-1})
=O_{n \to \infty}\big(\frac{1}{n^{\frac{1}{d}}}\big)$. By \eqref{weak-Schatten}, this means that the Hodge-Dirac operator $D$ belongs to the weak Schatten space $S^{d,\infty}(\L^2(\cal{M}) \oplus_2 \cal{H})$.
\end{proof}

With the second inclusion of \eqref{inclusions-weak-Schatten}, Corollary \ref{cor-trace-class}  immediately gives the following result.

\begin{cor}
\label{th-spectral-coulhon}
Let $\cal{M}$ be a finite von Neumann algebra equipped with a normal finite faithful trace. Consider a symmetric sub-Markovian semigroup $(T_t)_{t \geq 0}$ of operators acting on $\cal{M}$. Suppose that $\norm{T_t}_{\cb,\L^1(\cal{M}) \to \L^\infty(\cal{M})} \lesssim \frac{1}{t^{\frac{d}{2}}}$  for some $d > 0$ and any $0 < t \leq 1$. Then for any $p > d$, the operator $|D|^{-p}$ is trace-class, i.e.~$|D|^{-1}$ belongs to the Schatten class $S^p(\L^2(\cal{M}) \oplus_2 \cal{H})$. 
\end{cor}



Note that the knowledge of the small-time asymptotic of the trace of the operators of the semigroup gives the spectral dimension by the following result.

\begin{prop}
\label{prop-trace-spectral triple}
Let $\cal{M}$ be a finite von Neumann algebra equipped with a normal finite faithful trace. Consider a symmetric sub-Markovian semigroup $(T_t)_{t \geq 0}$ of operators acting on $\cal{M}$. Assume that the spectrum of $A_2$ consists entirely of isolated eigenvalues with finite multiplicities. If for some $d >0$ and some constant $c > 0$ we have $\tr T_t \underset{0}{\sim} \frac{c}{t^{\frac{d}{2}}}$. The spectral dimension of the Dirac operator $D$ defined in \eqref{Hodge-Dirac-I} is equal to $d$.
\end{prop}

\begin{proof}
We denote by $(\lambda_n)$ the sequence of eigenvalues of the operator $A_2$, which can be assumed to be infinite. We have
$
\sum_{n=1}^{\infty} \e^{-\lambda_n t}
=\tr T_t
\underset{0}{\sim}  \frac{c}{t^\frac{d}{2}}$.  
By Karamata's theorem \ref{thm-Karamata}, we deduce that for some constant $c'$ we have $
\lambda_n 
\sim c'n^{\frac{2}{d}}$. Similarly to the end of the proof of Theorem \ref{cor-trace-class}, we see that the non-zero elements of the spectrum of the operator $|D|$ are the $\lambda_n^{\frac{1}{2}}$'s. Consequently, the ones of the operator $|D|^{-1}$ are the $\lambda_n^{-\frac{1}{2}}$'s. Moreover, we have $\lambda_n^{-\frac{1}{2}} \sim c''n^{-\frac{1}{d}}$ for some positive constant $c''$. The proof is complete.
\end{proof}


\section{Link with the completely bounded Hardy-Littlewood-Sobolev theory}
\label{Sec-Hardy}


In this section, we give a second proof of Corollary \ref{th-spectral-coulhon} in proving some result connected to the completely bounded Hardy-Littlewood-Sobolev theory. The following result is announced in \cite[Corollary 4.31 p.~69]{Zha18} and established with a complicated proof. The commutative case is a classical result, see \cite[Theorem II.2.7 p.~12]{VSCC92} and \cite[Theorem 9.4.5 p.~173]{App19}. We refer also to \cite[Corollary 1.1.4 p.~622]{JuM10} for a noncommutative variant\footnote{\thefootnote. Note that this result requires a slightly weaker assumption $\norm{T_t}_{\cb, \L^p_0(\cal{M}) \to \L^q_0(\cal{M})} \lesssim \frac{1}{t^{\frac{d}{2}(\frac{1}{p}-\frac{1}{q})}}$ for any $t > 0$.}.

\begin{thm}
\label{th-Zhao}
Let $\cal{M}$ be a semifinite von Neumann algebra equipped with a normal semifinite faithful trace. Suppose that $1 < p < q < \infty$. Consider a symmetric sub-Markovian semigroup $(T_t)_{t \geq 0}$ of operators acting on $\cal{M}$ such that $\norm{T_t}_{\cb, \L^p(\cal{M}) \to \L^q(\cal{M})} \lesssim \frac{1}{t^{\frac{d}{2}(\frac{1}{p}-\frac{1}{q})}}$ for any $t > 0$. If $\alpha \ov{\mathrm{def}}{=}\frac{d}{2}(\frac{1}{p}-\frac{1}{q})$ this implies the complete boundedness of the operator $A^{-\alpha} \co \L_0^p(\cal{M}) \to \L^q_0(\cal{M})$. 	
\end{thm}

In the \textit{semifinite} case, we refer to \cite{PiX03} for the definition of noncommutative $\L^p$-spaces generalizing very slightly \eqref{norm-Lp}. Note that for any $1 < p < \infty$ the Banach space $\L_0^p(\cal{M})$ is defined by $\L_0^p(\cal{M}) \ov{\mathrm{def}}{=} \ovl{\Ran A_p}$ where $-A_p$ is the generator of the strongly continuous semigroup $(T_{t})_{t \geq 0}$ on the Banach space $\L^p(\cal{M})$. We still have $\L^p(\cal{M}) \ov{\eqref{decompo-reflexive}}{=} \L_0^p(\cal{M}) \oplus \ker A_p$. 

The following result is stated in \cite[Theorem 1.1.7 p.~624]{JuM10}. Note that the proof of this result uses the interpolation result \cite[Lemma 1.1.6 p.~623]{JuM10} which seems false in the light of the classical problem \cite[Problem 5.4 p.~1143]{KaMS03} on the  interpolation of compact operators. However, the result \cite[Lemma 1.1.6 p.~623]{JuM10} can be replaced by \cite[Theorem 5.5 p.~1143]{KaMS03}. It is worth mentioning that in this result, the choice $q=\infty$ is allowed.

\begin{thm}
\label{thm-interpolation-Junge-Mei}
Let $\cal{M}$ be a finite von Neumann algebra equipped with a normal finite faithful trace. Let $(T_t)_{t \geq 0}$ be a sub-Markovian semigroup of operators acting on $\cal{M}$  satisfying $\norm{T_t}_{\L_0^1(\cal{M}) \to \L^\infty(\cal{M})} \lesssim \frac{1}{t^{\frac{d}{2}}}$ for some $d>0$ and any $t > 0$ and such that $A^{-1}$ is compact on $\L_0^2(\cal{M})$. Then for all $1 \leq p < q \leq \infty$ such that $\Re z > \frac{d}{2}(\frac{1}{p}-\frac{1}{q})$ we have a well-defined compact operator $A^{-z} \co \L_0^p(\cal{M}) \to \L^q_0(\cal{M})$.
\end{thm}

Now, we will prove in the next result a completely bounded variant with different assumptions. For the second proof of Corollary \ref{th-spectral-coulhon}, the crucial case is the case $q=\infty$. 

\begin{thm}
\label{Th-Hardy-mien}
Let $\cal{M}$ be a finite von Neumann algebra equipped with a normal finite faithful trace. Consider a symmetric sub-Markovian semigroup $(T_t)_{t \geq 0}$ of operators acting on $\cal{M}$.   Suppose that $1 \leq p < q \leq \infty$. The estimate $\norm{T_t}_{\cb, \L^p(\cal{M}) \to \L^q(\cal{M})} \lesssim \frac{1}{t^{\frac{d}{2}(\frac{1}{p}-\frac{1}{q})}}$ for some $d>0$ and any $0 < t \leq 1$ implies the complete boundedness of the operator $A^{-z} \co \L_0^p(\cal{M}) \to \L^q_0(\cal{M})$ for any complex number $z$ such that $\Re z > \frac{d}{2}(\frac{1}{p}-\frac{1}{q})$. 	
The same is true for norms instead of completely bounded norms. 
\end{thm}

\begin{proof}
\textit{First proof for $1<p < q <\infty$.} We let $\alpha \ov{\mathrm{def}}{=} \frac{d}{2}(\frac{1}{p}-\frac{1}{q})$. By \cite[Proposition 1.1.5 p.~623]{JuM10}, we have a completely bounded operator $A^{-\zeta} \co \L_0^p(\cal{M}) \to \L^p_0(\cal{M})$ for any complex number $\zeta$ such that $\Re \zeta > 0$ since $1 < p <\infty$. Now, for any complex number $z$ such that $\Re z > \alpha$ we can write $A^{-z} = A^{-\alpha}A^{-(z-\alpha)}$. We have $\Re (z-\alpha) > 0$. With Theorem \ref{th-Zhao}, we have a completely bounded operator $A^{-\alpha} \co \L_0^p(\cal{M}) \to \L^q_0(\cal{M})$. So the conclusion is obvious by composition.

\textit{Second proof for $1 \leq p < \infty$.} By Lemma \ref{Lemma-Gibbs}, the operator $A_2$ has a spectral gap. So Proposition \ref{prop-exp} gives for some constant $\omega >0$ the estimate
$$
\norm{T_t}_{\cb,\L_0^1(\cal{M}) \to \L^\infty_0(\cal{M})} 
\lesssim \e^{-\omega t}, \quad t \geq 1.
$$
Since the von Neumann algebra $\cal{M}$ is finite, by composition with Proposition \ref{Prop-inclusion-cb}, we obtain the estimates
\begin{equation}
\label{estim-786}
\norm{T_t}_{\cb,\L_0^p(\cal{M}) \to \L^q_0(\cal{M})} 
\lesssim \e^{-\omega t}, \quad t \geq 1
\end{equation}
and $\norm{T_{t}}_{\L^2_0(\cal{M}) \to \L^2_0(\cal{M})} \ov{\eqref{magic-estimates-3}}{\lesssim} \e^{-\omega t}$ if $t \geq 1$. The last point means that the semigroup $(T_t)_{t \geq 0}$ is exponentially stable on the Hilbert space $\L^2_0(\cal{M})$. Consequently, by \cite[Corollary 3.3.6 p.~76]{Haa06} we know that
\begin{equation}
\label{A2-Haase}
A_2^{-z}
=\frac{1}{\Gamma(z)} \int_{0}^{\infty} t^{z-1} T_t \d t
\end{equation}
(strong integral) for any complex number $z$ such that $\Re z >0$. Now, if $\Re z > \frac{d}{2}(\frac{1}{p}-\frac{1}{q})$, we deduce that
\begin{align*}
\MoveEqLeft
\int_{0}^{\infty} |t^{z-1}| \norm{T_t}_{\cb,\L_0^p(\cal{M}) \to \L^q_0(\cal{M})} \d t \\
&=\int_{0}^{1} t^{\Re z-1} \norm{T_t}_{\cb,\L_0^p(\cal{M}) \to \L^q_0(\cal{M})} \d t
+\int_{1}^{\infty} t^{\Re z-1} \norm{T_t}_{\cb,\L_0^p(\cal{M}) \to \L^q_0(\cal{M})} \d t \\         
&\ov{\eqref{cbRnpq}\eqref{estim-786}}{\lesssim} \int_{0}^{1} t^{\Re z-1-\frac{d}{2}(\frac{1}{p}-\frac{1}{q})} \d t
+\int_{1}^{\infty} t^{\Re z-1} \e^{-\omega t} \d t 
<\infty.
\end{align*}
Moreover, it is not difficult to check that for any matrix $f \in \M_n(\L^\infty(\cal{M}))$ the map $\R^+ \mapsto \B(\M_n(\L^p(\cal{M})),\M_n(\L^q(\cal{M})))$, $t \mapsto (\Id_{\M_n} \ot T_t)(f)$ is continuous (continuous if the dual space $\B(\M_n(\L^p(\cal{M})),\M_n(\L^q(\cal{M})))$ is equipped with the weak* topology if $q=\infty)$. 
We obtain that the formula \eqref{A2-Haase} 
(strong integral or point weak*) defines a well-defined completely bounded operator $A^{-z} \co \L_0^p(\cal{M}) \to \L^q_0(\cal{M})$. 
\end{proof}

\begin{remark} \normalfont
Note that it is seems to the author that if the von Neumann algebra $\cal{M}$ is finite then using the boundedness of imaginary powers of the operator $A_p$ (relying on the unpublished preprint \cite{JRS} and some classical arguments, see e.g.~\cite[Corollary 2.3]{JiW17}), we can replace in Theorem \ref{th-Zhao} $A^{-\alpha}$ by $A^{-z}$ for any complex number $z$ such that $\Re z =\frac{d}{2}(\frac{1}{p}-\frac{1}{q})$. So in the case $1<p < q <\infty$ of Theorem \ref{Th-Hardy-mien}, we can take any complex number $z$ such that $\Re z \geq  \frac{d}{2}(\frac{1}{p}-\frac{1}{q})$.
\end{remark}

Now, we prove the next result. Combined with Theorem \ref{Th-Hardy-mien} applied with $p=1$, $q=\infty$ and $\alpha = z > \frac{d}{2}$ a second proof of Corollary \ref{th-spectral-coulhon}. 
This is \textit{essentially} the initial proof that the author discovered. It does not use  Proposition \ref{th-Karamata-Iochum}. 
As in \cite[p.~10]{VSCC92}, the definition in the following result of the boundedness of the operator $A^{-z}$ for $z=\frac{\alpha}{2}$ includes implicitly the convergence of the strong integral $A^{-z}=\frac{1}{\Gamma(z)} \int_{0}^{\infty} t^{z-1} T_t \d t$ as in \eqref{A2-Haase}. 

\begin{prop}
\label{prop-Dirac}
Let $\cal{M}$ be a finite von Neumann algebra equipped with a normal finite faithful trace. Consider a symmetric sub-Markovian semigroup $(T_t)_{t \geq 0}$ of operators acting on $\cal{M}$. Suppose that $A^{-\frac{\alpha}{2}} \co \L^1_0(\cal{M}) \to \L^\infty(\cal{M})$ is a well-defined completely bounded map for some $\alpha > 0$. Then for any complex number $z$ with $\Re z \geq \alpha$ the operator $|D|^{-z}$ is trace-class and we have
$$
\tr |D|^{-z} 
\lesssim \bnorm{A^{-\frac{\alpha}{2}}}_{\cb,\L^1_0(\cal{M}) \to \L^\infty_0(\cal{M})}.
$$
\end{prop}

\begin{proof}
We can extend the completely bounded operator $A^{-\frac{\alpha}{2}} \co \L^1_0(\cal{M}) \to \L^\infty(\cal{M})$ by letting $A^{-\frac{\alpha}{2}} = 0$ on the subspace $\ker A_1$. By Proposition \ref{prop-Junge-square}, we obtain a completely bounded operator $A^{-\frac{\alpha}{4}} \co \L^2(\cal{M}) \to \L^\infty(\cal{M})$. Since $\L^\infty(\cal{M}) \subset \L^2(\cal{M})$ it induces a bounded operator $A^{-\frac{\alpha}{4}} \co \L^\infty(\cal{M}) \to \L^\infty(\cal{M})$ which is clearly $*$-preserving and selfadjoint by the integral formula defining the fractional powers.

Applying Proposition \ref{prop-ine-major} to this operator, we obtain
$$
\bnorm{A_2^{-\frac{\alpha}{2}}}_{S^1_0(\L^2(\cal{M}))}
\leq \bnorm{A_2^{-\frac{\alpha}{2}}}_{S^1(\L^2(\cal{M}))}
\lesssim \bnorm{A^{-\frac{\alpha}{2}}}_{\cb,\L^1(\cal{M}) \to \L^\infty(\cal{M})}
= \bnorm{A^{-\frac{\alpha}{2}}}_{\cb,\L^1_0(\cal{M}) \to \L^\infty_0(\cal{M})}.
$$
Using \eqref{D-1}, we see that $
|D|^{-z}
=\begin{bmatrix} 
(\partial^* \partial)^{-\frac{z}{2}} & 0 \\ 
0 & \big(\partial \partial^*\big)^{-\frac{z}{2}}
\end{bmatrix}            
$. With Theorem \ref{Th-unit-eq}, we obtain that
$$
\tr |D|^{-z} 
=2\tr(\partial^* \partial)^{-\frac{z}{2}}
=2\tr A_2^{-\frac{z}{2}}
\leq 2\bnorm{ A_2^{-\frac{z}{2}}}_{S^1(\L^2_0(\cal{M}))} 
\leq 2\bnorm{ A_2^{-\frac{\alpha}{2}}}_{S^1(\L^2_0(\cal{M}))}. 
$$
where we use the ideal property of the space $S^1(\L^2_0(\cal{M}))$ and the fact that for any $t \in \mathbb{R}$ each operator $A_2^{\i t}$ is a unitary on $\L^2_0(\cal{M})$, see \cite[p.~596]{KaR97b}. The proof is complete.
\end{proof}


We finish this section with the following converse to Theorem \ref{th-Zhao} which contains a result stated in \cite[Corollary 4.31 p.~69]{Zha18} without proof. We adapt a well-known argument of Coulhon \cite[p.~490]{Cou90} (see also \cite[Theorem II.3.1 p.~14]{VSCC92} or the generalization \cite[Lemma 1.3]{CoM93}) using completely bounded norms instead norms, generalized by the way to complex numbers.

\begin{thm}
\label{Th-Hardy-converse}
Let $\cal{M}$ be a semifinite von Neumann algebra equipped with a normal semifinite faithful trace. Consider a symmetric sub-Markovian semigroup $(T_t)_{t \geq 0}$ of operators acting on $\cal{M}$.   Suppose that $1 < p < q \leq \infty$. If there exists some complex number $z \in \mathbb{C}$ with $\Re z = \frac{d}{2}(\frac{1}{p}-\frac{1}{q})$ for some $d>0$ such that $A^{-z} \co \L_0^p(\cal{M}) \to \L^q(\cal{M})$ is completely bounded then $\norm{T_t}_{\cb, \L^p_0(\cal{M}) \to \L^q_0(\cal{M})} \lesssim \frac{1}{t^{\frac{d}{2}(\frac{1}{p}-\frac{1}{q})}}$ is satisfied for any $t > 0$.
The same is true for norms instead of completely bounded norms. 
\end{thm}

\begin{proof}
Let $z \in \mathbb{C}$ such that $\Re z = \frac{d}{2}(\frac{1}{p}-\frac{1}{q})$. Suppose that the operator $A^{-z} \co \L_0^p(\cal{M}) \to \L^q_0(\cal{M})$ is completely bounded. Note that by \cite[Section 5]{JMX06}, the semigroup $(\Id_{\B(\ell^2)} \ot T_t)_{t \geq 0}$ is symmetric sub-Markovian with weak* infinitesimal generator $-(\Id \ot A)$. By \cite[Proposition 5.4]{JMX06}, the opposite $\Id \ot A_p$ of the infinitesimal generator of the strongly continuous semigroup $(T_t)_{t \geq 0}$ of contractions acting on the Banach space $\L^p(\B(\ell^2) \otvn \cal{M})$ is sectorial of type $\pi|\frac{1}{p}-\frac{1}{2}| < \frac{\pi}{2}$. Consequently by \cite[Lemma 3.1]{JMX06}, this semigroup $(\Id \ot T_t)_{t \geq 0}$ is bounded analytic on the Banach space $\L^p(\B(\ell^2) \otvn \cal{M})$ since $p > 1$. So by \cite[Proposition 3.4.3 p.~76]{Haa06}, we deduce that for any $t > 0$
\begin{align}
\MoveEqLeft
\label{eq-979789}
\bnorm{A_p^{z}T_t}_{\cb,\L^p_0(\cal{M}) \to \L^p_0(\cal{M})} 
\ov{\eqref{norm-cb-Sp}}{=} \bnorm{(\Id \ot A_p^{z})(\Id \ot T_t)}_{S^p[\L^p_0(\cal{M})] \to S^p[\L^p_0(\cal{M})]} 
\leq \frac{1}{t^{\Re z}}
= \frac{1}{t^{\frac{d}{2}(\frac{1}{p}-\frac{1}{q})}}.
\end{align} 
For any $t > 0$, note that $T_t=A^{-z}A_p^{z}T_t$. We conclude that
\begin{align*}
\MoveEqLeft
\norm{T_t}_{\cb,\L_0^p(\cal{M}) \to \L^q_0(\cal{M})}
\leq \norm{A^{-z}}_{\cb,\L_0^p(\cal{M}) \to \L^q_0(\cal{M})} \bnorm{A_p^{z}T_t}_{\cb,\L_0^p(\cal{M}) \to \L^p_0(\cal{M})} 
\ov{\eqref{eq-979789}}{\lesssim} \frac{1}{t^{\frac{d}{2}(\frac{1}{p}-\frac{1}{q})}}.
\end{align*}
Finally, we can use a similar method for the norms instead of the completely norms.
\end{proof}


\section{Discussion in various classical contexts}
\label{sec-examples}
\subsection{Some general remarks on the commutative case}

We suppose in this section that $\cal{M}$ is a commutative von Neumann algebra $\L^\infty(\Omega)$ associated to a second countable compact topological space $\Omega$ equipped with a finite Borel regular measure $\mu$ with full support and that we have a symmetric sub-Markovian semigroup $(T_t)_{t \geq 0}$ of operators acting on the Hilbert space $\L^2(\Omega)$ with infinitesimal generator $-A_2$. We  assume that each operator $T_t \co \L^2(\Omega) \to \L^2(\Omega)$ is a convolution operator by a \textit{continuous} kernel $K_t \co \Omega \times \Omega \to \R^+$. Note that the existence of this integral kernel is not automatic. This means that
$$
(T_tf)(x)
=\int_\Omega K_t(x,y)f(y) \d \mu(y), \quad f \in \L^2(\Omega), x \in \Omega, t>0.
$$
We have the symmetry property
\begin{equation}
\label{symmetry}
K_t(x,y) 
= K_t(y,x), \quad x,y \in \Omega, t>0
\end{equation}
and the semigroup property
\begin{equation}
\label{semigroup-property}
K_{s+t}(x,y) 
=\int_\Omega K_s(x, z)K_t(z, y)\d \mu (z),\quad x,y \in \Omega, s,t >0.
\end{equation}	
If $\delta >0$, we introduce the diagonal upper estimate
\begin{equation}
\label{upper}
K_t(x,x) 
\lesssim \frac{1}{t^{\delta}}, \quad x \in \Omega,\ 0 < t \leq 1
\end{equation}
and the estimate
\begin{equation}
\label{lower}
\frac{1}{t^{\delta}} 
\lesssim \int_\Omega K_t(x,x) \d\mu(x), \quad x \in \Omega,\ 0 < t \leq 1.
\end{equation}
Now, we connect these estimates to the main topics of this paper. The first part of the following result is folklore. For the sake of completeness, we give a proof.

\begin{prop}
\label{Th-kernel-spectral}
\begin{enumerate}
\item The estimate given in \eqref{upper} is equivalent to having a local Coulhon-Varapoulos dimension less than or equal to $2\delta$.
\item  Suppose that \eqref{lower} is satisfied. If the operator $D$ of \eqref{Hodge-Dirac-I} is $p$-summable for some $p>0$ then $p \geq 2\delta$.
\end{enumerate}
\end{prop}

\begin{proof}
1. For all $x,y \in \Omega$ and any $t>0$, we have by a standard computation using Cauchy-Schwarz inequality
\begin{align*}
\MoveEqLeft
K_{t}(x,y)
\ov{\eqref{semigroup-property}}{=} \int_\Omega K_{\frac{t}{2}}(x,z)K_{\frac{t}{2}}(z,y) \d\mu(z) 
\leq \left(\int_\Omega K_{\frac{t}{2}}(x,z)^2 \d\mu(z)\right)^{1/2} \left(\int_\Omega K_{\frac{t}{2}}(z,y)^2 \d\mu(z)\right)^{\frac{1}{2}} \\
&\ov{\eqref{symmetry}}{=} \left(\int_\Omega K_{\frac{t}{2}}(x,z)K_{\frac{t}{2}}(z,x) \d\mu(z)\right)^{1/2}\left(\int_\Omega K_{\frac{t}{2}}(y,z)K_{\frac{t}{2}}(z,y) \d\mu(z)\right)^{\frac{1}{2}} 
\ov{\eqref{semigroup-property}}{=} \big(K_{t}(x,x) K_{t}(y,y)\big)^{\frac{1}{2}}.
\end{align*}
By \eqref{upper}, we obtain the estimate $
\norm{K_t}_{\L^\infty(\Omega \times \Omega)}
\lesssim \frac{1}{t^\delta}$ for any $0 < t \leq 1$. By Dunford-Pettis theorem \eqref{Dunford-Pettis-1}, this is equivalent to the estimate $\norm{T_t}_{\L^1(\Omega) \to \L^\infty(\Omega)} \lesssim \frac{1}{t^\delta}$. 

2. Now, suppose that the operator $|D|^{-p}$ is trace-class. By Lemma \ref{Lemma-theta-summable}, we have 
\begin{equation}
\label{inter-24}
\tr \e^{-tD^2} 
\lesssim \frac{1}{t^{\frac{p}{2}}}, \quad t > 0.
\end{equation}
With \eqref{carre-de-D}, we infer for any $t>0$ that the operator $T_t=\e^{-tA_2}$ is trace-class and that
\begin{align*}
\MoveEqLeft
\int_{\Omega} K_t(x,x) \d \mu (x) 
=\tr T_t 
=\tr \e^{-tA_2}
\ov{\eqref{carre-de-D}\eqref{inter-24}}{\leq} \frac{1}{t^{\frac{p}{2}}}
\end{align*}  
where we use \cite[Corollary 3.2 p.~237]{Bri91} in the first equality, since the function $K_t$ is continuous. However, we have  
$$
\frac{1}{t^\delta}
\ov{\eqref{lower}}{\lesssim} \int_{\Omega} K_t(x,x) \d \mu (x), \quad 0 < t \leq 1.
$$ 
We conclude that $\frac{1}{t^\delta} \lesssim \frac{1}{t^{\frac{p}{2}}}$ for any $0< t \leq 1$ and consequently $p \geq 2\delta$.
\end{proof}

Note that is clear that the estimate \eqref{upper} implies the estimate $\tr T_t =\int_\Omega K_t(x,x) \d\mu(x) \lesssim \frac{1}{t^{\delta}}$ for any $0 < t \leq 1$. So using Proposition \ref{th-Karamata-Iochum} and a reasoning similarly to the proof of Proposition \ref{cor-trace-class}, we obtain that the spectral dimension is less than $2\delta$ if $K_t$ is \textit{continuous}.

\begin{remark} \normalfont
With \cite[Theorem 4.3]{Bri91} (see also \cite{Jef16}), we can prove a generalization of the second part of Theorem \ref{Th-kernel-spectral} replacing the continuity of the kernel $K_t$ by a weaker assumption.
\end{remark}

\begin{remark} \normalfont
The result \cite[Theorem 2.7]{AtE17} and its proof, connects the estimate \eqref{upper} with the estimate $\int_\Omega K_t(x,x) \d\mu(x) \lesssim \frac{1}{t^{\delta'}}$ for $0<t \leq 1$ if we have some control of the $\L^\infty(\Omega)$-norm of eigenfunctions of the operator $A_2$.
\end{remark}

In the sequel one will sometimes encounter the two-sided estimate
\begin{equation}
\label{sub-gaussian-intro}
K_t(x,y) 
\approx \frac{1}{t^{\frac{\alpha}{\beta}}} \exp \bigg(-c\bigg(\frac{\dist^\beta(x,y)}{t}\bigg)^{\frac{1}{\beta-1}}\bigg), \quad 0 < t \leq 1, \ x,y \in \Omega
\end{equation}
for some distance $\dist$ on $\Omega$, where $\alpha > 0$ and $\beta > 1$ are two parameters. This equivalence is called a Gaussian estimate if $\beta=2$. 


\paragraph{Dirichlet forms on classical $\L^2$-spaces} 
Now, we focus on semigroups and Dirichlet forms defined on commutative $\L^2$-spaces. We will briefly review the definitions and results that will be needed in the remainder of this paper. For further details, we refer to \cite{FOT11}, \cite{BoH91} and \cite{MaR92}. Consider a Dirichlet form $\cal{E} \co \dom \cal{E} \to [0,\infty[$ defined on a dense  subspace $\dom \cal{E}$ of $\L^2(X)$ for a second countable locally compact space $X$ equipped with a positive Borel measure $\mu$ with full support. We also denote the associated bilinear form by $\cal{E}$. A Dirichlet form $\cal{E}$ is said to be strongly local \cite[p.~6]{FOT11} if for any functions $f, g \in \dom \cal{E}$ with compact supports such that $f$ is constant on a neighbourhood of $\supp g$ we have $\cal{E}(f,g)= 0$. The form $\cal{E}$ is called regular \cite[p.~6]{FOT11} if the subspace $\dom \cal{E} \cap \mathrm{C}_c(X)$ is dense in the space $\mathrm{C}_c(X)$ endowed with the uniform norm and in $\dom \cal{E}$ equipped with the norm $\norm{f}_{\dom \cal{E}} \ov{\mathrm{def}}{=} [\norm{f}^2_{\L^2(X)}+\cal{E}(f,f)]^{1/2}$.




 
If $f$ belongs to $\dom \cal{E} \cap \mathrm{C}_c(X)$, the energy measure $\mu_{\la f\ra}$ is a positive Borel measure on the space $X$ defined in \cite[p.~123]{FOT11} by
\begin{equation}
\int_ X g \d\mu_{\la f\ra}
\ov{\mathrm{def}}{=} \cal{E}(gf,f)-\frac{1}{2}\cal{E}(f^2,g), \quad g \in \dom \cal{E} \cap \mathrm{C}_c(X).
\end{equation}
A positive Borel measure $\nu$ on the space $X$ with support $X$ is called energy dominant for $\cal{E}$ if all energy measures $\mu_{\la f\ra}$ are absolutely continuous with respect to $\nu$. 

\subsection{Laplace-Beltrami operators on compact Riemannian manifolds}
\label{Section-Beltrami}

Let $M$ be a (second countable) smooth $n$-dimensional connected compact Riemannian manifold with $n \geq 1$. We consider the Riemannian distance $\d$ on $M$ defined by 
$$
\d(x,y)
\ov{\mathrm{def}}{=} \inf \big\{ \textrm{Length}(\gamma) : \gamma \co [a,b] \to M \text{ smooth}, \gamma(a)=x,\gamma(b)=y \big\}, \quad x,y \in M
$$ 
and the Riemannian volume $\mu$ defined in \cite[Theorem 3.11 p.~59]{Gri09}. For any $x \in M$ and any $r>0$ let us denote by $B(x,r)$ the Riemannian ball of radius $r$ centered at $x$. We denote by $V(x,r) \ov{\mathrm{def}}{=} \mu(B(x,r))$ the Riemannian volume of this ball. As in \cite[p.~108]{Gri09}, we can consider the symmetric bilinear form $\cal{E} \co \W^1(M) \times \W^1(M) \to \R$ defined by
\begin{equation}
\label{Dirichlet-Riemann}
\cal{E}(f,g)
=\int_M \la \nabla f, \nabla g\ra \d\mu.
\end{equation}
It is known that it is a Dirichlet form on $\L^2(M)$.
The associated selfadjoint operator on the Hilbert space $\L^2(M)$ is the Laplace-Beltrami operator $\Delta$ with domain $\dom \Delta=\W^2(M)$ satisfying
$$
\cal{E}(f,g)
\ov{\mathrm{def}}{=} \la-\Delta f,g \ra_{\L^2(M)}, \quad f \in \W^2(M), g \in \W^1(M).
$$ 
The generated semigroup $(T_t)_{t \geq 0}$ is the heat semigroup $(\e^{t \Delta})_{t \geq 0}$ whose kernel $K_t \co M \times M \to \R$ is strictly positive and $\C^\infty$ by \cite[Theorem 5.3.1 p.~149]{Dav89} and \cite[p.~198]{Gri09} for any $t >0$. 

By \cite[Corollary 5.3.5]{Hsu02} combined with \cite[Theorem 1.1]{Gri97} (see also \cite[Corollary 15.17 p.~407]{Gri03}), we have
\begin{equation}
\label{Heat-kernel-estimates}
K_t(x,y)
\approx \frac{1}{t^{\frac{n}{2}}}\exp\bigg(-c\frac{\d(x,y)^2}{t}\bigg)  , \quad 0<t<1, x,y \in M.
\end{equation}
which is the particular case of \eqref{sub-gaussian-intro} with $\alpha=n$ and $\beta=2$. We can use Theorem \ref{Th-kernel-spectral}. Finally, recall that in this setting, we have a gradient operator $\nabla$ which is a closed operator from $\L^2(M)$ into $\L^2(M,\mathrm{T} M)$ with domain $\dom \nabla=\W^1(M)$.

Here the bimodule $\cal{H}$ identifies to $\L^2(M,\mathrm{T} M)$ where the left bimodule action is given by $(f X)(x)=f(x) X(x)$ where $f \in \L^\infty(M)$ and $X \in \L^2(M,\mathrm{T} M)$. This defines of course the right bimodule action. The Hodge-Dirac operator \eqref{Hodge-Dirac-I} identifies to the operator
$
\begin{bmatrix} 
0 & -\div \\ 
\nabla & 0 
\end{bmatrix}$ 
and acts on the Hilbert space $\L^2(M) \oplus_2 \L^2(M,\mathrm{T} M)$. Finally, recall that for any $t>0$ the operator $\e^{t \Delta}$ is trace-class and has a short time asymptotic expansion as $t \to 0$ given by
\begin{equation*}
\tr \e^{t \Delta} 
= \frac{\mathrm{Vol}(M)}{(4\pi t)^{n/2}} + \frac{t}{6(4\pi t)^{n/2}} \int_M \cal{K}(x) \d\mu (x) +o(t^{1-n/2})
\end{equation*}
where $\cal{K}(x)$ denotes the scalar curvature of $(M,g)$. In particular $\tr \e^{t \Delta}  \sim \frac{\mathrm{Vol}(M)}{(4\pi t)^{n/2}}$ when $t \to 0$. Consequently, we deduce by Proposition \ref{prop-trace-spectral triple} that the spectral dimension is equal to $n$.

For any sufficiently regular function $f$, we have $\mu_{\la f\ra}=\la \nabla f, \nabla f\ra \mu $. So the measure $\mu$ is energy dominant. Using \cite[Theorem 5.1, Remark 4.4]{HKT15} with Lemma \ref{Lemma-Gibbs}, it is not difficult to see that we have a compact spectral triple.

In conclusion, in the setting of connected \textit{compact} smooth Riemannian manifolds, the spectral dimension of the compact spectral triple $\left(\L^\infty(M),\L^2(M) \oplus_2 \L^2(M,\mathrm{T} M),\begin{bmatrix} 
0 & -\div \\ 
\nabla & 0 
\end{bmatrix}\right)$ is always equal to the local Coulhon-Varopoulos dimension of the associated heat semigroup.

%


\subsection{Sub-Laplacians on compact Lie groups}
\label{Sec-Lie-groups}

Let $G$ be a connected compact Lie group equipped with a Haar measure $\mu_G$ on $G$. We consider a finite sequence $X \ov{\mathrm{def}}{=}(X_1,\ldots,X_m)$ of left invariant smooth vector fields which generate the Lie algebra $\frak{g}$ of the group $G$ such that the vectors $X_1(e),\ldots, X_m(e)$ are linearly independent. We say that it is a family of left-invariant H\"ormander vector fields. For any $r > 0$ and any $x \in G$, we denote by $B(x,r)$ the open ball with respect to the Carnot-Carath\'eodory metric centered at $x$ and of radius $r$, and by $V(r) \ov{\mathrm{def}}{=} \mu_G(B(x,r))$ the Haar measure of any ball of radius $r$. It is well-known, e.g. \cite[p. 124]{VSCC92} that there exist $d \in \N^*$, $c,C > 0$ such that
\begin{equation}
\label{local-dim}
c r^d 
\leq V(r) 
\leq C r^d, \quad r \in ]0, 1[.
\end{equation}
The integer $d$ is called the local dimension of $(G,X)$. We consider the sub-Laplacian $\Delta$ on $G$ defined by $ 
\Delta 
\ov{\mathrm{def}}{=} -\sum_{k=1}^{m} X_k^2$. 
Let $\Delta_2 \co \dom \Delta_2 \subset \L^2(G) \to \L^2(G)$ be the smallest closed extension of the closable unbounded operator $\Delta|_{\C^\infty(G)}$ to $\L^2(G)$. We denote by $(T_t)_{t \geq 0}$ the associated symmetric Makovian semigroup of convolution operators on $\L^\infty(G)$, see \cite[pp.~20-21]{VSCC92} and \cite[p.~301]{Rob91}, with kernel $(h_t)_{t \geq 0}$. Its local Coulhon-Varopoulos dimension is equal to $d$. Indeed, on the one hand  we have by \cite[Theorem V.4.3 p.~70]{VSCC92} (see also \cite[p.~274]{Rob91}) the estimate $\norm{h_t}_{\L^\infty(G)} \lesssim \frac{1}{t^\frac{d}{2}} $ for any $0 < t \leq 1$. On the other hand, we have by \cite[(3) p.~113]{VSCC92}
$$
\frac{1}{t^\frac{d}{2}} 
\ov{\eqref{local-dim}}{\approx}  \frac{1}{V(\sqrt{t})} 
\lesssim h_t(e) 
\leq \norm{h_t}_{\L^\infty(G)}, \quad 0 < t \leq 1.
$$
We conclude with the isometry defined in \eqref{Dunford-Pettis-1}.

By \cite[Proposition 6.4, Theorem 6.6]{Arh24}, we have a compact spectral triple $(\L^\infty(G),\L^2(G) \oplus_2 \L^2(G,\ell^2_m),D)$ where $\partial$ is the closure of the gradient operator defined by $\nabla f \ov{\mathrm{def}}{=} \big(X_{1} f, \ldots, X_{m} f\big)$ for any function $f \in \mathrm{C}^\infty(G)$. By Theorem \ref{Th-kernel-spectral} combined with \eqref{inclusions-weak-Schatten}, the spectral dimension \eqref{Def-spectral-dimension} of this compact spectral triple is equal to the local dimension $d$ of $(G,X)$. Our previous result \cite[Theorem 6.9]{Arh24} only says that $D$ is $p$-summable for any $p>d$. Again, the spectral dimension of the spectral triple is always equal to the local Coulhon-Varopoulos dimension of the semigroup. 

\subsection{Sub-Laplacians on compact Riemannian manifolds}
Let $M$ be a (second countable) $n$-dimensional connected manifold and let $X=(X_1,\ldots,X_m)$ be a family of smooth vector fields. For any $s \in \N$, we can define the subspace
$$
\mathrm{Lie}_s\, X \ov{\mathrm{def}}{=}
\Span \big\{ X_{j_1},[X_{j_1}, X_{j_2}],\ldots,[X_{j_1},[X_{j_2},\ldots,[X_{j_{k-1}},X_{j_k}]\cdots] : k \leq s,1 \leq j_1,\ldots,j_k \leq m \big\}. 
$$ 
of the Lie algebra of vector fields. Assume that $X$ satisfies the uniform H\"ormander condition of step $s$, i.e.~for any $x \in M$ we have 
$$
\{Y(x): Y \in \mathrm{Lie}_s X\} 
= \mathrm{T}_xM.
$$
Now, suppose in addition that $M$ is a compact Riemannian manifold equipped with its  Riemannian volume $\mu$. Let $\Delta_X \co \dom \Delta_X \subset \L^2(M) \to \L^2(M)$ be the corresponding sub-Laplacian and consider the associated semigroup $(T_t)_{t \geq 0}$ of operators. By \cite[p.~72]{VSCC92}, the estimates \eqref{sub-gaussian-intro} are satisfied for $\beta=2$, some $\alpha>0$ and some distance $\d$ (see also \cite[p.~3439]{GJL20} for a related discussion). Hence, we can use Theorem \ref{Th-kernel-spectral} (with \eqref{inclusions-weak-Schatten}) and we obtain that the spectral dimension is still equal to the local Coulhon-Varopoulos dimension of the semigroup. The spectral dimension also seems covered by the main result of the paper \cite{InT20} by a different method. 


\subsection{$\RCD^*(K,N)$-spaces}


In \cite[p.~4]{EKS15}, the class of metric measure spaces satisfying the Riemannian curvature-dimension condition $\RCD^*(K,N)$ $K \in \R$ and $N \in [1,\infty)$ was introduced. This condition is a generalization of the <<Ricci curvature lower bound>> for the non-smooth setting and a strengthening of the curvature-dimension condition introduced by Lott and Villani \cite{LoV09} and Sturm \cite{Stu06a} \cite{Stu06b}. A $\RCD^*(K,N)$-space looks like to a Riemannian manifold in the sense that Cheeger energy and Sobolev spaces are quadratic and Hilbertian with the conditions $\textrm{Ric} \geq K$ and $\dim \leq N$. We refer the reader to the papers \cite{EKS15}, \cite{JLZ16}, \cite{AHT18}, \cite{AMS19}, \cite{AHPT21}, \cite{KuK19}, \cite{KuL21} and references therein for more information on these spaces. 


\paragraph{Cheeger energy} A triple $(X,\dist,\mu)$ is said to be a metric measure space if $(X,\dist)$ is a complete separable metric space and if $\mu$ is a Borel measure on $X$ with full support. Let $\Lip(X,\dist)$ be the set of all Lipschitz continuous functions on $X$. For $f \in \Lip(X)$, we define the local Lipschitz constant $\lip(f)(x)$ of $f$ at $x \in X$ by
\[
\lip(f)(x) 
\ov{\mathrm{def}}{=} \varlimsup_{y \to x} \frac{ | f (y) - f (x) | }{ d (x,y) }.
\]
We regard $\lip (f)$ as a function on $X$. We define the Cheeger energy $\Ch \co \L^2(X) \to [0,\infty]$ by
\begin{equation}
\label{eq:defchee}
\Ch(f)
\ov{\mathrm{def}}{=} \frac{1}{2}\inf\left\{\liminf_{n \to \infty}\int_X(\lip f_n)^2 \d \mu :\ f_n \in \Lip(X) \cap \L^2(X) \cap \L^{\infty}(X),  \mbox{$f_n \to f$ in $\L^2(X)$}
\right\}.
\end{equation}
The Sobolev space $\W^{1,2}(X,\d,\mu)$ is by definition the space of functions of $\L^2(X)$ having finite Cheeger energy, equipped with the norm $\norm{f}_{\W^{1,2}(X,\d,\mu)} \ov{\mathrm{def}}{=} \big(\norm{f}_{\L^2(X)}+2\Ch(f)\big)^{\frac{1}{2}}$. In general, $\W^{1,2}(X,\d,\mu)$ is not a Hilbert space.

\paragraph{Minimal relaxed slopes} Following \cite[Definition 4.2]{AGS13}, we say that a function $g \in \L^2(X)$ is a relaxed slope of $f\in \L^2(X)$ if there exist $\tilde{g}\in \L^2(X)$ and Lipschitz functions $f_n\in \L^2(X)$ such that:
\begin{itemize}
\item[(a)] $f_n\to f$ in $\L^2(X)$ and $|\nabla f_n|$ weakly converge to $\tilde{g}$ in $\L^2(X)$,
\item[(b)] $\tilde{g}\leq g$ $\mu$-a.e.~in $X$.
\end{itemize}
We say that $g$ is the minimal relaxed slope of $f$ if its $\L^2(X)$ norm is minimal among relaxed slopes. We refer to \cite[Theorem 7.4]{AGS13} \cite[Theorem~6.2]{AGS14b} for other equivalent definitions (e.g.~minimal generalized upper gradients). 

\paragraph{Infinitesimally Hilbertian metric measure space} A metric measure space $(X,\d,\mu)$ is said to be infinitesimally Hilbertian if the associated Cheeger energy is a quadratic form on $\L^2(X)$, that is $\Ch(f)+\Ch(g)=\frac{1}{2}(\Ch(f+g)+\Ch(f-g))$ for any functions $f,g$ of $\W^{1,2}(X,\d,\mu)$. This is equivalent to saying that $\W^{1,2}(X,\d,\mu)$ is a Hilbert space.

Moreover, for infinitesimally Hilbertian spaces, a classical argument shows the existence of a minimal relaxed slope $|\nabla f|_*$ for any function $f \in \W^{1,2}(X,\dist,\meas)$, which gives an integral representation of the Cheeger energy:
$$
\Ch(f)
=\frac{1}{2} \int_X |\nabla f|_{*}^2 \d \mu, \qquad  f \in \W^{1,2}(X,\dist,\mu).
$$
The minimal relaxed slope is a local object, meaning that $|\nabla f|_* = |\nabla g|_*$ $\mu$-a.e.~on $\{f = g\}$ for any funtions $f, g \in \W^{1,2}(X,\dist,\mu)$, and it satisfies the chain rule, namely $|\nabla (fg)|_* \le f |\nabla g|_* + g |\nabla f|_*$ $\mu$-a.e.~on $X$ for any functions $f,g \in \W^{1,2}(X,\dist,\mu)$. In addition, by \cite[Section 4.3]{AGS14a} (see also \cite{Gig15}), the formula 
$$
\Gamma(f,g)
\ov{\mathrm{def}}{=} \lim_{\epsi \to 0}\frac{|\nabla (f+\epsi g)|_*^2-|\nabla f|_*^2}{2\epsi}
$$
provides a symmetric bilinear form $\Gamma \co \W^{1,2}(X,\dist,\mu) \times \W^{1,2}(X,\dist,\mu) \to \L^1(X)$ such that $\Gamma(f,f) = |\nabla f |_*^2$ for any $f \in \W^{1,2}(X,\dist,\mu)$. 
We denote the bilinear form corresponding to $2 \Ch$ by $\cal{E}$ with $\dom \cal{E} = \W^{1,2}(X,\d,\mu)$, that is
\[
\cal{E}(f,g) 
= \int_X \Gamma(f,g) \d \mu, \quad f,g \in \W^{1,2}(X,\d,\mu).
\]
Hence $\cal{E}(f,f) = 2 \Ch(f)$ for any function $f \in \W^{1,2}(X,\d,\mu)$. We obtain a strongly local, regular and symmetric Dirichlet form. We denote by $\Delta$ the selfadjoint operator on $\L^2(X)$ associated with $(\cal{E}, \dom \cal{E})$ and by $(T_t)_{t \geq 0}$ the associated symmetric sub-Markovian semigroup, called heat semigroup. 
We refer to \cite[Section 2.3]{Tew18} for a nice summary of its properties.

\paragraph{Riemannian curvature-dimension condition} A infinitesimally Hilbertian metric measure space $(X,\d,\mu)$ is said satisfy the Riemannian curvature-dimension condition $\RCD^*(K,N)$ for some $K \in \R$ and $N \in [1,\infty)$ if it satisfies the $\CD^*(K,N)$ condition introduced by Bacher and Sturm in \cite{BaS10}. In \cite[Definition 3.9]{EKS15} and \cite[Definition 9.11]{AMS19}, it is shown that this condition is equivalent to the following three conditions:
\begin{itemize}
\item There exists $C > 0$ and $x_0 \in X$ such that
\begin{equation} 
\label{eq:i'ble}
\int_X \e^{ - C \dist(x_0,x)^2} \d\mu(x) < \infty.
\end{equation}

\item If $f$ belongs to $\W^{1,2}(X,\d,\mu)$ with $| \nabla f |_* \leq 1$ $\mu$-a.e., then $f$ has a 1-Lipschitz representative.

\item For any function $f \in \dom \Delta$ with $\Delta f \in \dom \cal{E}$ and any positive function $g$ of $\dom \Delta  \cap \L^\infty(X)$ with $\Delta g \in \L^\infty(X)$, we have
\begin{equation*}
\label{eq:Bochner}
\frac12 \int_X | \nabla f |^2 \Delta g  \d \mu- \int_X \langle \nabla f , \nabla \Delta f \rangle g  \d \mu
\geq
K \int_X | \nabla f |^2 g  \d \mu + \frac{1}{N} \int_X ( \Delta f )^2 g \d \mu.
\end{equation*}
\end{itemize}
The last condition is a weak formulation of Bochner inequality 
\begin{equation*}
\label{Bochner}
\frac12\Delta|\nabla f|^2-\langle\nabla f, \nabla\Delta f\rangle
\geq K \cdot |\nabla f|^2+\frac1N \cdot |\Delta f|^2
\end{equation*}
which is satisfied for each smooth function $f$ on a Riemannian manifold $(M,g)$ provided that $K \in \R$ is a lower bound for the Ricci curvature on $M$ and $N \in (0,\infty]$ is an upper bound for the dimension of $M$. 

\begin{remark} \normalfont
It is proved in the paper \cite[Corollary 13.7]{CaM21} that the condition $\RCD^*(K,N)$ is identical to the condition $\RCD(K,N)$ for any $K \in \R$ and any $N \in [1,\infty]$.
\end{remark}

Moreover, heat kernel estimates are known. We will use only the following estimates for $\RCD^*(0,N)$-spaces provided by \cite[Theorem 1]{JLZ16}. For any $\epsi >0$ there exists a constant $C \geq 0$ such that
\begin{equation}
\label{estimates-RCDstar}
\frac{1}{C \mu(B(x,\sqrt{t}))}\exp \left(-\frac{\dist^2(x,y)}{4(1-\epsi)t} \right) 
\leq K_t(x,y) 
\leq \frac{C}{\mu(B(x,\sqrt{t}))}\exp \left( -\frac{\dist^2(x,y)}{4(1+\epsi)t} \right)
\end{equation}
for any $x,y \in X$ and any $t>0$. It is a generalization of the famous estimates of Li and Yau \cite{LiY86} for heat kernels of complete smooth Riemannian manifolds of dimension $n \geq 2$ with positive Ricci curvature.

\begin{example} \normalfont
\label{Ex-RCD}
For $N \in (1,\infty)$, the metric measure space
\begin{equation}
\label{eq:intro exa}
(X, \dist,\mu)
\ov{\mathrm{def}}{=} \big([0, \pi], \dist_{[0, \pi]},\sin^{N-1}(t)\d t\big)
\end{equation}
is a $\RCD^*(N-1,N)$-space by \cite[Proposition 11.5]{AmS17}. In particular it is a $\RCD^*(0,N)$-space. 
\end{example}

Now, we can state the following result, which highlights a distinction between the spectral dimension and the local dimension of Coulhon-Varopoulos.

\begin{thm}
\label{th-RCD}
For any $N \in (1,\infty)$, the spectral dimension of the Dirac operator associated to the heat semigroup $(T_t)_{t \geq 0}$ of the metric measure space $(X,\dist,\mu)$ described in Example \ref{Ex-RCD} is equal to 1. The local Coulhon-Varopoulos dimension of $(T_t)_{t \geq 0}$ is bigger than $N$.
\end{thm}

\begin{proof}
By \cite[(1.6)]{AHT18}, we have the Weyl's law 
$
\lim_{\lambda \to +\infty}\frac{N(\lambda)}{\lambda^{1/2}}
=1$. 
Combinating Karamata Theorem \ref{thm-Karamata} and Proposition \ref{prop-trace-spectral triple}, we see that the spectral dimension is equal to 1. The heat kernel estimates \eqref{estimates-RCDstar} give the equivalence
\begin{equation}
\label{estim-RCD}
K_t(x,x) \approx
\frac{1}{\mu(B(x,\sqrt{t}))}, \quad t>0,x \in X
\end{equation}
 where $\d\mu=\sin^{N-1}(t)\d t$. Moreover, when $r \to 0^+$ it is easy to check that the volume of balls centered at $x=\pi$ satisfy $
\mu(B(\pi,r))
\approx r^N$. Now, suppose that the inequality \eqref{Varo-Coulhon-dim} is true for some $d>0$. By \eqref{Dunford-Pettis-1}, we have the estimate $K_t(x,x) \lesssim \frac{1}{t^{\frac{d}{2}}}$ for any $x \in [0, \pi]$ and any $0 < t < 1$. Using \eqref{estim-RCD}, we obtain
$$
\frac{1}{\mu(B(x,\sqrt{t}))} 
\lesssim K_t(x,x) 
\lesssim \frac{1}{t^{\frac{d}{2}}}, \quad x \in [0, \pi],\ 0<t<1. 
$$
So $\mu(B(x,\sqrt{t})) \gtrsim t^{\frac{d}{2}}$ and finally $\mu(B(x,r)) \gtrsim r^{d}$ for any $0  < r < 1$. Taking $x=\pi$, we obtain $r^N \gtrsim r^{d}$ if $0  < r < 1$. We conclude that $d \geq N$.
\end{proof}

\begin{remark} \normalfont
When $r \to 0^+$ it is not difficult to prove that $\mu(B(x,r)) \sim r$ for $x \in (0,\pi)$ and $\mu(B(0,r))\sim r^N$.
\end{remark}

\begin{remark} \normalfont
Recall that a metric measure space $(X,\dist,\mu)$ is Ahlfors regular, e.g. \cite[p.~62]{Hei01} if there exists a constant $\alpha>0$ such that $\mu(B(x,r)) \approx r^\alpha$ for all $x \in X$ and all $r \in (0, \diam X)$. It is apparent that the previous proof relies on the fact that the metric measure space defined in \eqref{eq:intro exa} is not <<$\alpha$-lower Ahlfors regular for small balls>> if $\alpha<N$, i.e.~we does not have an estimate $\mu(B(x,r)) \geq r^\alpha$ for any sufficiently small radius $r$ and for all $x \in X$.
\end{remark}


\section{Noncommutative examples}
\label{Sec-noncommutative-examples}

\subsection{Noncommutative tori} 
\label{NC-tori}
We refer to the book \cite{GVF01} and to the paper \cite{CXY13} for background on the noncommutative tori. Let $d \geq 2$. To each $d \times d$ real skew-symmetric matrix $\theta$, one may associate a 2-cocycle $\sigma_\theta \co \Z^d \times \Z^d \to \T$ of the group $\Z^d$ defined by $\sigma_\theta (m,n) \ov{\mathrm{def}}{=} \e^{\frac{\i}{2} \langle m, \theta n\rangle}$ where $m,n \in \Z^d$. We have $\sigma(m,-m) = \sigma(-m,m)$ for any $m \in \Z^d$.

We define the $d$-dimensional noncommutative torus $\L^\infty(\T_{\theta}^d)$ as the twisted group von Neumann algebra $\VN(\Z^d,\sigma_\theta)$. One can provide a concrete realization in the following manner. 
If $(\epsi_n)_{n \in \Z^d}$ is the canonical basis of the Hilbert space $\ell^2_{\Z^d}$ and if $m \in \Z^d$, we can consider the bounded operator $U_m \co \ell^2_{\Z^d} \to \ell^2_{\Z^d}$ defined by 
\begin{equation}
\label{def-lambdas}
U_m(\epsi_n)
\ov{\mathrm{def}}{=} \sigma_\theta(m,n) \epsi_{m+n}, \quad n \in \Z.	
\end{equation}
The $d$-dimensional noncommutative torus $\L^\infty(\T_{\theta}^d)$ is the von Neumann subalgebra of $\B(\ell^2_{\Z^d})$ generated by the $*$-algebra $
\mathcal{P}_{\theta}
\ov{\mathrm{def}}{=}\mathrm{span} \big\{ U^m \ : \ m \in \Z^d \big\}$. Recall that for any $m,n \in \Z^d$ we have
\begin{equation}
\label{product-adjoint-twisted}
U_m U_n 
= \sigma_\theta(m,n) U_{m+n}
\quad \text{and} \quad 
\big(U_m \big)^* 
= \ovl{\sigma_\theta(m,-m)} U_{-m}.	
\end{equation}
The von Neumann algebra $\L^\infty(\T_{\theta}^d)$ is finite with normalized trace given by $\tau(x) \ov{\mathrm{def}}{=}\langle\epsi_{0},x(\epsi_{0})\rangle_{\ell^2_{\Z^d}}$ where $x \in \L^\infty(\T_{\theta}^d)$. In particular, we have $\tau(U_m) = \delta_{m=0}$ for any $m \in \Z^d$.



Let $\Delta$ be the unbounded operator acting on $\L^\infty(\T_{\theta}^d)$ defined on the weak* dense subspace $\mathcal{P}_{\theta}$ by $\Delta(U_m) \ov{\mathrm{def}}{=} 4\pi^2 |m|^2U_m$ where $|m| \ov{\mathrm{def}}{=} m_1^2+\cdots+m_d^2$. Then this operator is weak* closable and its weak* closure is the opposite of a weak* generator of a symmetric Markovian semigroup $(T_t)_{t \geq 0}$ of operators acting on $\L^\infty(\T_{\theta}^d)$, called the noncommutative heat semigroup on the noncommutative torus. We will see that this semigroup satisfies \eqref{Varo-Coulhon-dim-cb} using some properties of the heat semigroup on the $d$-dimensional classical torus $\T^d$. In this setting, the gradient operator $\partial$ is a closed operator from the dense subspace $\dom \Delta_2^{\frac{1}{2}}$ of the space $\L^2(\T_{\theta}^d)$ into the Hilbert space $\ell^2_d(\L^2(\T_{\theta}^d))$ satisfying
$$
\partial(U_m)
=(2\pi\i\, m_1 U_m,\ldots,2\pi\i\, m_d U_m), \quad m \in \Z^d.
$$
Now, we compute the completely bounded local dimension Coulhon-Varopoulos dimension and the spectral dimension. The proof relies on a link with the classical heat semigroup on the classical torus $\T^d$.

\begin{thm}
\label{thm-NC}
The completely bounded local dimension Coulhon-Varopoulos dimension of the noncommutative heat semigroup $(T_t)_{t \geq 0}$ on the noncommutative torus $\T_{\theta}^d$ is $d$. The spectral dimension of $D$ is also equal to $d$.
\end{thm}

\begin{proof}
We denote by 
\begin{equation}
\label{heat-kernel-torus}
h_t(x)
\ov{\mathrm{def}}{=} \sum_{k \in \Z^d} \e^{-|k|^2 t} \e^{2\pi\i k \cdot}, \quad x \in [0,1],\, t>0
\end{equation}
the heat kernel on the $d$-dimensional classical torus $\T^d$ where the series is absolutely and uniformly convergent. 
We will use the normal injective unital $*$-homomorphism $\pi_\theta \co \L^\infty(\T^d) \to \L^\infty(\T^d) \otvn \L^\infty(\T^d)^\op$, $\e^{2\pi\i k \cdot} \mapsto U^k \ot (U^k)^*$. For any $m \in \Z^d$, we have 
\begin{align*}
\MoveEqLeft
(\Id \ot \tau)[\pi_\theta(h_t)(1 \ot U_m)]
\ov{\eqref{heat-kernel-torus}}{=} 
\sum_{k \in \Z^d} \e^{-|k|^2 t} (\Id \ot \tau)\big[\pi_\theta(\e^{2\pi\i k \cdot})(1 \ot U_m) \big] \\
&=\sum_{k \in \Z^d} \e^{-|k|^2 t} (\Id \ot \tau)\big[(U_k \ot U_k^*)(1 \ot U_m)\big] 
=\sum_{k \in \Z^d} \e^{-|k|^2 t} (\Id \ot \tau)\big(U_k \ot U_k^*U_m)\big) \\
&= \sum_{k \in \Z^d} \e^{-|k|^2 t} U_k \tau\big(U_k^*U_m\big) 
\ov{\eqref{product-adjoint-twisted}}{=} \sum_{k \in \Z^d} \ovl{\sigma_\theta(k,-k)}\e^{-|k|^2 t} U_k \tau\big(U_{-k}U_m\big) \\
&\ov{\eqref{product-adjoint-twisted}}{=} \sum_{k \in \Z^d} \ovl{\sigma_\theta(k,-k)}\sigma_\theta(-k,m)\e^{-|k|^2 t} U_k \tau\big(U_{m-k}\big)
= \ovl{\sigma_\theta(m,-m)}\sigma_\theta(-m,m)\e^{-|m|^2 t} U_m\\
&=T_t(U_m).
\end{align*}
For any $t>0$, we deduce with \eqref{Kernel-operator} that the operator $T_t$ admits the kernel $\sigma_\theta(h_t)$.

Since an injective $*$-homomorphism between $\C^*$-algebras is isometric by \cite[Corollary II.2.2.9 p.~61]{Bla06}, we obtain that 
$$
\norm{T_t}_{\cb,\L^1(\T^d_\theta) \to \L^\infty(\T^d_\theta)}
\ov{\eqref{NC-Dunford-Pettis-1}}{=} \norm{\sigma_\theta(h_t)}_{\L^\infty(\T^d_\theta) \otvn \L^\infty(\T^d_\theta)^\op}
=\norm{h_t}_{\L^\infty(\T^d)}.
$$
We conclude by using the properties of the classical heat kernel of the $d$-dimensional torus $\T^d$. Indeed, as explained in Section \ref{Sec-Lie-groups} we have 
$$
\norm{h_t}_{\L^\infty(\T^d)}
\approx \frac{1}{t^\frac{d}{2}} , \quad 0 < t \leq 1
$$
(also note the estimates of \cite{Ber76}). 


Observe that the associated Dirac operator $D=\begin{bmatrix} 
0 & \partial^* \\ 
\partial & 0 
\end{bmatrix}$ defined in \eqref{Hodge-Dirac-I} acts on the Hilbert space $\L^2(\T_{\theta}^d) \oplus_2 \ell^2_d(\L^2(\T_{\theta}^d))$. This operator is not the same that the Dirac operator $\scr{D} \ov{\mathrm{def}}{=} -\i \delta_\mu \ot \gamma^u$ of \cite[(B.6) p.~147]{EcI18} (see also \cite[Definition 12.14 p.~545]{GVF01}). However, the spectral dimension of $D$ is also equal to $d$. Indeed, by \cite[Proposition B.3 p.~149]{EcI18}, we have $\tr \e^{-t \scr{D}^2} \underset{t \to 0}{\sim} \frac{c}{t^{\frac{d}{2}}}$ for some constant $c > 0$. Moreover, by \cite[Exercise A.39]{Var06}, the operator $\scr{D}^2$ identifies to the operator $\Delta \ot \Id_{2^m}$ where $m=\lfloor \frac{d}{2}\rfloor$. Finally, it suffices to observe that 
$D^2
\ov{\eqref{carre-de-D}}{=} \begin{bmatrix} 
\partial^* \partial & 0 \\ 
0 & \partial \partial^*
\end{bmatrix}
=\begin{bmatrix} 
\Delta & 0 \\ 
0 & \partial \partial^*
\end{bmatrix}$ and to remember that the non-zero part of the spectrum of a product $T^*T$ is the same as
the non-zero part of the spectrum of $TT^*$, see Theorem \ref{Th-unit-eq}.
\end{proof}





\subsection{Group von Neumann algebras}
We will use the notations of the book \cite{ArK22}. Let $G$ be a discrete group and $(T_t)_{t \geq 0}$ be a Markovian semigroup of Fourier multipliers on the group von Neumann algebra $\VN(G)$ generated by the operators $\lambda_s \co \ell^2_G \to \ell^2_G$ where $s \in G$. These semigroups admit a nice description. Indeed, by \cite[Proposition 3.3 p.~33]{ArK22}, there exists a unique real-valued conditionally negative definite function $\psi \co G \to \R$ satisfying $\psi(e) = 0$ such that 
\begin{equation}
\label{divers-100}
T_t(\lambda_s) 
= \e^{-t \psi(s) \lambda_s}, \quad t \geq 0,\quad s \in G
\end{equation}
and there exists a \textit{real} Hilbert space $H$ together with a mapping $b_\psi \co G \to \mathbb{C}$ and a homomorphism $\pi \co G \to \mathrm{O}(H)$ such that the $1$-cocycle law holds 
$\pi_s(b_\psi(t))
=b_\psi(st)-b_\psi(s),
$ 
for any $s,t \in G$ and such that $\psi(s)=\|b_\psi(s)\|_H^2$ for any $s \in G$.

\paragraph{Rapid decay property} We need background on the rapid decay property. A length function on a group $G$ with neutral element $e$ is a function $|\cdot| \co G \to \R^+$ which satisfies the conditions $|e| = 0$, $|s| = |s^{-1}|$,  $|ss'| \leq |s| + |s'|$ for all $s,s' \in G$. If $G$ is a finitely generated discrete group generated by a finite symmetric set $S$, that is $S^{-1}=S$ and $G=\cup_{n \geq 0} S^n$ then the function $G \to \R^+$, $s \mapsto \min\{n \geq 1 : x \in S^n\}$ is a length function of $G$ called the word length (with respect to $S$).

Let $G$ be a discrete group. Recall that $G$ has rapid decay of order $r$ with respect to a length function $|\cdot|$ if we have an estimate
\begin{equation}
\label{Rapid-decay}
\norm{x}_{\L^\infty(\VN(G))}
\lesssim (n+1)^r \norm{x}_{\L^2(\VN(G))}
\end{equation}
for any element $x=\sum_{|s| \leq n} x_s \lambda_s$ of the group von Neumann algebra $\VN(G)$ of $G$. Indeed, by \cite[Lemma 1.1.4]{Jol90}, a finitely generated group $G$ with generating finite symmetric set $S$ has rapid decay property with respect to the word length as soon as it has rapid decay property for any other length. See also \cite[Theorem 2.2.12 p.~51]{Bat21} for other classical characterizations.  This property has its origin in the famous paper \cite{Haa78} where it is showed that free groups have this property with order $r=\frac{3}{2}$. The explicit definition of this property is due to Jolissaint \cite{Jol90}. We refer to the survey \cite{Cha17} and to \cite[Section 2.2]{Bat21} for more information. Now, we introduce a variant of the rapid decay property of order $s$. We say that $G$ has sphere rapid decay property (SRD) of order $r$ with respect to a length function $|\cdot|$ if we have the estimate \eqref{Rapid-decay} 
for any element $x=\sum_{|s| = n} x_s \lambda_s$ of the group von Neumann algebra $\VN(G)$ of $G$. A folklore argument shows that if $G$ has sphere rapid decay of order $r$ then $G$ has rapid decay of order $r +\frac{1}{2}$. Conversely, it is obvious that RD of order $s$ implies SRD of order $s$.  

Let $G$ be a discrete group satisfying the sphere rapid decay property of order $r$ with respect to a length function $|\cdot|$ which is a conditionally negative definite function. A particular case of \cite[Lemma 1.3.1]{JuM10} says that the associated Markovian semigroup $(T_t)_{t \geq 0}$ of Fourier multipliers on the von Neumann algebra $\VN(G)$ defined by 
\begin{equation}
\label{def-semi-3}
T_t(\lambda_s)
\ov{\mathrm{def}}{=} \e^{-t|s|} \lambda_s, \quad s \in G, t \geq 0,
\end{equation}
satisfies
\begin{equation}
\label{ine-coul-bis}
\norm{T_t}_{\L^2(\VN(G)) \to \L^\infty(\VN(G))} 
\lesssim \frac{1}{t^{\frac{2r+1}{2}}}, \quad 0 < t \leq 1.
\end{equation}

\begin{example} \normalfont
\label{Example-free}
If $n \geq 1$, consider the free group $G=\mathbb{F}^n$ with $n$ generators denoted by $c_1, \ldots, c_n$. Any element $s$ different from the identity element $e$ of this group admits a unique decomposition of the form
\begin{equation}
\label{factor}
s=
c_{i_1}^{k_1}c_{i_2}^{k_2} \cdots c_{i_\ell}^{k_\ell},
\end{equation}
where $\ell \geq 1$ is an integer, each $i_j$ belongs to $\{1,\ldots, n\}$, each $k_j$ is a non-zero integer, and  $i_j \not = i_{j+1}$ if $\ell \geq 2$ and $1 \leq j \leq \ell-1$. By definition, the (natural) word-length of $s$ is defined as
\begin{equation}
\label{word-length}
\vert s \vert
\ov{\mathrm{def}}{=} \vert k_1 \vert +\cdots + \vert k_\ell \vert.
\end{equation}
We also let $\vert e \vert \ov{\mathrm{def}}{=} 0$. This is the number of factors in the previous decomposition of the element $s$. By \cite{Haa78}, this group admits the sphere rapid decay property with order $r=1$ with respect to this length $|\cdot|$. This estimate is sharp by \cite{Nic17}. So we obtain $\norm{T_t}_{\L^2(\VN(G)) \to \L^\infty(\VN(G))} \lesssim \frac{1}{t^{\frac{3}{2}}}$ for any $0 < t \leq 1$. See also \cite[Corollary 5.2 (1) p.~3359]{You20} for a related result. 
 If $n=1$ then $G=\Z$ and the associated semigroup $(T_t)_{t \geq 0}$ identifies to the Poisson semigroup of operators acting on $\L^\infty(\mathbb{T})$. By interpolation \cite[Lemma 1]{Cou90}, the previous estimate gives $\norm{T_t}_{\L^1(\T) \to \L^\infty(\T)} \lesssim \frac{1}{t^3}$ for any $0 < t \leq 1$. 
\end{example}



\paragraph{Amenability and completely bounded local dimension} Let $G$ be a discrete group and let $(T_t)_{t \geq 0}$ be a Markovian semigroup of Fourier multipliers on the group von Neumann algebra $\VN(G)$ as in \eqref{divers-100}. If for each $0 < t \leq 1$ the operator $T_t$ induces a completely bounded operator $T_t \co \L^2(\VN(G)) \to \L^\infty(\VN(G))$ then the group $G$ is amenable. 
Here, we develop \cite[Remark 2.5]{GJP17} and we will generalize this result in Theorem \ref{thm-inj}. Our only contribution is to clarify a point (the authors unfortunately uses a sequence of integrable functions). Indeed, for any $t > 0$, using the isometry $\ell^2_G \to \CB(\L^2(\VN(G)),\L^\infty(\VN(G)))$, $\varphi \mapsto M_\varphi$ from the Hilbert space $\ell^2_G$ onto the space of completely bounded Fourier multipliers from $\L^2(\VN(G)$ into $\L^\infty(\VN(G))$ of \cite[Theorem 2.3]{GJP17} we obtain $
\norm{\e^{-t\psi}}_{\ell^2_G}
=\norm{T_t}_{\cb,\L^1(\VN(G)) \to \L^2(\VN(G))} =\norm{T_t}_{\cb,\L^2(\VN(G)) \to \L^\infty(\VN(G))}
< \infty$. The weak* continuity of the semigroup $(T_t)_{t \geq 0}$ on the von Neumann algebra $\VN(G)$ implies that for any $s \in G$ we have  $
\e^{-t\psi(s)}
\xra[t \to 0]{} 1$. 
Since each map $T_t \co \VN(G) \to \VN(G)$ is completely positive, the function $\e^{-t\psi} \co G \to \mathbb{C}$ is a (continuous) positive definite function by \cite[Proposition 5.4.9 p.~184]{KaL18}. 
Recall that by a combining \cite[Proposition 18.3.5 p.~357]{Dix77} and \cite[Proposition 18.3.6 p.~358]{Dix77} a locally compact group $G$ is amenable if and only if the function 1 is the uniform limit over every compact set of square-integrable continuous positive-definite functions\footnote{\thefootnote. We can replace ``square-integrable continuous positive-definite functions'' by ``continuous positive definite functions of compact support''.}. So we conclude that the group $G$ is amenable.

In particular, by Lemma \ref{Lemma-interpolation}, if the completely bounded local Coulhon-Varopoulos dimension of a Markovian semigroup of Fourier multipliers acting on the von Neumann algebra $\VN(G)$ is finite then the group $G$ is amenable.

\begin{example} \normalfont
\label{Ex-free-groups}
This observation applies to the noncommutative Poisson semigroup $(T_t)_{t \geq 0}$ on the free group factor $\VN(\mathbb{F}_n)$ with $n \geq 2$ defined in \cite[Section 10]{JMX06} by the word-length \eqref{word-length} and the formula \eqref{def-semi-3}. This semigroup does not admit a local estimate $\norm{T_t}_{\cb,\L^2(\cal{M}) \to \L^\infty(\cal{M})} \lesssim \frac{1}{t^{\frac{d}{2}}}$ for $0 < t \leq 1$ and some $d>0$ but admits the estimate $\norm{T_t}_{\L^2(\cal{M}) \to \L^\infty(\cal{M})} \lesssim \frac{1}{t^{\frac{3}{2}}}$ for any $0 < t \leq 1$ by Example \ref{Example-free}. Note that it is possible to give an other elementary proof of the first fact. We learned this folklore argument from Sang-Gyun Youn. One of the referees advised us to include it here. More precisely, we have the quantitative following result.
\end{example}

\begin{prop}
\label{Prop-7-4}
Let $n \geq 1$ be an integer. Consider the noncommutative Poisson semigroup $(T_t)_{t \geq 0}$ on the free group factor $\VN(\mathbb{F}_n)$. For any $0 < t \leq \frac{\log(2n-1)}{2}$, the map $T_t \co \VN(\mathbb{F}_n) \to \VN(\mathbb{F}_n)$ does not induce a completely bounded operator $T_t \co \L^1(\VN(\mathbb{F}_n)) \to \L^\infty(\VN(\mathbb{F}_n))$.
\end{prop}

\begin{proof}
Suppose that for some $t >0$ we have a completely bounded map $T_t \co \L^1(\VN(\mathbb{F}_n)) \to \L^\infty(\VN(\mathbb{F}_n))$. There exists by the identification defined in \eqref{Effros-Ruan} an element $K_{t} \in \VN(\mathbb{F}_n) \otvn \VN(\mathbb{F}_n)$ such that $T_{t}=T_{K_{t}}$ with
$$
\norm{T_t}_{\cb,\L^1(\VN(\mathbb{F}_n)) \to \L^\infty(\VN(\mathbb{F}_n))}
\ov{\eqref{NC-Dunford-Pettis-1}}{=} \norm{K_t}_{\VN(\mathbb{F}_n) \otvn \VN(\mathbb{F}_n)^\op}.
$$
Since we have an inclusion $\VN(\mathbb{F}_n) \subset \L^2(\VN(\mathbb{F}_n)$, the element $K_t$ identifies to an element $\sum_{r,v \in \mathbb{F}_n} a_{r,v} \lambda_r \ot \lambda_v$ (<<Fourier series>>) of the Hilbert space $\L^2(\VN(\mathbb{F}_n) \otvn \VN(\mathbb{F}_n)^\op)$. Consequently, for any element $s \in \mathbb{F}_n$ and any $t \geq 0$ we have
\begin{align*}
\MoveEqLeft
\e^{-t|s|} \lambda_s
\ov{\eqref{def-semi-3}}{=} T_t(\lambda_s)
\ov{\eqref{Kernel-operator}}{=} 
\sum_{r,v \in \mathbb{F}_n} a_{r,v} \lambda_r \ot \tau(\lambda_v\lambda_s) 
=\sum_{r \in \mathbb{F}_n} a_{r,s} \lambda_r.
\end{align*}
We deduce the equality $K_t=\sum_{s \in \mathbb{F}_n} \e^{-t|s|} \lambda_s \ot \lambda_{s^{-1}}$ in the Hilbert space $\L^2(\VN(\mathbb{F}_n) \otvn \VN(\mathbb{F}_n))$. 
Recall that that the number of elements of length $k$ in the free group is equal to $2n(2n-1)^{k-1}$, see e.g.~\cite[Exercise 3 p.~169]{Coh81}. Consequently, we have
\begin{align*}
\MoveEqLeft
\norm{K_{t}}_{\L^2(\VN(\mathbb{F}_n))}         
= \bigg(\sum_{s \in \mathbb{F}_n} \e^{-2t|s|}\bigg)^{\frac{1}{2}} 
=\bigg(\sum_{k=0}^{\infty} \sum_{s \in \mathbb{F}_n,|s|=k} \e^{-2tk}\bigg)^{\frac{1}{2}} \\
&=\bigg(\sum_{k=0}^{\infty} \e^{-2tk} 2n(2n-1)^{k-1}\bigg)^{\frac{1}{2}}
\geq \bigg(\sum_{k=0}^\infty \big[(2n-1)\e^{-2t}\big]^k\bigg)^{\frac{1}{2}}
\end{align*}
where we used the inequality $2n \geq 2n-1$. Now, observe that we have $(2n-1)\e^{-2t} \geq 1$ if and only if $t \leq \frac{\log(2n-1)}{2}$. So the geometric series $\sum_{k \geq 0} \big[(2n-1)\e^{-2t}\big]^k$ diverges if $t \leq \frac{\log(2n-1)}{2}$ and consequently the map $T_t$ does not induce a completely bounded operator $T_t \co \L^1(\VN(\mathbb{F}_n)) \to \L^\infty(\VN(\mathbb{F}_n))$ for any $t \leq \frac{\log(2n-1)}{2}$. 
\end{proof}



Now we give a sufficient condition for the estimate \eqref{cbRnpq}. 

\begin{prop}
Assume that the real Hilbert space $H$ is finite-dimensional with dimension $d$, that the set $\{s \in  G : \psi(s) = 0\}$ is finite and that $\inf_{b_\psi(s) \neq b_\psi(t)} \norm{b_\psi(s) - b_\psi(t)}_H^2 >0$. Then
\begin{equation}
\label{Tt-G}
\norm{T_t}_{\cb, \L^1(\VN(G)) \to \L^\infty(\VN(G))}
\lesssim \frac{1}{t^{\frac{d}{2}}}, \quad 0 < t \leq 1. 
\end{equation}
\end{prop}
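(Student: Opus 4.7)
The strategy is to identify the kernel of $T_t$ under the noncommutative Dunford--Pettis identification \eqref{NC-Dunford-Pettis-1}, then bound its norm by a direct triangle inequality that reduces everything to a Gaussian packing estimate on the image of $b_\psi$ in $H\cong\R^d$.

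Step 1: identifying the kernel. For any symbol $\phi\co G\to\mathbb{C}$, a direct check on the generators $\lambda_u$ using $\tau(\lambda_v)=\delta_{v,e}$ shows that the Fourier multiplier $T_\phi$ has kernel (in the sense of \eqref{Kernel-operator}) equal to $K_\phi=\sum_{s\in G}\phi(s)\,\lambda_s\ot\lambda_{s^{-1}}^\op$ in $\VN(G)\otvn\VN(G)^\op$, whenever the series converges. Taking $\phi(s)=\e^{-t\psi(s)}$ and using that each $\lambda_s\ot\lambda_{s^{-1}}^\op$ is unitary in the tensor product algebra, the triangle inequality gives
\begin{equation*}
\norm{K_t}_{\VN(G)\otvn\VN(G)^\op}\leq \sum_{s\in G}\e^{-t\psi(s)}.
\end{equation*}
By \eqref{NC-Dunford-Pettis-1}, this controls $\norm{T_t}_{\cb,\L^1(\VN(G))\to\L^\infty(\VN(G))}$, and it remains to bound the scalar sum by $t^{-d/2}$.

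Step 2: structure of the fibers of $b_\psi$. The cocycle relation \eqref{Cocycle-law} together with orthogonality of $\pi_{s^{-1}}$ yields $b_\psi(s^{-1}t)=\pi_{s^{-1}}(b_\psi(t)-b_\psi(s))$, hence $\norm{b_\psi(s^{-1}t)}_H=\norm{b_\psi(t)-b_\psi(s)}_H$. Consequently $N_0\ov{\mathrm{def}}{=}\{u\in G:\psi(u)=0\}=b_\psi^{-1}(0)$ is a subgroup of $G$ (finite by hypothesis), and the fibers of $b_\psi$ are its left cosets, all of size $|N_0|$. Writing $V\ov{\mathrm{def}}{=}b_\psi(G)\subset H$ and using \eqref{liens-psi-bpsi},
\begin{equation*}
\sum_{s\in G}\e^{-t\psi(s)} = |N_0|\sum_{v\in V}\e^{-t\norm{v}_H^2}.
\end{equation*}

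Step 3: Gaussian packing. The second hypothesis says $V$ is $\delta$-separated in $H\cong\R^d$ for some $\delta>0$, so the balls $B(v,\delta/2)$, $v\in V$, are pairwise disjoint. For any $x\in B(v,\delta/2)$ the elementary inequality $\norm{v}_H^2\geq \tfrac12\norm{x}_H^2-\tfrac{\delta^2}{4}$ gives $\e^{-t\norm{v}_H^2}\leq \e^{t\delta^2/4}\,\e^{-t\norm{x}_H^2/2}$. Integrating over $B(v,\delta/2)$, summing over $v\in V$, and using disjointness,
\begin{equation*}
\sum_{v\in V}\e^{-t\norm{v}_H^2}\lesssim \frac{\e^{t\delta^2/4}}{\delta^d}\int_{\R^d}\e^{-t\norm{x}_H^2/2}\d x \sim \frac{1}{t^{d/2}},\qquad 0<t\leq 1.
\end{equation*}
Combining the three steps yields \eqref{Tt-G}. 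The whole argument is short: the only substantive ingredient is the Gaussian packing estimate of Step 3, which compares the discrete Gaussian sum on $V$ to a continuous Gaussian integral via the uniform discreteness hypothesis. Without that hypothesis, or without the finite-dimensionality of $H$, the conclusion clearly fails; the identification of the kernel in Step 1 and the bookkeeping about fibers in Step 2 are essentially formal.
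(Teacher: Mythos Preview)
Your proof is correct and more self-contained than the paper's. The paper does not carry out any estimate itself: it invokes \cite[Lemma~5.8]{JMP14} as a black box to obtain $\norm{T_t}_{\cb,\L^1_0(\VN(G))\to\L^\infty(\VN(G))}\lesssim t^{-d/2}$ on the subspace $\L^1_0$ (for all $t>0$), and then treats the finite-dimensional fixed space $\Span\{\lambda_s:\psi(s)=0\}$ separately via a direct-sum argument, noting that the identity on a finite-dimensional subspace into $\L^\infty(\VN(G))$ is completely bounded with a $t$-independent constant. Your route bypasses both the external citation and the splitting: the crude triangle-inequality bound $\norm{K_t}\leq\sum_{s\in G}\e^{-t\psi(s)}$ already absorbs the fixed subspace (the $|N_0|$ terms with $\psi(s)=0$ contribute the constant $|N_0|$, which is dominated by $t^{-d/2}$ for $0<t\leq 1$), and the Gaussian packing estimate in Step~3 substitutes for whatever is inside \cite{JMP14}. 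What you give up is the all-time bound on $\L^1_0$ --- your series contains the constant term $|N_0|$, so it does not decay as $t\to\infty$ --- but the proposition only claims $0<t\leq 1$, so nothing is lost.
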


\begin{proof}
By \cite[Lemma 5.8]{JMP14}, the semigroup $(T_t)_{t \geq 0}$ satisfies the estimate 
\begin{equation}
\label{GAFA}
\norm{T_t}_{\cb, \L^1_0(\VN(G)) \to \L^\infty(\VN(G))} 
\lesssim \frac{1}{t^{\frac{d}{2}}}, \quad t>0.
\end{equation}
Note that the subspace $\Fix (T_{t})_{t \geq 0}$ defined in \eqref{def-fix} is equal to $\Span \{\lambda_s : \psi(s)=0\}$ which is finite-dimensional. Consequently, the restriction of each operator $T_t$ on the subspace $\Fix (T_{t})_{t \geq 0}$ into $\L^\infty(\VN(G))$ is completely bounded and its completely bounded norm is bounded above by a constant independent of $t$. By a direct sum argument \cite[p.~26]{BLM04}, we conclude that we have a well-defined completely bounded map $T_t \co \L^1(\VN(G)) \to \L^\infty(\VN(G))$ satisfying \eqref{Tt-G}.
\end{proof}

The assumptions for this result are satisfied for the Heat semigroup on the $d$-dimensional classical torus $\T^d$ by \cite[p.~215]{ArK22}

Finally, we give a lot of interesting examples by using one of the main result of \cite{Arh21}.

\begin{thm}
\label{th-ame}
Let $G$ be an amenable discrete group satisfying the sphere rapid decay property of order $r$ with respect to a length function $|\cdot|$ which is a conditionally negative definite function. Then the associated Markovian semigroup $(T_t)_{t \geq 0}$ satisfies 
\begin{equation}
\label{ine-coul-bla}
\norm{T_t}_{\cb,\L^1(\VN(G)) \to \L^\infty(\VN(G))} 
\lesssim \frac{1}{t^{2r+1}}, \quad 0 < t \leq 1.
\end{equation}
\end{thm}

\begin{proof}
By, \cite[Lemma 1.3.1]{JuM10}, we have the estimate \eqref{ine-coul-bis}. Using interpolation \eqref{Rnpq}, we obtain the estimate \eqref{ine-coul-bla} without the subscript $\cb$. However, the result \cite[Theorem 4.10]{Arh21} says that the completely boundedness is free. So, we get \eqref{ine-coul-bla}.
\end{proof}

This means that the completely bounded local Coulhon-Varopoulos is less than $4r+2$.

\begin{example} \normalfont
Let $G$ be a finitely generated amenable discrete group. According to \cite[Proposition B]{Jol90}, $G$ has rapid decay precisely when it has polynomial growth, a concept thoroughly explored in \cite[Chapter VI]{DLH00}.
\end{example}

\section{Completely bounded local dimension and injective von Neumann algebras}
\label{sec-dimension}

In this section, we consider an infinite-dimensional finite von Neumann algebra $\cal{M}$ equipped with a normal finite faithful trace $\tau$ and a sub-Markovian semigroup $(T_t)_{t \geq 0}$ of operators acting on $\cal{M}$ with associated Dirichlet form $\cal{E}$. Let $A_2$ be the associated positive selfadjoint operator which is the opposite of the infinitesimal generator on the Hilbert space $\L^2(\cal{M})$.


Assume that the spectrum of $A_2$ is discrete, i.e.~its points are isolated eigenvalues $\lambda_1,\lambda_2,\ldots$ of finite multiplicity, repeated in increasing order according to their multiplicities. With the eigenvalue counting function
$
N(\lambda)
\ov{\mathrm{def}}{=} \card\{k \geq 1: \lambda_k \leq \lambda\},
$ 
the spectral growth rate of $\cal{E}$ is introduced in \cite[Definition 3.9]{CiS17} by the formula
$
\Omega(\cal{E})
\ov{\mathrm{def}}{=} \limsup_{n \to \infty} N(n)^{\frac{1}{n}}$. The Dirichlet form $\cal{E}$ is said to have exponential growth if $\Omega(\cal{E})>1$ and subexponential growth if $\Omega (\cal{E})=1$.

Generalizing slightly \cite[Definition 10.8 p.~450]{GVF01}, we say that that a possibly kernel-degenerate compact spectral triple $(\cal{A},H,D)$ is $\theta$-summable if the operator $\e^{-tD^2}$ is trace-class for any $t>0$.

The following observation is in the same spirit that the result \cite[Theorem 1 p.~407]{Con94} (see also \cite{Con89}) which says that the reduced $\mathrm{C}^*$-algebra $\mathrm{C}^*_\lambda(G)$ of a \textit{non-amenable} discrete group $G$ does not admit any finitely summable K-cycle. Recall that the von Neumann algebra $\VN(G)$ of a discrete group $G$ is injective if and only if $G$ is amenable. In the next result, the selfadjoint operator $D$ is defined as in \eqref{Def-D-psi}.

\begin{thm}
\label{thm-summable}
Let $\cal{M}$ be a finite von Neumann algebra equipped with a normal finite faithful trace. Consider a symmetric sub-Markovian semigroup $(T_t)_{t \geq 0}$ of operators acting on $\cal{M}$. If the triple $(\cal{M},\L^2(\cal{M}) \oplus_2 \cal{H},D)$ is $\theta$-summable then the von Neumann algebra $\cal{M}$ is injective.
\end{thm}

\begin{proof}
The operator $\e^{-tD^2}$ is trace-class for any $t > 0$. 
The computation \eqref{carre-de-D} gives 
$$
\e^{-t D^2}
=\begin{bmatrix} 
\e^{-tA_2} & 0 \\ 
0 & \e^{-t\partial \partial^*}
\end{bmatrix}
=\begin{bmatrix} 
T_{t} & 0 \\ 
0 & \e^{-t\partial \partial^*}
\end{bmatrix}
, \quad t \geq 0. 
$$
So the semigroup $(T_t)_{t \geq 0}$ induces a Gibbs semigroup of operators on the Hilbert space $\L^2(\cal{M})$. Using the characterization \cite[Lemma 3.13]{CiS17}, we see that the operator $A_2$ has discrete spectrum and that the associated Dirichlet form $\cal{E}$ has subexponential spectral growth. In \cite[Theorem 3.15]{CiS17}, it is proved that if there exists a Dirichlet form on $\L^2(\cal{M})$ having subexponential spectral growth, then the von Neumann algebra $\cal{M}$ is amenable. The proof is complete since it is well-known \cite[Theorem IV.2.5.4 p.~366]{Bla06} that this condition implies the injectivity of $\cal{M}$.
\end{proof}

\begin{remark} \normalfont
With Lemma \ref{Lemma-theta-summable}, we observe that we can use the result if the spectral triple $(\cal{M},\L^2(\cal{M}) \oplus_2 \cal{H},D)$ is finitely summable, i.e.~if the operator $|D|^{-p}$ is trace-class for some $p >0$.
\end{remark}

\begin{thm}
\label{thm-inj}
Let $\cal{M}$ be a finite von Neumann algebra equipped with a normal finite faithful trace. Consider a symmetric sub-Markovian semigroup $(T_t)_{t \geq 0}$ of operators acting on $\cal{M}$.  Suppose that the estimate $
\norm{T_t}_{\cb,\L^1(\cal{M}) \to \L^\infty(\cal{M})} \lesssim \frac{1}{t^{\frac{d}{2}}}$ for some $d>0$ and any $0 < t \leq 1$ holds. Then the von Neumann algebra $\cal{M}$ is injective. 
\end{thm}

\begin{proof}
By Lemma \ref{Lemma-Gibbs}, the semigroup induces a Gibbs semigroup $(T_{t})_{t \geq 0}$ of operators on the Hilbert space $\L^2(\cal{M})$. The end of the proof is identical to the one of Theorem \ref{thm-summable}.
\end{proof}

So we can recover a part of Example \ref{Ex-free-groups} with this result.

\begin{example} \normalfont
We refer to \cite[Corollary 5.2 p.~3359]{You20} for examples of semigroups on free orthogonal quantum groups $\mathrm{O}_n^+$ if $n \geq 3$ and on free permutation quantum groups $\mathrm{S}_n^+$ if $n \geq 5$ satisfying the estimate \eqref{Varo-Coulhon-dim-AVN}. Note that if $n \geq 3$ the von Neumann algebra $\L^\infty(\mathrm{O}_n^+)$ is a \textit{non-injective} factor of type $\textrm{II}_1$ by \cite[Theorem 3.18]{Bra17}. So the estimate \eqref{Varo-Coulhon-dim-cb} cannot be satisfied in this case. On the other hand, the von Neumann algebra $\L^\infty(\mathrm{O}_2^+)$ is injective, again by \cite[Theorem 3.18]{Bra17} or \cite[Example 3.18 p.~324]{CiS17} for another proof.
\end{example}

\section{Future directions and open questions}
\label{sec-future}

\paragraph{Completely bounded local Coulhon-Varopoulos dimension vs spectral dimension} The local completely bounded Coulhon-Varopoulos dimension and the spectral dimension may not be equal, as shown in this study. Delving deeper into this newfound phenomenon would be enlightening. Specifically, find other examples where these dimensions are different, in other contexts than the one of $\RCD^*(K,N)$-spaces, would be probably instructive. The ideal would be to identify a necessary and sufficient condition for the equality of both quantities.
 
For example, it is not clear if the same phenomenon is true in the context of \textit{intrinsic} semigroups of operators associated to Schr\"odinger operators on $\R^n$ (or a Riemannian manifold). The operators of such a semigroup act on the Hilbert space $\L^2(\R^n, \nu)$ where $\nu$ is a probability measure. We refer for more information to \cite{RoS20}, \cite{Stu93}, \cite{Sim82} to \cite{Sim18}, \cite{Sim19} for the history of this topic and finally to \cite[Example 10.10]{Cip16} for the connection with spectral triples.

In the context of strict elliptic operators on a non-empty connected open bounded subset $\Omega$ of $\R^n$, the same question arises. 
Without seeking the utmost generality, we consider in this framework \textit{real} $\mathrm{C}^\infty$ functions $a_{ij}$ for any  $1 \leq i,j \leq n$ defined on $\Omega$ with $a_{ij}=a_{ji}$. 
We also make an assumption of strict ellipticity: there exist some constant $\delta$ and some positive continuous function $\nu > 0$ on $\Omega$ such that 
\begin{equation}
\label{}
\delta|\xi| \leq \sum_{i,j=1}^{n} a_{ij}(x)\xi_i\xi_j 
\leq \nu(x) |\xi|,\quad \xi \in \R^n,x \in \Omega
\end{equation}
where $|\xi| \ov{\mathrm{def}}{=} (\sum_{i=1}^{n} \xi_i^2)^{\frac{1}{2}}$. By \cite[p.~10]{Dav89} and \cite[p.~111]{FOT11} (see also \cite[p.~41]{MaR92}), we can consider the regular Dirichlet form $\cal{E} \co \W_{0}^1(\Omega) \times \W_{0}^1(\Omega) \to \mathbb{C}$  on the Hilbert space $\L^2(\Omega)$ defined by
\begin{equation}
\label{Dirichlet-elliptic}
\cal{E}(f,g)
=\int_\Omega \la \nabla f, A\nabla g \ra \d x 
\quad \text{ i.e.} \quad \cal{E}(f,g)
=\sum_{i,j=1}^{n} \int_{\Omega} a_{ij} \frac{\partial f}{\partial x_i} \frac{\partial g}{\partial x_j} \d x,\quad f,g \in \W_{0}^{1}(\Omega).
\end{equation}
This form is obviously strongly local. We can introduce the associated positive selfadjoint operator $A_2$ on $\L^2(\Omega)$. 
It is often denoted by $-\sum_{i,j=1}^{n} \frac{\partial }{\partial x_i} \big(a_{ij}\frac{\partial }{\partial x_j}\big)$ and determines a symmetric sub-Markovian semigroup $(T_t)_{t \geq 0}$ defined by $T_t \ov{\mathrm{def}}{=} \e^{-tA_2}$ where $t \geq 0$. It is stated in \cite[p.~82]{Dav89} that each $T_t$ is a convolution operator by a continuous kernel $K_t \co \Omega \times \Omega \to \R$. 
The result \cite[Theorem 3.2.7 p.~89]{Dav89}, gives the estimate
\begin{equation}
\label{estimate-elliptic}
0 \leq K_t(x,y) 
\lesssim \frac{1}{t^{\frac{n}{2}}} \exp\bigg(-c\frac{\d(x,y)^2}{t}\bigg),\quad t >0, x,y \in \Omega
\end{equation}
where the distance $d$ is given by 
\begin{equation}
\label{Distance-elliptic}
\d(x,y) 
\ov{\mathrm{def}}{=} \sup \bigg\{ |f(x)-f(y)|: f \in \C^\infty(\Omega), \text{ bounded with }\sum_{i,j=1}^n a_{ij}\frac{\partial f}{\partial x_i}\frac{\partial f}{\partial x_j} \leq 1\bigg\}.
\end{equation}
This distance was introduced by Davies in its classical paper \cite[p.~328]{Dav87}. See also \cite[Theorem 6.10 p.~171 and p.~194]{Ouh05} for strongly related results. This kind of estimate has a long history and we refer to we refer to \cite{Aro17} for a (too) brief history in this topic and to the books \cite{Dav89}, \cite{Ouh05}, \cite{Stro12} and finally to the surveys \cite{PoE84} and \cite[Section 5]{Dav97} and references therein for more information. 

In particular, we obtain that the local Coulhon-Varopoulos dimension is less than $n$. We does not know if this dimension and the spectral dimension of the associated Dirac operator $D$ defined in \eqref{Hodge-Dirac-I} are always equal to $n$ in this setting. 



\begin{remark} \normalfont
It is apparent that the <<Davies distance>> defined in \eqref{Distance-elliptic}  coincide with the intrinsic pseudo-metric of the associated Dirichlet form and the Connes spectral pseudo-distance.
\end{remark}

The same issue comes up for the most studied fractal set which is the Sierpi\'nski gasket $\SG$ introduced in \cite{Sie15}. Loosely speaking, this set is obtained from an equilateral triangle by removing its middle triangle and by reiterating this procedure (see Figure 1). We refer to the book \cite{Kig01}, to the papers \cite{Kig08}, \cite{Kaj12} and to the surveys \cite{Kaj21} and \cite{Kaj13} for more information.

\begin{center}
\includegraphics[scale=0.45]{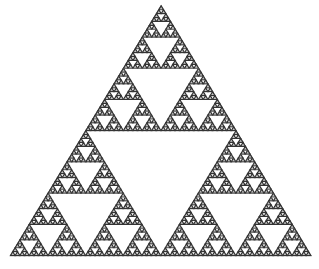}

Figure 1: the Sierpi\'nski gasket
\end{center}

It is known that the Hausdorff dimension (with respect to the euclidean distance) of $\SG$ is equal to $\alpha \ov{\mathrm{def}}{=} \log_2 3$. We let $q_1 \ov{\mathrm{def}}{=} (0,0)$, $q_2 \ov{\mathrm{def}}{=} (1,0)$ and $q_3 \ov{\mathrm{def}}{=} (\frac{1}{2},\frac{\sqrt{3}}{2})$. We suppose that $V_0 \ov{\mathrm{def}}{=} \big\{q_1,q_2,q_3 \big\}$ is the set of the three vertices of the biggest equilateral triangle of Figure 1. For any $1 \leq i \leq 3$, consider the contractions $F_i \co \R^2 \to \R^2$ defined by $F_i(x) \ov{\mathrm{def}}{=} \frac{x+q_i}{2}$ where $x \in \R^2$. For $m \in \N^*$, define the set $V_m$ inductively by $V_m \ov{\mathrm{def}}{=} F_1(V_{m-1}) \cup F_2(V_{m-1}) \cup F_3(V_{m-1})$. We have $V_m \subset V_{m+1}$ for any $m \geq 0$. The Sierpinski gasket $\SG$ is defined to be the closure  $
\SG 
\ov{\mathrm{def}}{=} \ovl{\cup_{m=0}^{\infty} V_m}$. 
Now, we describe an analogue of the Dirichlet form \eqref{Dirichlet-Riemann} by constructing a sequence of bilinear forms. For each $m \in \N$ and any function $f \co \cup_{m=0}^{\infty} V_m \to \R$, let
$$
\cal{E}_m(f) 
\ov{\mathrm{def}}{=} \sum_{x,y \in V_m, |x-y|=2^{-m}} \frac{1}{2}\bigg(\frac{5}{3}\bigg)^m\big(f(x)-f(y)\big)^2.
$$
The sequence $(\cal{E}_m(f))_{m \geq 0}$ is non-decreasing. Consequently, the limit $
\cal{E}(f) 
\ov{\mathrm{def}}{=} \lim_{m \to \infty} \cal{E}_m(f)$  
exists (possibly infinite). By polarization, we define $\cal{E}(f,g)$ for any functions $f,g \co \cup_{m=0}^{\infty} V_m \to \R$. Let  $
\F 
\ov{\mathrm{def}}{=} \{f : f \text{ is a function on } \cup_{m=0}^{\infty} V_m \text{ with } \cal{E}(f) < \infty \}$. 
According to \cite[Theorem 2.2.6]{Kig01}, every function $f \in \F$ uniquely extends to a continuous function on the Sierpinski gasket $\SG$. In other words, we have an inclusion $\F \subset \C(\SG)$. The couple $(\cal{E},\F)$ is called the standard resistance form on the Sierpi\'nski gasket.

If $\mu$ is a finite Borel measure on $\SG$ with full support then by \cite[Theorem 2.7]{Kaj13} $(\cal{E},\F)$ is a strongly local regular Dirichlet form on $\L^2(\SG,\mu)$, and its associated sub-Markovian semigroup $(T_t)_{t \geq 0}$ on the Hilbert space $\L^2(\SG,\mu)$ admits a \textit{continuous} <<heat kernel >> $K_t \co \SG \times \SG \to \R$. 

The choice of the measure $\mu$ on $\SG$ has a crucial importance. If we consider the Kusuoka measure $\nu$ defined in \cite{Kus89} on the Sierpi\'nski gasket $\SG$, we obtain a spectral triple by \cite[Example 5.1 (i)]{HKT15}. Furthermore, \cite[Theorem 7.2]{Kaj13} gives the estimate $
\int_{\SG} K_t(x,x) \d \mu (x) 
 \underset{0}{\sim} \frac{c}{t^\frac{d}{2}}$ 
for some positive constant $c$ and where $d$ is the Hausdorff dimension of $\SG$ with respect to the harmonic geodesic distance $\rho_{\cal{H}}$. So using Proposition \ref{prop-trace-spectral triple}, we obtain that the spectral dimension is equal to $d$. It is known that $d \leq 2\log_{25/3} 5$ by \cite[Theorem 6.1]{Kaj13}. Therefore, it naturally leads us to consider the following problem, implicit in \cite{Kaj13}.

\begin{prob}
To compute the exact value of $d$.
\end{prob}

Furthermore, it is established \cite[Theorem 6.3]{Kig08} (see also \cite[Corollary 5.3]{Kaj13}) that we have the following Gaussian estimates of the heat kernel
\begin{equation}
\label{gaussien-Sierpinski}
K_t(x,y)
\approx \frac{1}{V(x,\sqrt{t})} \exp\bigg(-c\frac{\rho_{\cal{H}}(x,y)^2}{t}\bigg), \quad x,y \in \SG,  0 < t \leq 1
\end{equation} 
where $\rho_{\cal{H}}$ is a distance called harmonic geodesic metric and $V(x,t)$ is the measure $\nu(B(x,t))$ of the ball $B(x,t)$ with center $x$ and radius $t$ for the distance $\rho_{\cal{H}}$. But this does not imply that we obtain \eqref{sub-gaussian-intro} with $\beta=2$ and $\alpha=d$. In this context, we introduce the following conjecture.
 The proof should be fairly straightforward.

\begin{conj}
The local Coulhon-Varopoulos dimension of the semigroup $(T_t)_{t \geq 0}$ is strictly greater than the spectral dimension $d$.
\end{conj}




\begin{remark} \normalfont
Consider the normalized version $\mu_0$ of the $\log_2 3$-dimensional Hausdorff measure (see \cite[Appendix A]{Kaj13} for more information). Note that this measure and the Kusuoka measure are singular by \cite{Kus89}. A classical result of Barlow and Perkins \cite[Theorem 1.5]{BaP88} (see also \cite[p.~2]{Gri03}) says that the kernel of the associated semigroup satisfies \eqref{sub-gaussian-intro} by taking $\alpha=\frac{\log_2 3}{2}$, i.e.~the Hausdorff dimension  of the Sierpi\'nski gasket $\SG$ divided by two, and the walk dimension $\beta=\log_2 5$. However, we do not obtain a spectral triple since this measure is not energy dominant, see \cite{Kus89} and \cite{BST99}. The problem is the boundedness of commutators $[D,f]$. 
\end{remark}

Finally, we will explore the case of the Ornstein-Uhlenbeck semigroup in another paper.

\paragraph{Examples with finite local completely bounded local Coulhon-Varopoulos dimension} 
It would be intersting to identify additional examples of semigroups acting on noncommutative spaces with a \textit{finite} completely bounded  local dimension and to determine the precise value of this dimension.

\paragraph{Completely bounded local Coulhon-Varopoulos dimension in the $\sigma$-finite case} The natural setting of noncommutative probabilities is the one of $\sigma$-finite von Neumann algebras. Let $\cal{M}$ be a $\sigma$-finite von Neumann algebra equipped with a normal faithful state $\varphi$ with density operator $D_\varphi$ and modular group $(\sigma_t^\varphi)_{t \in \R}$. In this context, we can again define notions of noncommutative $\L^p$-space, <<symmetric sub-Markovian semigroup>>, <<Markovian semigroup>> and <<Dirichlet form>>, see \cite{Cip97} and the survey \cite{Cip08}.

Consider a symmetric \textit{Markovian} semigroup $(T_{t})_{t \geq 0}$ of operators acting on $\cal{M}$. We can still construct symmetric derivations with \cite[Theorem 6.8]{Wir22} (relying crucially on the ideas of the unpublished paper \cite{JRS}) and we can still define a Hodge-Dirac operator $D$ and a notion of (completely bounded) local Coulhon-Varopoulos dimension. Using the ideas of this paper, it is not difficult to prove the following generalization of Theorem \ref{cor-trace-class}. Here $T_t \co \L^1(\cal{M}) \to \L^\infty(\cal{M})$ is defined for each $t \geq 0$ by the equality
$$
T_t(D^{\frac{1}{2}}_{\varphi} x  D^{\frac{1}{2}}_{\varphi})
=T_{t}(x), \quad x \in \L^\infty(\cal{M}).
$$
 
\begin{thm}
Let $\cal{M}$ be a $\sigma$-finite injective von Neumann algebra equipped with a normal faithful state $\varphi$. Consider a symmetric \textit{Markovian} semigroup $(T_{t})_{t \geq 0}$ of operators acting on $\cal{M}$. Suppose the estimate $
\norm{T_t}_{\cb,\L^1(\cal{M}) \to \L^\infty(\cal{M})} 
\lesssim \frac{1}{t^{\frac{d}{2}}}$ for some $d>0$ and for any $0 < t \leq 1$.  Then the operator $|D|^{-1}$ defined on $(\ker D)^\perp$ belongs to the weak Schatten space $S^{d,\infty}$. 
\end{thm}

With the notations of the paper \cite{HJX10} and $0 \leq \theta \leq 1$, we can also introduce the map $T_{t,\theta} \co \L^1(\cal{M}) \to \L^\infty(\cal{M})$ defined by
\begin{equation}
\label{Tt-theta}
T_{t,\theta}(D^{1-\theta}_{\varphi} x  D^{\theta}_{\varphi})
=T_{t}(x), \quad x \in \L^\infty(\cal{M}).
\end{equation}
It is not clear if we obtain the same notion of completely bounded local Coulhon-Varopoulos dimension. In the same spirit, we can obtain the following extension of Theorem \ref{thm-inj} with a similar proof.

\begin{thm}
\label{thm-inj-2}
Let $\cal{M}$ be a $\sigma$-finite von Neumann algebra equipped with a normal faithful state $\varphi$. Suppose that the estimate $
\norm{T_t}_{\cb,\L^1(\cal{M}) \to \L^\infty(\cal{M})} \lesssim \frac{1}{t^{\frac{d}{2}}}$ for some $d>0$ and any $0 < t \leq 1$ holds for some symmetric Markovian semigroup $(T_t)_{t \geq 0}$. Then the von Neumann algebra $\cal{M}$ is injective. 
\end{thm}

It is transparent that the derivations of \cite{JRS} and \cite{Wir22} are connected  to the notion of \textit{twisted} spectral triple in the case where the Dirichlet form has the <<$\Gamma$-regularity>>. In this situation, the derivation identifies by \cite[Theorem 7.6]{Wir22} to a \textit{twisted} derivation $\partial \co \mathfrak{A} \to \L^2(\tilde{\cal{M}})$ defined on a suitable algebra $\mathfrak{A}$ with values in a noncommutative $\L^2$-space $\L^2(\tilde{\cal{M}})$, i.e.~satisfies
\begin{equation}
\label{twisted-derivation}
\partial(xy)
=x\partial(y)+\partial(y)\sigma_{\frac{\i}{2}}^\varphi(y)
\end{equation}
for any suitable elements $x$ and $y$. This allows anyone to link this concept with a <<possibly kernel-degenerate>> extension of the notion of twisted spectral triple, introduced in \cite[Definition 3.1]{CoH08}, where the boundedness of the commutator $[D,a]$ is replaced by the one of the twisted commutators
$ [D,a]_{\sigma}
\ov{\mathrm{def}}{=} D\pi(a)-\pi(\sigma(a)) D$ for some automorphism $\sigma \co \cal{A} \to \cal{A}$.

\begin{example} \normalfont
Here we use the notations of Section \ref{NC-tori} for noncommutative tori. If $h$ is a selfadjoint element of the von Neumann algebra $\L^\infty(\T_{\theta}^2)$, then we introduce the normal faithful state $\varphi$ on $\L^\infty(\T_{\theta}^2)$ defined by $\varphi(x) \ov{\mathrm{def}}{=} \tau(x \e^{-h})$ where $x \in \L^\infty(\T_{\theta}^2)$. Using \cite[Theorem 2.11 p.~105]{Tak03} we see that the modular group $(\sigma_t^\varphi)_{t \in \R}$ is given by $\sigma_t^\varphi(x)=\e^{-\i t h}x\e^{\i t h}$ where $t \in \R$ and $x \in \L^\infty(\T_{\theta}^2)$. 

\vspace{0.2cm}

In the papers \cite{CoT11}, \cite{CoM14} and \cite{LeM19} (see also the survey \cite{FaK19}), an operator $D \ov{\mathrm{def}}{=} \begin{bmatrix}
   0  & \partial_\varphi^*  \\
   \partial_\varphi  & 0  \\
\end{bmatrix}$ is introduced, serving as the foundation for a \textit{twisted} spectral triple $(\L^\infty(\T_{\theta}^2)^\op,\L^2(\T_{\theta}^2,\varphi) \oplus \cal{H}^{(1,0)},D)$ associated with the automorphism $\L^\infty(\T_{\theta}^2)^\op \to \L^\infty(\T_{\theta}^2)^\op$, $x^\op \mapsto (\e^{-\frac{h}{2}} x\e^{\frac{h}{2}})^\op$, where we use the opposite algebra of $\L^\infty(\T_{\theta}^2)$. Indeed, these papers use instead the universal $\mathrm{C}^*$-algebra of the noncommutative two torus instead of the von Neumann algebra $\L^\infty(\T_{\theta}^2)$. Furthermore, note that 
$$
\sigma_{\frac{\i}{2}}^{\varphi^\op}(x^\op)
=\big[\sigma_{-\frac{\i}{2}}^{\varphi}(x)\big]^\op
=\big[\e^{\frac{h}{2}} x\e^{-\frac{h}{2}}\big]^\op, \quad x \in \L^\infty(\T_{\theta}^2).
$$
Actually, we believe that this example can be reformulated to align perfectly with the previously described context (possibly by enlarging $\cal{H}^{(1,0)}$). 
Consequently, we can introduce an associated semigroup $(T_t)_{t \geq 0}$. It would be interesting to solve the following problem.
\end{example}

\begin{prob}
To compute the value of the completely bounded local Coulhon-Varopoulos dimension of the semigroup $(T_t)_{t \geq 0}$ if $h \not=0$.
\end{prob}




\paragraph{Declaration of interest} None.

\paragraph{Competing interests} The author declare that he have no competing interests.

\paragraph{Acknowledgment}
The author gratefully acknowledges the support from the French National Research Agency grant ANR-18-CE40-0021 (project HASCON). I am thankful to , Adrián González-Pérez, Shouhei Honda, Naotaka Kajino, Jun Kigami, Arup Kumar Pal, El Maati Ouhabaz, Jiayin Pan, Sang-Gyun Youn, and Melchior Wirth for their valuable insights during brief discussions. My appreciation extends to Li Gao and Bogdan Nica for their feedback and corrections. I extend my sincere appreciation to Bruno Iochum for the numerous discussions we had between the inception of the first version of this paper and the release of its preprint \cite{IoZ23}. Finally, I would like to thank the referees: the first for convincing me to replace the definition of spectral dimension in my paper \cite{Arh24} with the one defined in \eqref{Def-spectral-dimension} and for providing very helpful advices; and the second for encouraging me to expand the section on completely bounded Hardy-Littlewood-Sobolev theory and to write Proposition \ref{Prop-7-4}.


\small

{\footnotesize

\vspace{0.2cm}

\noindent C\'edric Arhancet\\ 
\noindent 6 rue Didier Daurat, 81000 Albi, France\\
URL: \href{http://sites.google.com/site/cedricarhancet}{https://sites.google.com/site/cedricarhancet}\\
cedric.arhancet@protonmail.com\\

}
\end{document}